\newcommand\DN{\newcommand}
\newcommand\DR{\renewcommand}
\theoremstyle{plain}
\newtheorem{thm}{Theorem}[section]  
\newtheorem{lem}[thm]{Lemma}
\theoremstyle{definition}
\newtheorem{dfn}{Definition}[section]
\theoremstyle{remark}
\newtheorem{rem}{Remark}[section]
\DN\lref[1]{Lemma~\ref{#1}}
\DN\tref[1]{Theorem~\ref{#1}}
\DN\pref[1]{Proposition~\ref{#1}}
\DN\sref[1]{Section~\ref{#1}}
\DN\dref[1]{Definition~\ref{#1}}
\DN\rref[1]{Remark~\ref{#1}} 
\DN\correff[1]{Corollary~\ref{#1}}
\DN\eref[1]{Example~\ref{#1}}
\DN\map[3]{#1\!:\!#2\!\to\!#3}
\DN\limi[1]{\lim_{#1\to\infty}} 	
\DN\limz[1]{\lim_{#1\to 0}} 	
\DN\lsupi[1]{\limsup_{#1\to\infty}} 	
\DN\linfi[1]{\liminf_{#1\to\infty}} 	
\DN\lsupz[1]{\limsup_{#1\to 0}} 	
\DN\linfz[1]{\liminf_{#1\to 0}} 	
\DN\PD[2]{\frac{\partial#1}{\partial#2}}
\numberwithin{equation}{section}
\newcounter{Const} \setcounter{Const}{0}
\DN\Ct{\refstepcounter{Const}c_{\theConst}}
\DN\cref[1]{c_{\ref{#1}}}
\DN\As[1]{\textbf{(#1)}}
\journal{Stochastic Processes and Their Applications}
\DN\half{\frac{1}{2}}
\DN\Rd{\R ^d}
\DN\R{\mathbb{R}}\DN\N{\mathbb{N}}
\DN\Q{\mathbb{Q}}\DN{\Z}{\mathbb{Z}}
\DN\ot{\otimes} 
\DN\ts{\times }
\DN\limsupi[1]{\limsup_{#1\to\infty}} 	
\DN\liminfi[1]{\liminf_{#1\to\infty}}
\DN\mr{m_r }
\DN\iii{\mathrm{i}}
\DN\q{\rrr }
\DN\rrr{{r}}
\DN\ww{\rr } 
\DN\www{\rr (x)}
\DN\rr{{w}}
\DN\rrN{\rr }
\DN\rrNs{\rrN _{s}}
\DN\rrbar{\bar{\rr }}
\DN\ulab{\mathfrak{u} }
\DN\lab{\mathfrak{l} }
\DN\hhh{\mathfrak{h}}
\DN\g{g} 
\DN\gN{\g }
\DN\gNrs{\gN _{rs}}
\DN\gNs{\gN _{s}}
\DN\ggN{\mathfrak{g}}
\DN\ggNs{\ggN _{s}}
\DN\ggrs{\mathsf{g}_{rs}} 
\DN\ggst{\mathsf{g}_{st}} 
\DN\ggrt{\mathsf{g}_{rt}} 
\DN\ggr{\mathsf{g}_{r}} \DN\ggtilder{\ggtilde_{r}}
\DN\ggs{\mathsf{g}_{s}}\DN\ggtildes{\ggtilde_{s}}
\DN\ggtilde{\tilde{\mathsf{g}}}
\DN\ggtilders{\tilde{\mathsf{g}}_{rs}}
\DN\hh{\mathfrak{h}}
\DN\hsym{h _{\mathrm{sym}}}
\DN\Y{(\mathsf{y}=\sum_{i=1}^{\n -1}\delta_{y_i})}
\DN\uu{u}
\DN\uN{\uu ^{\n }}
\DN\vN{v^{\n }}
\DN\vNsinfty{\int_{s\le |x-y|}\vN (x,y)dy }
\DN\vNs{\int_{|x-y|< s}\vN (x,y)dy }
\DN\vvv{v}
\DN\vsinfty{\int_{s \le |x-y|}\vvv (x,y)dy }
\DN\vs{\int_{|x-y|< s}\vvv (x,y)dy }
\DN\supN{\sup_{ \n \in\N}}
\DN\xyi{|x-y_i|}
\DN\yi{|y_i|}
\DN\mubN{\mu ^{\n }_{\alpha }}
\DN\mubNx{\mu ^{\n }_{\alpha ,x}}
\DN\muz{\mu _{0}}
\DN\dxmu{dx \ts \mu }
\DN\muk{\mu ^{[k]}}
\DN\muone{\mu ^{[1]}}
\DN\muN{\mu ^{\n }}
\DN\muNx{\mu ^{\n }_{x}}
\DN\muNzero{\mu ^{\n }_{0}}
\DN\muNone{\mu ^{\n ,[1]}}
\DN\muNstar{\mu ^{\n *}}
\DN\rNone{\rho ^{\n ,1}}
\DN\rNtwo{\rho ^{\n ,2}}
\DN\rNnk{\rho ^{\n ,n+k}}
\DN\rhobar{\bar{\rho }}
\DN\rb{\rho _{\alpha }}
\DN\rbn{\rb ^n}
\DN\rbm{\rb ^m}
\DN\rbone{\rho _{\alpha }^1}
\DN\rbx{\rho _{1 }^n}
\DN\rby{\rho _{2 }^n}
\DN\rbz{\rho _{4 }^n}
\DN\rbNone{\rb ^{\n ,1}}
\DN\rbNtwo{\rb ^{\n ,2}}
\DN\rbNxone{\rho _{\alpha ,x}^{\n ,1}}
\DN\rbNxtwo{\rho _{\alpha ,x}^{\n ,2}}
\DN\rbNxn{\rho _{\alpha ,x}^{\n ,n}}
\DN\rN{\rho ^{\n ,n}}
\DN\rbN{\rb ^{\n ,n}}
\DN\rbNN{\rb ^{\n ,\n }}
\DN\rbNx{\rho _{1}^{\n ,n}}
\DN\rbNy{\rho _{2}^{\n ,n}}
\DN\rbNz{\rho _{4}^{\n ,n}}
\DN\rg{\rho _{\mathrm{bes}}}
\DN\rgx{\rho _{\mathrm{bes},x}}
\DN\rgn{\rho _{\mathrm{bes}}^n}
\DN\rgNone{\rho ^{\n ,1}_{\mathrm{bes}}}
\DN\rgNonex{\rho ^{\n ,1}_{\mathrm{bes},x}}
\DN\rbNg{\rho _{\n ,g, \alpha }^{n}}
\DN\Ka{\mathsf{K}_{\alpha }}
\DN\Kan{\Ka ^{\n }}
\DN\Kanx{\mathsf{K}_{\alpha , x}^{\n }}
\DN\Ma{M_{\alpha }^{\n }}
\DN\mub{\mu _{\alpha }}
\DN\SSS{\mathsf{S}}
\DN\SSSS{\mathbf{S}}
\DN\SSSr{\SSS _{r}}
\DN\SSSrm{\SSS _{r,m}}
\DN\SSSrNk{\SSS _{r,\n -k}}
\DN\SSSg{\SSS _{\mathrm{bes}}}
\DN\SSSSg{\SSSS _{\mathrm{bes}}}
\DN\SSSdys{\SSS _{\mathrm{be}}}
\DN\SSSSdys{\SSSS _{\mathrm{be}}}
\DN\SkS{\Sk \ts \SSS }
\DN\SSSksingle{\SSSsi ^{k}}
\DN\SSSsi{\SSS _{\mathrm{s.i.}}}
\DN\SSSone{\SSS ^{\mathbf{1}}}
\DN\SoneSSS{\SS \ts \SSS }
\DN\Akr{\mathsf{A}^{k}_{r}}
\DN\Akrr{\mathsf{A}^{k}_{r+1}}
\DN\AkrN{\mathsf{A}^{k}_{r,\n }}
\DR\SS{{S}}
\DN\Sk{\SS ^{k}}
\DN\Srk{\Sr ^{k}}
\DN\Sr{\SS _{r}}
\DN\Ss{\SS _{s}}
\DN\Sq{\SS _{\q }}
\DN\Srr{\SS _{r}^{k}}
\DN\dlog{\mathsf{d}}
\DN\dmu{\dlog ^{\mu }}
\DN\dmuN{\dlog ^{\n }}
\DN\dmuone{\dlog ^{\muone }}
\DN\bbb{\mathsf{b}}
\DN\aaa{\mathsf{a}}
\DN\ssss{(s_i,(s_j)_{j\not=i})}
\DN\sss{\mathsf{s}}
\DN\xsss{(x,\sss )}
\DN\xxxx{(x,(y_j)_{j\in\N })}
\DN\xxxxx{(x,\sum_{j\in\N }\delta _{y_j})}
\DN\CY{C^{\infty} (\Rd )\ot\dY } 
\DR\d{\mathcal{D}} 
\DN\di{\d _{\circ}}
\DN\dik{\di ^{\muk }}
\DN\dak{\d ^{\muk }}
\DN\Lm{L^{2}(\mu )}
\DN\Lmuk{L^{2}(\muk )}
\DN\Llocp{L_{\mathrm{loc}}^{p}}
\DN\Llocq{L_{\mathrm{loc}}^{q}}
\DN\Lloctwo{L_{\mathrm{loc}}^{2}}
\DN\Llocone{L_{\mathrm{loc}}^{1}}
\DN\XY{\Rd \ot \SSS }
\DN\RdT{\Rd \ts \SSS }
\DN\E{\mathcal{E}}
\DN\Ea{\E }
\DN\Eak{\E ^{\muk }}
\DN\Eaone{\E ^{\muone }}
\DN\Eazero{\E ^{\mu }}
\DN\DDD{\mathbb{D}}
\DN\DDDk{\mathbb{D}^{k}}
\DN\DDDzero{\mathbb{D}^{0}}
\DN\DDDr{\DDD _{r}}
\DN\D{\mathbf{D}}
\DN\iRT{\int_{\Rd \ts \SSS }}
\DN\iR{\int_{\Rd }}
\DN\iT{\int_{\SSS }}
\DN\Rdk{(\Rd)^{k}}
\DN\PP{\mathsf{P}}
\DN\Pxt{\PP _{(x,\sss ) }}
\DN\Pmt{\PP _{\sss }}
\DN\PPk{\PP ^{k}}
\DN\Done{\mathbb{D}^{1}}
\DN\kpath{\ulab _{\mathrm{path}}}
\DN\kkpath{\ulab _{\mathrm{k,path}}}
\DN\nN{N_{\n }}
\DN\n{\mathsf{n}} \DN\m{\mathsf{m}}
\DN\mm{\sigma } 
\DN\aaaaa{\noindent {\em Proof. }}
\DN\bbbbb{ \qed  \medskip }
\DN\PhiN{\Phi ^{\n }}
\DN\PsiN{-\beta \log |x-y|}
\DN\mmi{\mathsf{m}_{\infty}^{\n }}
\DN\infN{\inf_{\n\in\N }}
\DN\Ass[1]{\thetag{\textbf{#1}}}
\DN\ZI{[0,\infty)}
\DN\zI{(0,\infty)}
\DN\kan{\mathsf{k}_{\alpha }^{\n }}
\DN\w{\mathsf{w}_{\alpha }}
\DN\Ana{A_{\n , \alpha }}
\begin{document}

\begin{frontmatter}



\title{Infinite-dimensional stochastic differential equations 
related to Bessel random point fields }


\author{Ryuichi Honda, Hirofumi Osada  \\
{\small (Accepted in Stochastic Processes and Their Applications)} }

\address{Faculty of Mathematics, Kyushu University, 
Fukuoka 819-0395, Japan }

\begin{abstract}
We solve the infinite-dimensional stochastic differential equations (ISDEs) describing an infinite number of Brownian particles in $ \mathbb{R}^+$ interacting through the two-dimensional Coulomb potential. The equilibrium states of the associated unlabeled stochastic dynamics are Bessel random point fields. To solve these ISDEs, we calculate the logarithmic derivatives, and  prove that the random point fields are quasi-Gibbsian. 
\end{abstract}

\begin{keyword}
Interacting Brownian particles \sep Bessel random point fields \sep random matrices  \sep infinite-dimensional stochastic differential equations \sep Coulomb potentials \sep hard edge scaling limit 
\MSC 82C22 \sep  15A52 \sep  60J60 \sep  60K35 \sep  82B21
\end{keyword}
\end{frontmatter}



\section{ Introduction }\label{s:1}
The Bessel random point fields $ \mub $ ($ -1 < \alpha < \infty $) 
are probability measures on the configuration space $ \SSS $
over $ \SS = \ZI $,  whose $ n $-point correlation functions $ \rbn $ 
(see \eqref{:20c})
with respect to the Lebesgue measure are given by 
\begin{align} \label{:10a} 
& \rbn (x_1,\ldots,x_n) = \det [\Ka (x_i,x_j)]_{1 \le i,j \le n}
.\end{align}
Here, $ \Ka (x,y)$ is a continuous function called 
the Bessel kernel defined with 
the Bessel function $ J_{\alpha }$ of order $ \alpha $ such that 
for $ x\not=y $ 
\begin{align}\label{:10b}
 \Ka (x,y) & = 
 \frac{J_{\alpha } (\sqrt{x}) \sqrt{y} J_{\alpha }' (\sqrt{y}) - 
 \sqrt{x} J_{\alpha }' (\sqrt{x}) \sqrt{y} J_{\alpha }(\sqrt{y})
 }{2(x-y)}
\\ \notag & = 
\frac{\sqrt{x} J_{\alpha +1} (\sqrt{x}) J_{\alpha } (\sqrt{y}) - 
 J_{\alpha } (\sqrt{x}) \sqrt{y} J_{\alpha +1}(\sqrt{y})
 }{2(x-y)}
\end{align}
and that for $ x=y $ 
\begin{align}& \label{:10bb}
\Ka (x,x) = \frac{1}{4}\{ J_{\alpha } (\sqrt{x}) ^2 - 
J_{\alpha +1} (\sqrt{x}) J_{\alpha -1} (\sqrt{x})  \} 
.\end{align}
Note that $ 0 \le \Ka \le \text{Id}$ 
as an operator on $ L^{2} (\SS ,dx ) $. 
By definition $ \mub $ are determinantal random point fields 
with Bessel kernels $ \Ka $ (see \cite{soshi.drpf}).

It is known that these random point fields arise as a scaling limit 
at the hard left edge of the distributions $ \mub ^{\n }$ 
of the spectrum of the Laguerre ensemble. 
The random point fields $ \mub $ represent 
the thermodynamic limit of the $ \n $-particle systems $ \mub ^{\n }$, 
whose labeled densities 
$ \mm _{\alpha }^{\n }(\mathbf{x}) d\mathbf{x}$ are given by 
\begin{align}\label{:13}& 
\mm _{\alpha }^{\n} (\mathbf{x}) =
\frac{1}{\mathcal{Z} _{\alpha }^{\n }}
 e^{-\sum_{i=1}^{\n }x_i/4\n  } \prod_{j=1}^{\n }x_j^{\alpha }
\prod_{k<l}^{\n } |x_k-x_l|^{2 } 
.\end{align}
Very loosely, by taking $ \n $ to infinity, we obtain the following informal expression for the $ \mub $: 
\begin{align}\label{:14}&
\mub (d\mathbf{x}) = 
\frac{1}{\mathcal{Z} _{\alpha }^{\infty }}
 \prod_{j=1}^{\infty}x_j^{\alpha }
\prod_{k<l}^{\infty} |x_k-x_l|^{2 } 
\prod_{m=1}^{\infty} dx_m 
.\end{align}
Hence we regard the $ \mub $ as random point fields 
with free potentials $ \Phi _{\alpha } (x) = - \alpha \log x $ and interaction potential $ \Psi (x) = - 2\log |x|$. 
Unlike Ruelle's class of interaction potentials, one can not justify this 
using the Dobrushin-Lanford-Ruelle (DLR) equations. Instead, we will proceed in terms of logarithmic derivatives in \tref{l:23}.

We next turn to the stochastic dynamics associated with the $ \mub ^{\n }$. 
To prevent the particles from hitting the origin, we suppose that $ 1 \le \alpha $ (\lref{l:81}). 
Then, from Eq.\!  \eqref{:13}, it can be seen that the natural $ \n $-particle stochastic dynamics $ \mathbf{X}^{\n } = (X_t^{\n ,1},\ldots,X_t^{\n , \n})$ 
are given by the stochastic differential equations (SDEs) 
\begin{align}\label{:15}&
dX_t^{\n ,i} = dB_t^{i} + \{ - \frac{1}{8\n } + 
\frac{\alpha }{2X_t^{\n ,i} }  + \sum _{ j\not = i }^{\n }  
\frac{1}{X_t^{\n ,i} - X_t^{\n ,j}} \} dt 
\quad (1 \le i \le \n )
.\end{align}
Hence, taking $ \n $ to infinity, 
we come to the ISDEs 
\begin{align} \label{:16} 
& dX_t^i = dB_t^i + \{ 
\frac{\alpha }{2X_t^i }  + \sum _{ j\not = i }^{\infty}
\frac{1}{X_t^i - X_t^j} \} dt 
\quad (i \in \N )
.\end{align}
The purpose of this paper is to solve these ISDEs in such a way that the equilibrium states of the associated unlabeled dynamics 
$ \mathsf{X}_t = \sum_{i=1}^{\infty} \delta _{X_t^i}$ are Bessel random point fields $ \mub $. 

\bigskip

For a given free potential $ \Phi $ and interaction potential $ \Psi $, 
the interacting Brownian motions in infinite dimensions are the stochastic dynamics given by ISDEs of the form 
\begin{align}\label{:17a}&  
dX^i_t = dB^i_t + \frac{\beta }{2} \nabla \Phi (X_t^i) dt + 
\frac{\beta}{2}  \sum_{j\not=i} 
\nabla \Psi (X^i_t,X^j_t) dt \quad (i\in\N )
.\end{align}
Here, $ \{ B^i \}_{i\in\N } $ is a sequence of independent copies of $ d$-dimensional Brownian motions. 
The study of interacting Brownian motions in infinite dimensions 
 was initiated by Lang \cite{lang.1}, \cite{lang.2}, 
and continued by Shiga \cite{shiga}, Fritz \cite{Fr}, Tanemura \cite{T2}, and others. 
In these works, $ \Psi $ is assumed to be a Ruelle type potential: that is, 
$ \Psi $ is super-stable and integrable at infinity. 
In addition, $ \Psi $ is assumed to be of class $ C^3_0$ 
(\cite{lang.1,lang.2,shiga,Fr}) or to decay exponentially at infinity with a hard core (\cite{T2,T-R}). Hence, polynomial decay potentials are excluded,
 even from Ruelle's category. 

Recently, an interesting class of random point fields has appeared from random matrix theory. 
This class includes such as the sine, Airy, and Bessel random point fields in 
one-dimensional space and the Ginibre random point field in two dimensions. 
These represent the thermodynamic limits of the distributions of Gaussian random matrices. There are many other such random point fields that emerge from random matrix theory, but these examples are of particular note. The sine, Airy, and Bessel random point fields describe the universality classes called bulk, soft-edge, and hard-edge scaling limits, respectively. 
The Ginibre random point field is rotation and translation invariant, and thus is the typical example in two dimensions.

In these random point fields, the interactions always have logarithmic  potentials and therefore represent the outer side of the classical theory of interacting Brownian motions in infinite dimensions. 
In \cite{o.tp, o.isde, o.rm, o.rm2}, 
the second author (H.O.) developed the theory applicable to these examples. 
This theory asserts that the quasi-Gibbs property and the existence of 
logarithmic derivative $ \dmu $ of random poitnt fields $ \mu $ 
together with marginal assumptions such as non-collision 
and non-explosion properties of tagged particles imply the existence of (weak) solutions 
of the ISDEs in Eq. \eqref{:26r}. 
In \cite{o.isde, o.rm, o.rm2}, he also gave a sufficient condition of the quasi-Gibbs property and 
the existence of a logarithmic derivative. 
Nevertheless, veryfying this sufficient condition for 
random point fields appearing in 
random matrix theory is a difficult problem, 
and the proof depends crucially on the specific property of each model. 

In \cite{o.isde, o.rm}, H.O.\! proved these properties for 
the sine and Ginibre random point fields and solved ISDEs 
related to these random point fields in the sense of weak solutions. 
In \cite{o-t.airy}, H.O.\! and Tanemura prove the quasi-Gibbs property and 
calculate the logarithmic derivative of the Airy random point fields. 
In \cite{o-t.tail}, they develop a general theory on ISDEs that asserts the existence and 
pathwise uniqueness of  strong solutions of ISDEs under the assumptions of 
the quasi-Gibbs property, the existence of logarithmic derivative, 
and other marginal assumptions. 

The most important assumptions for the theories 
 in both \cite{o.tp,o.isde,o.rm,o.rm2} and \cite{o-t.tail} 
 are the same. 
These are the quasi-Gibbs property and the existence of a 
logarithmic derivative $ \dmu $ of the random point fields $ \mu $. 
Once these have been established, 
we can solve the ISDEs of Eq. \eqref{:26r} in the sense of weak solutions 
using \cite{o.tp, o.isde, o.rm, o.rm2} 
and in the sense of pathwise unique, strong solutions by \cite{o-t.tail}. 
Hence, ensuring that these two assumptions holds is an important issue. 

In the present paper, we prove the quasi-Gibbs property and 
calculate the logarithmic derivative for the Bessel random point field. 
Applying the general theories in \cite{o.tp, o.isde, o.rm, o.rm2, o-t.tail}, 
we then solve the ISDE for the Bessel random point field with $ \beta = 2 $, 
which describes the remaining universality class in one dimension.

\bigskip

This paper is organized as follows: In \sref{s:2}, we establish the mathematical framework and state the main results (Theorems \ref{l:21}--\ref{l:24}). 
In \sref{s:3}, we prove Theorems \ref{l:21} and \ref{l:22} using Theorems \ref{l:23} and \ref{l:24} in combination with the general theory developed in \cite{o.tp,o.isde,o.rm,o.rm2,o-t.tail}. 
In \sref{s:4}, we set forth \tref{l:41} in preparation for \sref{s:5}, 
where we calculate the logarithmic derivatives of Bessel random point fields and prove \tref{l:23}. 
In \sref{s:6}, we prove that these are quasi-Gibbssian (\tref{l:24}). 
In \sref{s:7}, we prove \lref{l:50}. 
In \sref{s:8}, we prove \lref{l:81}. 

\section{ Set up and main results}\label{s:2}
Let $ \SS = \ZI  $ and $ \Sr = \{ x\in \SS;\,  x < r \} $. 
Let 
$$ \SSS  = \{ \mathsf{s} = \sum _i \delta _{s_i}\, ;\, 
s_i \in \SS ,\, 
\mathsf{s} ( \Sr ) < \infty \text{ for all } r\in\N  \} 
,$$
where $ \delta _{a} $ stands for the delta measure at $ a $. 
We endow $ \SSS   $ with the vague topology, under which 
$ \SSS   $ is a Polish space. 
$ \SSS $ is called the configuration space over $ \SS $. 
We write $ \sss (x)=\sss (\{ x \} )$. Let 
\begin{align}\label{:20a}&
\SSSsi =
\{ \sss \in \SSS \, ;\, \, \sss (x)\le 1 \text{ for all }x \in \SS  ,\, \, 
\sss (\SS )= \infty \} 
.\end{align}
By definition, $ \SSSsi $ is the set of the configurations consisting of an infinite number of single point measures.

A symmetric locally integrable function 
$ \map{\rho ^n }{\SS ^n}{\ZI  } $ is called 
the $ n $-point correlation function of a probability measure $ \mu $ 
on $ \SSS $ w.r.t.\ the Lebesgue measure if $ \rho ^n $ satisfies 
\begin{align}\label{:20c}&
\int_{A_1^{k_1}\ts \cdots \ts A_m^{k_m}} 
\rho ^n (x_1,\ldots,x_n) dx_1\cdots dx_n 
 = \int _{\SSS } \prod _{i = 1}^{m} 
\frac{\mathsf{s} (A_i) ! }
{(\mathsf{s} (A_i) - k_i )!} d\mu
 \end{align}
for any sequence of disjoint bounded measurable subsets 
$ A_1,\ldots,A_m \subset \SS $ and a sequence of natural numbers 
$ k_1,\ldots,k_m $ satisfying $ k_1+\cdots + k_m = n $. 
When $ \mathsf{s} (A_i) - k_i  < 0$, according to our interpretation, 
${\mathsf{s} (A_i) ! }/{(\mathsf{s} (A_i) - k_i )!} = 0$ by convention. 
It is known that under a mild condition 
$ \{ \rho ^n \}_{n \in \N }$ determines the measure $ \mu $ \cite{soshi.drpf}.

Let $ \mub $ be Bessel random point fields. 
By definition $ \mub $ are probability measures on $ \SSS $ 
whose $ n $-point correlation functions $ \rbn $ are given by \eqref{:10a}.

Let $ \map{\ulab }{\SS ^{\mathbb{N}}  }{\SSS }$ such that 
$ \ulab ((s_i)) = \sum_{i=1}^{\infty} \delta _{s_i}$. We call $ \ulab $ an unlabel map. 

A (weak) solution $ (\mathbf{X},\mathbf{B})$ of an ISDE starting at $ \mathbf{s}$ 
is called a strong solution 
if $ \mathbf{X}$ is a function of Brownian motion $ \mathbf{B}$ and the starting point 
$ \mathbf{s}$. 

For a pair of Radon measures $ \mu $ and $ \nu $, we write $ \mu \prec \nu $ 
if $ \mu $ is absolutely continuous with respect to $ \nu $. 
\begin{thm}\label{l:21} 
Assume that $ 1 \le \alpha < \infty $. The following then holds. \\
\thetag{1} 
For each $ \alpha $, there exists a set $ \SSSdys $ such that 
\begin{align}\label{:21a}&
\mub (\SSSdys )= 1, \quad \SSSdys \subset \SSSsi 
,\end{align}
and that, for all $  \mathbf{s}\in \ulab ^{-1}(\SSSdys )$, 
there exists a $ \ZI  ^{\N }$-valued continuous process 
$ \mathbf{X}=(X^i)_{i\in\N }$, and $ \R ^{\N }$-valued Brownian motion 
$ \mathbf{B}=(B^i)_{i\in\N }$ satisfying 
\begin{align} \label{:21b} 
& dX_t^i = dB_t^i + \{ 
\frac{\alpha }{2X_t^i }  + \sum _{ j\not = i }^{\infty} 
\frac{1}{X_t^i - X_t^j} \} dt 
\quad (i \in \N )
,\\\label{:21c}&
\mathbf{X}_0 = \mathbf{s} 
.\end{align}
Moreover, $ \mathbf{X}$ satisfies 
\begin{align}\label{:21f}&
P (\ulab (\mathbf{X}_t) \in \SSSdys ,\ 0\le \forall t < \infty ) = 1 
.\end{align}
\thetag{2} 
For $ \mub \circ \lab ^{-1}$-a.s.\!\! $ \mathbf{s}$, 
$ (\mathbf{X},\mathbf{B})$ above 
is a strong solution of \eqref{:21b} and \eqref{:21c} such that 
\begin{align}\label{:21g}& 
\mub \circ \ulab (\mathbf{X}_t) ^{-1} \prec \mub \quad \text{ for all } t \in [0,\infty)
.\end{align}
Furthermore, 
the $ \mub $-strong uniqueness holds in the sense that 
any family of weak solutions satisfying \eqref{:21g} becomes the strong solution 
for $ \mub \circ \lab ^{-1}$-a.s.\!\! $ \mathbf{s}$, and that 
any two strong solutions 
$ (\mathbf{X}, \mathbf{B})$ and $ (\mathbf{X}',\mathbf{B})$ defined on the same Brownian motion $ \mathbf{B}$ starting at $ \mathbf{s} = \lab (\mathsf{s}) $ 
satisfying \eqref{:21g} are pathwise unique 
\begin{align}\label{:21h}&
P(\mathbf{X}_t=\mathbf{X}_t' \text{ for all }t \ ) = 1 
\end{align}
for $ \mub \circ \lab ^{-1}$-a.s.\!\! $ \mathbf{s}$. 
Here 
$ \map{\lab }{\SSS }{[0,\infty)^{\N }}$ is the label introduced 
by \rref{r:21} \thetag{2}. 
\end{thm}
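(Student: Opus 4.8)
The plan is to deduce Theorem~\ref{l:21} from the two structural results of the paper --- the computation of the logarithmic derivative (\tref{l:23}) and the quasi-Gibbs property (\tref{l:24}) --- together with the general machinery of \cite{o.tp,o.isde,o.rm,o.rm2} for weak existence and \cite{o-t.tail} for strong existence and pathwise uniqueness. So the proof is an orchestration argument: I would check, one by one, that the hypotheses of those abstract theorems are met for the Bessel random point field $\mub$ with $\beta=2$, and then invoke them.

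First I would record the inputs. By \tref{l:24}, $\mub$ is a quasi-Gibbs measure with free potential $\Phi_\alpha(x)=-\alpha\log x$ and interaction $\Psi(x)=-2\log|x|$; by \tref{l:23}, the logarithmic derivative $\dmu$ exists in $L^1_{\mathrm{loc}}(\mub)$ (in fact $L^p_{\mathrm{loc}}$ for the relevant $p$) and equals $\alpha/(2x)+\sum_{j}1/(x-y_j)$ in the appropriate limiting sense. I would also invoke \lref{l:81} (that $1\le\alpha$ guarantees tagged particles do not hit the origin) and \lref{l:50}, which I expect supplies the non-collision / non-explosion bounds for tagged particles (the ``marginal assumptions'' referred to in the introduction), so that the associated Dirichlet form $(\Ea,\d,\Lm)$ is well defined, quasi-regular, and its diffusion does not explode. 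Next, from the quasi-Gibbs property plus the local integrability of the correlation functions I would obtain the $\mub$-reversible diffusion $\mathsf{X}_t$ on $\SSS$ via the Dirichlet form, and then pass to the labeled dynamics: choosing a measurable label $\lab$ as in \rref{r:21}\thetag{2}, one lifts $\mathsf{X}_t$ to $\mathbf{X}_t=(X_t^i)$ on $\ulab^{-1}(\SSSdys)$, and the logarithmic-derivative identity of \tref{l:23} is exactly what turns the intrinsic diffusion into a weak solution of \eqref{:21b}--\eqref{:21c}; the set $\SSSdys\subset\SSSsi$ with $\mub(\SSSdys)=1$ and the invariance \eqref{:21f} come from the general construction (capacity-zero exceptional sets, together with \lref{l:50} and \lref{l:81} ruling out collisions and visits to $0$). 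This gives part~\thetag{1}.

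For part~\thetag{2} I would apply the theory of \cite{o-t.tail}. Its hypotheses are again the quasi-Gibbs property and the existence of the logarithmic derivative, plus tail-triviality of $\mub$ and the non-collision/non-explosion conditions. Tail triviality holds because $\mub$ is a determinantal point field with a projection-type kernel ($0\le\Ka\le\mathrm{Id}$), a known fact I would cite; the remaining conditions are the same ones used in \thetag{1}. Then \cite{o-t.tail} yields: the weak solution of \thetag{1} is in fact a strong solution for $\mub\circ\lab^{-1}$-a.s.\ $\mathbf{s}$; the absolute-continuity bound \eqref{:21g} $\mub\circ\ulab(\mathbf{X}_t)^{-1}\prec\mub$ holds (it is built into the construction, since $\mub$ is a stationary measure and the dynamics is $\mub$-reversible); and $\mub$-strong uniqueness \eqref{:21h} holds --- any weak solution obeying \eqref{:21g} coincides with the strong one, and two strong solutions on the same $\mathbf{B}$ from the same start agree pathwise.

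The main obstacle is not in this assembly but in verifying that the abstract hypotheses genuinely apply, and the most delicate point within the present theorem is controlling the behaviour near the origin and the consistency of the limiting procedure defining $\dmu$: one must ensure that the $L^p_{\mathrm{loc}}$-convergence of the approximating logarithmic derivatives (coming from the $\n$-particle densities \eqref{:13}) is strong enough to identify the drift in \eqref{:21b}, and that the drift singularities $\alpha/(2x)$ and $1/(x-y_j)$ are integrated by the process without the tagged particle reaching $0$ or colliding --- this is where $1\le\alpha$ (via \lref{l:81}) and \lref{l:50} do the real work. Since \tref{l:23}, \tref{l:24}, \lref{l:50}, and \lref{l:81} are proved in later sections, the proof here reduces to citing them and the general theorems and checking the interface conditions (quasi-regularity of the Dirichlet form, tail triviality, the form of the label map), which are routine given those inputs.
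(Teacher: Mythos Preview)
Your overall strategy is exactly the paper's: both parts are proved by verifying the hypotheses of the abstract theorems in \cite{o.tp,o.isde,o.rm,o.rm2} (for \thetag{1}) and \cite[Theorem 9.3]{o-t.tail} (for \thetag{2}), with \tref{l:23} and \tref{l:24} supplying the two substantive inputs. However, there is a concrete misattribution. \lref{l:50} is \emph{not} a non-collision or non-explosion statement; it is the bound $\rbNone(x)\le c/\sqrt{x}$ on the one-point correlation of the $\n$-particle approximation, and its role is entirely internal to the proof of \tref{l:23} (it is the key estimate behind \lref{l:52}, which verifies \eqref{:40f}). The non-collision condition \Ass{A.4} comes instead from \cite[Theorem~2.1]{o.col}, using that the Bessel kernel $\Ka$ is locally Lipschitz; the non-explosion condition \Ass{A.5} is read off directly from the boundedness of $\Ka(x,x)$ on $[1,\infty)$. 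These checks are packaged in the paper as \lref{l:33}. \lref{l:81} is invoked separately, as you say, to keep particles off the origin.

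For \thetag{2} the paper is more explicit than your sketch. Beyond quasi-Gibbs, the logarithmic derivative, and tail triviality (which the paper cites from \cite{o-o.tt} rather than deducing from $0\le\Ka\le\mathrm{Id}$), \cite[Theorem~9.3]{o-t.tail} requires growth conditions \As{A8'} and \As{E1} on particle counts in $\Sr$ (verified from \eqref{:10bb} with the explicit choice $a_{\mathsf{k}}(r)=\mathsf{k}\sqrt{r}$) and coefficient conditions \As{F1}, \As{F2}, which are checked by computing $\partial_x\dlog^{\mub}(x,\mathsf{s})=-2\alpha/x^2-\sum_i 4/(x-s_i)^2$. These are routine, but they should be named rather than absorbed into ``interface conditions''. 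A minor slip: in \tref{l:23} the logarithmic derivative is $\alpha/x+\sum_i 2/(x-y_i)$; the factor $\tfrac12$ appears only when passing to the SDE via \eqref{:26r}.
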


\begin{rem}\label{r:21} \thetag{1}
When $ -1 < \alpha < 1 $, the left most particle hits the origin. Hence a coefficient coming from the boundary condition will appear in the ISDEs. 
Since we suppose $ 1 \le \alpha $, particles never hit the origin (see \lref{l:81}). 
It would be an interesting problem to study the case $ -1 < \alpha < 1$ 
where the boundary condition would appear. 
\\
\thetag{2}
The correspondence $ \mathsf{s}\mapsto \mathbf{s}=(s_i)$ is called a label. 
In case of Bessel random point fields, there exists a natural label such that $ s_{i} < s_{i+1}$ for all $ i \in \mathbb{N}$. 
As we see later, all the particles $ X_t^i$ never collide each other for all $ t \in [0,\infty)$. Hence the initial label is kept forever. 
In particular, if we take $ X_0^i< X_0^{i+1}$ for all $ i \in \mathbb{N}$ initially, then $ X_t^i< X_t^{i+1}$ for all $ i \in \mathbb{N}$ and 
$ t \in [0,\infty)$. We denote this label by $ \lab $ in \thetag{2} of \tref{l:21}. 
\\\thetag{3} 
After submitting the first version of the manuscript, a general theory 
on the existence and uniqueness of 
strong solutions of infinite-dimensional stochastic differential equations 
has been developed and completed (see \cite{o-t.tail}). 
When the first version was submitted, 
no preprint of the general theory was available. 
It is now clarified and confirmed that 
the main assumptions required in \cite{o-t.tail} 
follow from the results of the present paper 
(Theorems \ref{l:23} and \ref{l:24}). 
We noticed that the existence and uniqueness of strong solutions 
is obtained immediately by combining the general theory 
in \cite{o-t.tail} and the results in Theorems \ref{l:23} and \ref{l:24}. 
Hence, in the revised version, we newly add \thetag{2} of \tref{l:21}. 
The original \tref{l:21} is renumbered as \tref{l:21} \thetag{1}.  
\end{rem}

A diffusion with state space $ S_0$ is a family of continuous stochastic processes with the strong Markov property starting at each point of the state space $ S_0$. 
In general, the notion of the Markov property depends on the filtering. 
We always consider the natural filtering in the present paper \cite{fot2}. 

\begin{thm} \label{l:22} 
Assume that $ 1 \le \alpha < \infty $. 
Let $ \SSSSdys = \ulab ^{-1}(\SSSdys )$. 
Let $ \mathbf{P}_{\mathbf{s}}$ be the distribution 
of $ \mathbf{X}$ given by \tref{l:21}. Then 
$\{\mathbf{P}_{\mathbf{s}} \}_{\mathbf{s}\in\SSSSdys }$ 
is a diffusion with state space $ \SSSSdys $.
\end{thm}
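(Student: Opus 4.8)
The plan is to realize the family $\{\mathbf{P}_{\mathbf{s}}\}$ as the labeled dynamics associated with a well-studied unlabeled Dirichlet form, and then transfer the strong Markov and continuity properties from the unlabeled side to the labeled side via the non-collision property recorded in \tref{l:21}. Concretely: \tref{l:23} supplies the logarithmic derivative and \tref{l:24} the quasi-Gibbs property, so by the general theory of \cite{o.tp,o.isde,o.rm,o.rm2} the unlabeled process $\mathsf{X}_t = \sum_i \delta_{X_t^i}$ is the diffusion (in the sense defined just above the statement) associated with the quasi-regular Dirichlet form $(\Ea,\d,\Lm)$ on $\SSS$, with state space carried by $\SSSdys$. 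This gives a strong Markov family $\{\PP_{\sss}\}_{\sss\in\SSSdys}$ of $\SSS$-valued continuous processes. The task is to push this up through the label map $\lab$.

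First I would fix the natural label $\lab$ of \rref{r:21}\thetag{2}, which orders points increasingly, and note that on $\SSSdys$ it is a Borel bijection onto its image $\SSSSdys = \ulab^{-1}(\SSSdys)$ with Borel inverse $\ulab$ (the ordering makes the section measurable). Then I would define $\mathbf{X}_t := \lab(\mathsf{X}_t)$ under $\PP_{\sss}$ and check that this coincides, in law, with the solution $\mathbf{X}$ of \tref{l:21} started at $\mathbf{s} = \lab(\sss)$: this is exactly the content of how \tref{l:21} is proved in \sref{s:3}, namely that the labeled ISDE solution is obtained by labeling the unlabeled diffusion, so $\mathbf{P}_{\mathbf{s}} = \PP_{\lab^{-1}(\mathbf{s})} \circ \lab^{-1}$ on path space. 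Continuity of $t\mapsto \mathbf{X}_t$ in $[0,\infty)^{\N}$ (product topology) follows because each coordinate $X_t^i$ is continuous and, by \eqref{:21f}, $\ulab(\mathbf{X}_t)\in\SSSdys\subset\SSSsi$ for all $t$, so no relabeling ever occurs: the increasing order is preserved for all time, hence $\mathbf{X}_t = \lab(\ulab(\mathbf{X}_t))$ identically and $\lab$ need not be applied path-by-path.

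The strong Markov property on $\SSSSdys$ is then inherited: for a stopping time $\tau$ relative to the natural filtration of $\mathbf{X}$, this filtration coincides with that of $\mathsf{X} = \ulab(\mathbf{X})$ because $\lab$ and $\ulab$ are mutually inverse Borel maps on $\SSSdys$ resp.\ $\SSSSdys$, so $\tau$ is also a stopping time for $\mathsf{X}$; applying the strong Markov property of $\{\PP_{\sss}\}$ and conjugating by $\lab$ gives the strong Markov property of $\{\mathbf{P}_{\mathbf{s}}\}$ with state space $\SSSSdys$. Here one uses \eqref{:21f} again to guarantee that the process stays in $\SSSSdys$ for all $t$, so the state space is genuinely $\SSSSdys$ and the restarted process is again of the same family.

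The main obstacle is the measurability and continuity bookkeeping around the label map: $\lab$ is only defined and well-behaved on the invariant set $\SSSdys\subset\SSSsi$ (where configurations consist of distinct points admitting a monotone enumeration), and one must be careful that (i) the labeling is a genuine Borel isomorphism $\SSSdys\to\SSSSdys$ — which rests on the fact that a locally finite subset of $\ZI$ with all points distinct has a canonical increasing enumeration, measurable in $\sss$ — and (ii) the dynamics never leaves this good set, which is precisely \eqref{:21f}/\eqref{:21a}. Once these are in hand the transfer of the strong Markov and continuity properties is formal. I would also remark that the identification $\mathbf{P}_{\mathbf{s}} = \PP_{\ulab(\mathbf{s})}\circ\lab^{-1}$ is independent of the choice of weak solution in \tref{l:21}\thetag{1} because, by \thetag{2} of that theorem, solutions satisfying \eqref{:21g} are $\mub\circ\lab^{-1}$-a.s.\ pathwise unique; for the remaining $\mathbf{s}$ one uses that the unlabeled diffusion is canonically defined for every $\sss\in\SSSdys$ and takes $\mathbf{P}_{\mathbf{s}}$ to be its label.
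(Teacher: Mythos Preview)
Your proposal is correct and uses the same ingredients as the paper: Theorems \ref{l:23} and \ref{l:24} (together with the easy checks of \Ass{A.1}, \Ass{A.4}, \Ass{A.5}) feed into the general machinery of \cite{o.tp,o.isde,o.rm,o.rm2}, and the labeled diffusion is then read off from the unlabeled one via the non-collision property. The only difference is one of packaging: the paper does not carry out the labeling argument by hand but instead verifies the abstract hypotheses \Ass{A.1}--\Ass{A.5} and then invokes \cite[Theorem 27]{o.isde} (quoted here as \tref{l:32}) as a black box, so that the measurability and strong-Markov bookkeeping around the label map $\lab$ that you spell out is absorbed into that citation.
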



We deduce \tref{l:21} and \tref{l:22} 
from a general theory developed in \cite{o.tp,o.isde,o.rm,o.rm2,o-t.tail}. 
The key point for this is to calculate {\em the logarithmic derivative} 
of the measure $ \mub $ 
and to prove {\em the quasi-Gibbs property} of $ \mub $. 
These two notions play an important role in the proof of 
\tref{l:21} and \tref{l:22}. 

The logarithmic derivative of $ \mub $ will be calculated in \tref{l:21}. 
We will use \tref{l:41} to prove \tref{l:21} in \sref{s:5}. 
The quasi-Gibbs property of $ \mub $ will be proved in \tref{l:24}.  
\tref{l:24} will be proved in \sref{s:6}.

To introduce the notion of the logarithmic derivative of random point fields 
we recall the definitions of reduced Palm measures and Campbell measures. 

Let $ \mu $ be a probability measure on $ (\SSS , \mathcal{B}(\SSS ))$.  
A probability measure $ \mu _{\mathbf{x}} $ is called 
the reduced Palm measure conditioned at 
$ \mathbf{x} =(x_1,\ldots,x_k)  \in \Sk $ if $ \mu _{\mathbf{x}} $ is 
the regular conditional probability defined by 
\begin{align}\label{:22x}&
\mu _{\mathbf{x}} = \mu (\cdot - \sum_{i=1}^{k} \delta _{x_i} | \ 
\sss ( x_i )\ge 1 \text{ for }i=1,\ldots,k)
.\end{align}
Let $ \rho ^{k}$ be the $ k $-point correlation function of 
$ \mu $ with respect to the Lebesgue measure. 
Let $ \muk $ be the measure on $ \SkS $ defined by 
\begin{align}\label{:22y}& 
\muk (A\ts B )= \int_{A}\mu _{\mathbf{x}} (B) \rho ^{k}( \mathbf{x} ) d \mathbf{x} 
.\end{align}
Here we set $ d \mathbf{x} = dx_1\cdots dx_k $ for 
$  \mathbf{x} =(x_1,\ldots,x_k) \in \Sk $. 
The measure $ \muk $ is called the $ k$-Campbell measure.  

\begin{dfn}\label{dfn:1}
We call $ \dmu \in \Llocone (\muone ) $ 
the logarithmic derivative of $ \mu$ if $ \dmu $ satisfies 
\begin{align}\label{:22z}&
\int_{\SoneSSS } \dmu f d \muone  = - \int_{\SoneSSS }
\PD{f(x,\sss )}{x} d \muone 
\quad \text{ for all } f \in C^{\infty}_{0} ((0,\infty))\ot C_b(\SSS ) 
.\end{align}
\end{dfn}
Very loosely, \eqref{:22z} can be written as 
$  \dmu  = {\partial \log \muone (x,\sss )}/{\partial x} $. 
This intuitive expression is the reason why we call $ \dmu $ the logarithmic derivative of $ \mu $. 

\begin{thm} \label{l:23} 
Assume that $ 1 \le \alpha < \infty $. 
Then $ \mub $ has the logarithmic derivative 
$ \dlog ^{\mub }  \in \Lloctwo (\mub ^{1}) $ defined by 
\begin{align}\label{:b23a}&
\dlog ^{\mub }(x,\mathsf{y})= \frac{\alpha }{x} + 
\sum_{i\in\N }\frac{2}{x-y_i}
.\end{align}
Here $\mathsf{y}=\sum_{i\in\N }\delta _{y_i}$. 
\end{thm}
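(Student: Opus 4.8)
\section*{Proof proposal for \tref{l:23}}

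The plan is to compute the logarithmic derivative by approximating $\mub$ with the finite $\n$-particle Bessel ensembles $\mub^{\n}$ (equivalently the rescaled Laguerre ensembles $\mm_{\alpha}^{\n}$ of \eqref{:13}) and then passing to the limit in the defining identity \eqref{:22z}. For the finite system the logarithmic derivative is explicit: differentiating $\log \mm_{\alpha}^{\n}$ in the first coordinate gives $-1/(8\n) + \alpha/x + \sum_{j}2/(x-y_j)$, so the natural candidate for the limit is $\alpha/x + \sum_{i}2/(x-y_i)$, which is \eqref{:b23a}. The drift from the confining term $e^{-\sum x_i/4\n}$ contributes $-1/(8\n)\to 0$, so it disappears in the limit, exactly as expected for the unconfined Bessel field. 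The strategy is thus: (i) express the Campbell measure identity for $\mub$ as a limit of Campbell measure identities for $\mub^{\n}$; (ii) establish the requisite convergence of the one-point Campbell measures $\mub^{\n,1}\to\mub^{1}$; (iii) show that the explicit finite-$\n$ logarithmic derivatives $\dlog^{\mub^{\n}}$ converge in $L^1_{\mathrm{loc}}(\mub^{\n,1})$, or more precisely in the appropriate sense against test functions $f\in C^\infty_0((0,\infty))\ot C_b(\SSS)$, to the right-hand side of \eqref{:b23a}; and (iv) verify the resulting limit lies in $\Lloctwo(\mub^1)$, which requires a second-moment bound on $\sum_i 2/(x-y_i)$ under $\mub^{1}_x$.

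The technical engine for steps (ii)--(iv) is the convergence of correlation functions: since $\mub^{\n}$ and $\mub$ are determinantal with the Laguerre kernel $\Kan$ and the Bessel kernel $\Ka$ respectively, and $\Kan\to\Ka$ uniformly on compacts (the hard-edge scaling limit), all $n$-point correlation functions converge locally uniformly, and one has uniform-in-$\n$ bounds on $\rbNone$, $\rbNtwo$ on each $\Sr$. This controls both the weak convergence of the Campbell measures and the tightness/uniform integrability needed to pass $\dlog^{\mub^{\n}}$ to the limit. The key analytic point is to handle the singular sum $\sum_{j\neq i}2/(x-y_j)$: near $x$ the terms $2/(x-y_j)$ are not individually integrable, so one must argue that after symmetrization (pairing $y_j$ on either side of $x$, or equivalently using the continuity and boundedness of the two-point correlation function $\rho^{\n,2}(x,y)$ of determinantal fields near the diagonal) the sum converges. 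Concretely, writing the contribution of the interaction term against a test function and expanding via \eqref{:22y} reduces everything to integrals of $\rho^{\n,2}$ against $\partial_x f(x,\cdot)/(x-y)$ over $|x-y|<r$, where the principal-value cancellation together with $\sup_\n\|\rho^{\n,2}\|_{L^\infty(\Sr\ts\Sr)}<\infty$ and $C^1$-regularity of the kernel gives a uniform bound; this is essentially the mechanism already used in \cite{o.isde,o.rm,o-t.airy}, and I would cite \tref{l:41} for the general form of this limit-theorem machinery.

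I expect the main obstacle to be precisely this control of the logarithmic-derivative sum near the diagonal, uniformly in $\n$: one needs not just pointwise convergence of kernels but quantitative estimates on $\Kan - \Ka$ and on $\partial_x\Kan$ near the diagonal on $\Sr$, so that the error terms in the finite-$\n$ Campbell identity vanish and the limiting sum is genuinely $L^2(\mub^1)$ rather than merely $L^1$. A secondary difficulty is the behavior near the origin: since $\alpha\geq 1$ the term $\alpha/x$ is integrable against $\rbone$ near $0$ (the one-point function vanishes fast enough at the hard edge), but this must be checked via the asymptotics of the Bessel kernel $\Ka(x,x)$ as $x\downarrow 0$, namely $\Ka(x,x)\sim c\, x^{\alpha}$ up to constants, which makes $\int_0 (\alpha/x)^2 \Ka(x,x)\,dx<\infty$ when $\alpha\geq 1$ — and this is exactly where the hypothesis $1\le\alpha$ enters. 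Once these estimates are in place, plugging the finite-$\n$ identity into \eqref{:22z}, passing to the limit, and invoking the density of $C^\infty_0((0,\infty))\ot C_b(\SSS)$ finishes the proof.
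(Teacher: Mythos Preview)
Your overall framework is the paper's framework: approximate $\mub$ by the finite Laguerre ensembles $\mub^{\n}$, compute $\dlog^{\mub^{\n}}$ explicitly, and invoke \tref{l:41}. But you have misidentified where the work lies, and the step you single out as the obstacle is not the one that actually needs a new idea.

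The near-diagonal contribution $\sum_{|x-y_i|<s}2/(x-y_i)$ is \emph{not} the hard part. In the setup of \tref{l:41} this is the term $\ggNs$, and its $\Llocp$-convergence is handled by the abstract machinery once you have locally uniform convergence and growth bounds on correlation functions, i.e.\ \eqref{:40a}--\eqref{:40b}; those follow directly from $\Kan\to\Ka$ and Hadamard's inequality, no principal-value argument or $\partial_x\Kan$ estimates required. The genuine obstacle is the \emph{tail} term $\rrNs(x,\mathsf{y})=\sum_{|x-y_i|\ge s}2/(x-y_i)$: you must show this vanishes in $L^2(\mub^{\n,1})$ as $s\to\infty$, uniformly in $\n$ (condition \eqref{:40f}). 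For a logarithmic interaction $1/|x-y|$ is not integrable at infinity against a bounded density, so this requires a quantitative global decay estimate on the one-point function. The paper's key input is \lref{l:50}: $\rbNone(y)\le c/\sqrt{y}$ for $1\le y\le 4\n\omega$, uniformly in $\n$, obtained from Hilb-type asymptotics for Laguerre polynomials (\lref{l:72}). With this bound in hand, the paper reduces \eqref{:40f} via \lref{l:42} to four integral conditions \eqref{:42A}--\eqref{:42D}, and then uses the determinantal Palm formula \eqref{:82k} and the Schwarz bound \eqref{:53a} to collapse all four to the single estimate \eqref{:52a}. None of this appears in your proposal.

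Two smaller points. Your explanation of where $\alpha\ge 1$ enters is off: the test functions lie in $C^\infty_0((0,\infty))$, so integrability of $(\alpha/x)^2\rbone(x)$ near $0$ is never at issue; the restriction is tied to the boundary behaviour of particles (see \rref{r:23}). And the finite log-derivative has free term $-1/(4\n)$, not $-1/(8\n)$; the factor $1/2$ belongs to the SDE, not to $\dlog^{\mub^{\n}}$.
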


\begin{rem}\label{r:23} 
Since we suppose $ 1 \le \alpha $, particles never hit the origin in \tref{l:23}. 
If $ -1 < \alpha < 1 $, then the left most particle hits the origin. 
Hence in the definition of logarithmic derivative, it would be more natural 
to take $ C^{\infty}_{0} ([0,\infty))\ot C_b(\SSS ) $ as a space of test functions. 
In this case, the logarithmic derivative contains a term arising from the bounadry condition. 
Although it would be interesting to study this case, we do not pursue this here.
\end{rem}

We next introduce the notion of the quasi-Gibbs property. 
 
For two measures $ \nu _1,\nu _2 $ on a measurable space 
$ (\Omega , \mathcal{B})$ we write $ \nu _1 \le \nu _2 $ 
if $ \nu _1(A)\le \nu _2(A)$ for all $ A\in\mathcal{B}$. 
We say a sequence of finite Radon measures $ \{ \nu ^{\n }  \} $ 
on a Polish space $ \Omega $ converge weakly to 
a finite Radon measure $ \nu $ 
if $ \lim_{ \n \to \infty} \int fd\nu ^{\n }  = \int f d\nu $ 
for all $ f \in C_b(\Omega ) $.

Let $ \{ b_r \}_{r=1}^{\infty} $ be an increasing sequence of natural numbers. 
Let 
\begin{align*}&
\text{$ \SSS _r^m = \{ \mathsf{x} \in \SSS \, ;\, 
 \mathsf{x} (\SS _{b_r} ) = m \} $ and 
 $ \Lambda _r^m = \Lambda (\cdot \cap \SSS _r^m )$}
,\end{align*}
where $ \Lambda $ is the Poisson random point field 
 whose intensity is the Lebesgue measure. 
We denote by $ \mathcal{H}_{ r }(\mathsf{x} ) $ the Hamiltonian on $ \SS _{b_r}$ such that 
\begin{align}\label{:qg5}&
\mathcal{H}_{ r }(\mathsf{x} ) = 
\sum_{x_i\in \SS _{b_r}} \Phi (x_i) + 
\frac{1}{2} \sum_{x_i,x_j \in \SS _{b_r} ,\, i\not= j} 
 \Psi (x_i,x_j)
.\end{align}
Here we set $ \mathsf{x} = \sum_i \delta _{x_i}$. 
Let $ \map{\pi _r, \pi _r^c}{\SSS }{\SSS }$ be the maps such that 
\begin{align}& \notag 
\text{$ \pi _r (\mathsf{s})(\cdot ) = \mathsf{s}(\cdot \cap \SS _{b_r} )$ and 
 $ \pi _r^c (\mathsf{s})(\cdot ) = \mathsf{s}(\cdot \cap \SS _{b_r}^c )$. }
\end{align}
\begin{dfn}\label{dfn:2}
A probability measure $ \mu $ is said to be 
a $ (\Phi , \Psi ) $-quasi Gibbs measure if 
there exists an increasing sequence of natural numbers $ \{ b_r \}_{r=1}^{\infty} $ 
such that, for each $ r,m \in \N $ and for $ \mu _{r,k}^m $-a.e.\,  $\sss \in \SSS $, 
there exists a sequence of Borel subsets $ \SSS _{r,k}^m $ satisfying 
\begin{align} & \notag 
\SSS _{r,k}^m \subset \SSS _{r,k+1}^m\subset \SSS _{r}^m \quad \text{ for all }k ,
\\ \notag &
\limi{k} \mu (\cdot \cap \SSS _{r,k}^m ) = \mu (\cdot \cap \SSS _{r}^m ) 
\quad \text{ weakly}
,\end{align}
and that $  \mu _{r,k}^m = \mu (\cdot \cap \SSS _{r,k}^m )$ satisfy, 
for each $ r,m , k \in \N $ and for $ \mu _{r,k}^m $-a.e.\,  $\sss \in \SSS $, 
\begin{align}\label{:qg2}&
\cref{;2y}^{-1} 
e^{- \mathcal{H}_{ r }(\mathsf{x} )  } \Lambda _r^m (d\mathsf{x})
 \le 
\mu _{r,k,\mathsf{s}}^{m}(d\mathsf{x}) 
\le 
\cref{;2y} 
 e^{- \mathcal{H}_{ r }(\mathsf{x} )}
\Lambda _r^m (d\mathsf{x})
.\end{align}
Here $\Ct \label{;2y} = \cref{;2y}(r,m,k,\pi _{ r }^c (\mathsf{s}))$ 
is a positive constant and $ \mu _{r,k,\mathsf{s}}^{m}$ 
is the regular conditional probability measure of $ \mu _{r,k}^m$ defined by 
\begin{align} \label{:qg4} 
& \mu _{r,k,\mathsf{s}}^{m}(d\mathsf{x}) = \mu _{r,k}^m(\pi _{ r } 
\in d\mathsf{x} | \ \pi _{ r }^c ) (\mathsf{s}) 
.\end{align}
\end{dfn}
The notion of quasi-Gibbsian is first introduced in \cite{o.rm}. 
The original definition of the quasi-Gibbs measures 
 is slightly more general than the present version, and is essentially the same. 
We adopt here a restrictive version for the sake of simplicity. 

We remark that we do not assume the symmetry of the interaction potential $ \Psi $. Hence we take the ordered summation of $ \Psi (x_i,x_j)$, 
and put $ 1/2$ in the sum of \eqref{:qg5}. 

\begin{thm} \label{l:24} 
Let $ -1 < \alpha < \infty $. 
Then $ \mub $ is a $ (\alpha \log x , 2 \log |x-y| ) $-quasi Gibbs measure. 
\end{thm}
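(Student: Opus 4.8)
## Proof Proposal for Theorem~\ref{l:24}

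The plan is to verify the quasi-Gibbs property for $\mub$ by passing to the thermodynamic limit from the finite $\n$-particle approximations $\mub^{\n}$, whose labeled densities $\mm_{\alpha}^{\n}$ are given explicitly in \eqref{:13}. The essential observation is that $\mm_{\alpha}^{\n}$ already has the Gibbsian form with free potential $\Phi_{\alpha}(x) = -\alpha\log x + x/(4\n)$ and interaction $\Psi(x,y) = -2\log|x-y|$; the extra Gaussian-type term $x/(4\n)$ vanishes locally uniformly as $\n\to\infty$, so on a fixed bounded window $\SS_{b_r}$ it contributes only a factor bounded above and below by positive constants depending on $r$. Thus for each fixed $r$ the conditional density of $\mub^{\n}$ restricted to $\SS_{b_r}$, given the outside configuration, is sandwiched between $\cref{;2y}^{-1} e^{-\mathcal{H}_r}\Lambda_r^m$ and $\cref{;2y}\, e^{-\mathcal{H}_r}\Lambda_r^m$ with a constant depending only on $r$, $m$, and the boundary configuration $\pi_r^c(\mathsf{s})$ — this is the key estimate, and it follows from elementary bounds on $-\alpha\log x$ (which is bounded on $\SS_{b_r}$ once we truncate near the origin) and on the interaction between inside and outside particles.

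The steps I would carry out are as follows. First, fix the increasing sequence $\{b_r\}$ and choose the cutoff sets $\SSS_{r,k}^m$ to control the contribution of the boundary: the natural choice is to require that the particles of $\pi_r^c(\mathsf{s})$ lying near $\SS_{b_r}$ are at distance at least $1/k$ from $\SS_{b_r}$ (or to bound their number and position), which makes the interaction energy between the inside block and the outside block uniformly bounded on $\SSS_{r,k}^m$, and ensures $\limi{k}\mub(\cdot\cap\SSS_{r,k}^m) = \mub(\cdot\cap\SSS_r^m)$ weakly. Second, establish the finite-$\n$ version of \eqref{:qg2} directly from \eqref{:13} using the DLR-type decomposition of the product form: the factor $\prod_{k<l}|x_k-x_l|^2$ splits into inside-inside, inside-outside, and outside-outside pieces, the first giving exactly the Hamiltonian $\mathcal{H}_r$, the second absorbed into $\cref{;2y}$ on the cutoff set, the third cancelling in the conditional density. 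Third, pass to the limit $\n\to\infty$: since $\mub^{\n}\to\mub$ weakly and the correlation functions $\rbN$ converge to $\rbn$ (a standard fact for the Laguerre-to-Bessel hard-edge limit, or available from the determinantal structure), the Campbell measures $\mub^{\n,[k]}$ converge to $\mub^{[k]}$, and the sandwich bound \eqref{:qg2}, being a closed condition stable under weak convergence once the constants are controlled uniformly in $\n$, carries over to $\mub$.

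I expect the main obstacle to be the uniform-in-$\n$ control of the constant $\cref{;2y}$ in the finite-particle bound, together with the measure-theoretic care needed to pass the conditional (Palm-type) inequality through the weak limit. Concretely, one must show that the interaction energy $\sum_{x_i\in\SS_{b_r}, x_j\notin\SS_{b_r}} \Psi(x_i,x_j)$ between a bounded block and the (infinitely many) outside particles is well-defined and finite $\mub$-a.s., which requires a logarithmic moment / tail estimate on the outside configuration — this is where the specific structure of the Bessel kernel $\Ka$ enters, via decay estimates on $\Ka(x,y)$ for large arguments that give control of the correlation functions far from the origin. A secondary technical point is handling the range $-1<\alpha<1$ (where the theorem is still asserted, unlike Theorem~\ref{l:23}): near the origin the free potential $-\alpha\log x$ is integrable against Lebesgue for $\alpha > -1$, so the normalization of $e^{-\mathcal{H}_r}\Lambda_r^m$ on $\SS_{b_r}$ is still finite, but one must check the lower bound in \eqref{:qg2} does not degenerate, which again reduces to integrability of $x^{\alpha}$ near $0$ — harmless for $\alpha>-1$. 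Once these uniform estimates are in hand, the conclusion is a routine application of the weak-convergence stability of the quasi-Gibbs inequalities.
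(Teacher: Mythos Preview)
Your strategy --- approximate by the finite-$\n$ Gibbs measures $\mubN$, establish the sandwich bound \eqref{:qg2} at finite $\n$, and pass to the limit --- is exactly the mechanism behind the proof, but the paper does not carry it out by hand. Instead it invokes a general black-box result, \cite[Theorem~2.2]{o.rm2} (quoted here as \tref{l:62}), which packages precisely the limiting argument you sketch under three hypotheses \Ass{B.1}--\Ass{B.3}. Conditions \Ass{B.1} and \Ass{B.2} (correlation-function convergence and bounds, canonical Gibbs structure of $\mubN$, convergence of $\PhiN$) are immediate from the Laguerre-ensemble construction in \sref{s:5}; the only substantive check is \Ass{B.3} with $\ell_{0}=1$, namely
\[
\sup_{\n}\int_{1}^{\infty}\frac{1}{x}\,\rbNone(x)\,dx<\infty,
\]
which is the quantitative form of the ``logarithmic moment / tail estimate'' you anticipate. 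The paper derives this from \lref{l:50} (the uniform bound $\rbNone(x)\le \cref{;b52f}/\sqrt{x}$ on $1\le x\le 4\n\omega$, proved via Hilb's asymptotic formula for Laguerre polynomials) together with the total-mass bound $\int\rbNone=\n$ on the remaining tail; see the proof of \lref{l:52}. Your sketch is correct but underspecified at this one point: ``decay estimates on $\Ka(x,y)$ for large arguments'' is not quite the right object --- what is actually needed is a bound on the \emph{one-point} density $\rbNone(x)=\Kan(x,x)$ that is \emph{uniform in $\n$}, and this is the longest technical computation in the paper (relegated to \sref{s:7}). What you gain by routing through \tref{l:62} is that the construction of the cutoff sets $\SSS_{r,k}^m$ and the passage of the conditional inequality through the weak limit --- the two obstacles you flag --- are handled once and for all in \cite{o.rm2}; what your direct approach would buy is self-containment, at the cost of reproducing that general argument in this special case.
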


Combining \tref{l:24} with a general theory \cite[Corollary 2.1]{o.rm}, 
we obtain a natural unlabeled $ \mub $-reversible diffusion 
$ (\mathsf{X},\mathsf{P})$.  
\begin{thm} \label{l:25} 
Let $ -1 < \alpha < \infty $. 
Let $ \mathcal{E}^{\mub } $ and 
$ \di ^{\mub }$ be as in \eqref{:26j} and \eqref{:26J} 
 with $ k=0 $ and $ \mu = \mub $, 
Then $ (\mathcal{E}^{\mub },\di ^{\mub })$ is closable on $ L^2 (\SSS , \mub )$. There exists a diffusion $ (\mathsf{X},\mathsf{P})$ 
associated with the closure of 
$ (\mathcal{E}^{\mub },\di ^{\mub })$ on $ L^2 (\SSS , \mub )$. 
\end{thm}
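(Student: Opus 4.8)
The plan is to deduce Theorem~\ref{l:25} from the general construction of unlabeled $\mu$-reversible diffusions in \cite[Corollary~2.1]{o.rm}. That result asserts that if $\mu$ is a $(\Phi,\Psi)$-quasi-Gibbs measure whose correlation functions $\{\rho^{n}\}$ are locally bounded with the standard uniformity in $n$, then the bilinear form $(\mathcal{E}^{\mu},\di^{\mu})$ defined by \eqref{:26j}--\eqref{:26J} with $k=0$ is closable on $L^{2}(\SSS,\mu)$, its closure is a quasi-regular local Dirichlet form, and hence, by the general theory of Dirichlet forms \cite{fot2}, it admits an associated $\mu$-symmetric diffusion with state space $\SSS$. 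It therefore suffices to check these hypotheses for $\mu=\mub$.

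The substantive hypothesis is exactly Theorem~\ref{l:24}: for every $-1<\alpha<\infty$, $\mub$ is an $(\alpha\log x,\,2\log|x-y|)$-quasi-Gibbs measure, that is, a $(\Phi,\Psi)$-quasi-Gibbs measure with $\Phi(x)=\alpha\log x$ and $\Psi(x,y)=2\log|x-y|$. For the regularity condition, note that $\Ka$ is continuous on $\SS\times\SS$ by \eqref{:10b}--\eqref{:10bb}, so Hadamard's inequality for positive semidefinite matrices gives, for $\mathbf{x}\in\Sr^{n}$,
\begin{align*}
0\le \rbn(\mathbf{x})=\det[\Ka(x_i,x_j)]_{1\le i,j\le n}\le \prod_{i=1}^{n}\Ka(x_i,x_i)\le \Big(\sup_{x\in\Sr}\Ka(x,x)\Big)^{n};
\end{align*}
hence $\{\rbn\}$ are locally bounded with the required growth, and in particular $\int_{\SSS}\sss(\Sr)^{k}\,d\mub<\infty$ for all $k,r\in\N$. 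One also records $\mub(\SSSsi)=1$: simplicity of $\mub$ is standard for determinantal fields with a continuous kernel \cite{soshi.drpf}, while $\sss(\SS)=\infty$ holds $\mub$-a.s.\ because $\mathbb{E}[\sss(\SS)]=\int_{\SS}\Ka(x,x)\,dx=\infty$, the one-point function $\rbone(x)=\Ka(x,x)$ decaying only like $x^{-1/2}$ as $x\to\infty$. With all hypotheses of \cite[Corollary~2.1]{o.rm} in force, the closability of $(\mathcal{E}^{\mub},\di^{\mub})$ and the existence of the diffusion $(\mathsf{X},\mathsf{P})$ follow.

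The only point requiring genuine care --- the main obstacle if one does not merely invoke the general theory --- is to reconcile the restrictive form of the quasi-Gibbs property established in Theorem~\ref{l:24}, with its specific choice of $\{b_r\}$ and of the exhausting sets $\SSS_{r,k}^{m}$, with the hypothesis format of \cite[Corollary~2.1]{o.rm}; and, since $\Phi(x)=\alpha\log x$ is only locally integrable and, for $-1<\alpha<0$, unbounded below near the boundary $x=0$, to confirm that this borderline singularity of the Hamiltonian $\mathcal{H}_{r}$ is absorbed into the constants $\cref{;2y}$ of \eqref{:qg2} uniformly on the relevant configuration sets. Both points are handled exactly as for the sine random point field in \cite{o.rm}: the quasi-Gibbs estimate \eqref{:qg2} is insensitive to the sign of $\Phi$ and to the fact that $\Phi$ is merely locally integrable, so that argument applies here with only notational changes.
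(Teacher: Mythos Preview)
Your approach is exactly the paper's: Theorem~\ref{l:25} is stated with the one-line justification ``Combining \tref{l:24} with a general theory \cite[Corollary 2.1]{o.rm}'', and the closability ingredient is also invoked in the proof of \tref{l:21}~\thetag{1} via \cite[Lemma~3.6]{o.rm} from the quasi-Gibbs property together with upper semi-continuity of the potentials. Your verification of the correlation-function bound and of $\mub(\SSSsi)=1$ is more than the paper spells out here (it records these facts in \lref{l:33} and elsewhere), but the strategy is identical.

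One small slip in your final paragraph: with $\Phi(x)=\alpha\log x$ you have $\Phi(x)\to-\infty$ as $x\to0^+$ precisely when $\alpha>0$, not when $-1<\alpha<0$; for negative $\alpha$ the potential blows up to $+\infty$ at the origin. This does not affect the argument---the paper simply notes that the potentials are continuous on $(0,\infty)$ and appeals to the upper-semicontinuity hypothesis of \cite[Lemma~3.6]{o.rm}---but you should correct the sign if you keep that discussion.
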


\begin{rem}\label{r:25} 
\thetag{1} If $ -1 < \alpha < 1$, then the left most particle hits the origin. 
\\
\thetag{2} 
We write the diffusion $ \mathsf{X}$ in \tref{l:25} as 
$ \mathsf{X}_t = \sum_{i\in\N } \delta_{X_t^i}$. 
Since the particles never collide each other, 
the infinite-dimensional labeled paths $ (X_t^i)_{i\in\N }$ 
is well defined. 
Then with suitable labeling of the unlabeled particles $ \mathsf{X}$ 
at time $ t=0$, 
the solution of the ISDE $ (X_t^i-X_0^i)_{i\in\N }$ 
becomes an infinite-dimensional additive functional 
of the unlabeled diffusion $ (\mathsf{X},\mathsf{P})$. 
We remark that this additive functional is {\em not} Dirichlet process 
because {\em no} coordinate functions $ x_i $ ($i\in\N $) 
 belong to the domain of the Dirichlet form even if locally. 
 \\
\thetag{3} 
There are other approaches for this kind of 
unlabeled stochastic dynamics related to random matrix theory. 
See \cite{sp.2}, \cite{kt.cmp}, \cite{kt.11}, \cite{boro-ols.12}, \cite{boro-gorin.13}, and \cite{ols.11}. 
These approaches are more algebraic, and restricted to one dimensional system with inverse temperature $ \beta = 2 $. 
\\\thetag{4} 
Let $ \kpath $ be the map from $ C([0,\infty);\SS ^{\mathbb{N}})$
 to $ C([0,\infty);\SSS )$ defined by 
\begin{align}\label{:20b}&
\kpath (\mathbf{X}) = \{ \sum_{i=1}^{\infty} 
\delta _{X_t^i}\}_{t\in[0,\infty)} = 
\{\ulab (\mathbf{X}_t) \}_{t\in[0,\infty)}
,\end{align}
where $ \mathbf{X} = \{(X^i_t)_{i=1}^k \} $. We set 
$ \mathsf{X} = \kpath (\mathbf{X})$. 
We call $ \mathbf{X} $ (resp. $ \mathsf{X} $) the labeled process (unlabeled process). 
Then the relation between the labeled process $ \mathbf{X}$ in \tref{l:21} 
and the unlabeled process $ \mathsf{X}$ in \tref{l:25} is 
that $ \kpath (\mathbf{X}) = \mathsf{X}$ for a suitable version of 
the processes with quasi-everywhere starting points.  
This identity is a corollary of \cite[Theorem 2.4]{o.tp}. 
\end{rem}

\section{Proof of Theorems \ref{l:21} and \ref{l:22}}\label{s:3}
The purpose of this section is to prove Theorems \ref{l:21} and \ref{l:22}. 
For this we will use \tref{l:23} and \tref{l:24}, and 
a result from \cite{o.isde} in a reduced form being 
sufficient for the present problem. 

Let $ \mathbf{X}=(X_t^i)_{i\in\mathbb{N}}$ be a labeled process as before. 
For $ k \in \{ 0 \}\cup \N  $, 
the process $ (X_t^1,\ldots,X_t^k, \sum_{j>k}^{\infty}\delta_{X_t^j})$ 
is said to be a $ k$-labeled process. 
When $ k=0$, the $ k$-labeled process equals the unlabeled process $ \kpath (\mathbf{X}) $. 

We introduce Dirichlet forms describing the $ k $-labeled process. 
For a subset $ A \subset \SS $ we define the map 
$ \map{\pi _{A }}{\SSS }{\SSS } $ by 
$ \pi _{A } (\sss  ) = \sss ( A \cap \cdot )  $. 
We say a function $ \map{f}{\SSS }{\R } $ is local if $ f $ is 
$ \sigma[\pi _{ A }]$-measurable for some compact set $ A \subset \SS $. 
We say $ f $ is smooth if $ \tilde{f} $ is smooth, 
where $ \tilde{f}((s_i)) $ is the permutation invariant function in $ (s_i) $ such that 
$ f (\sss  ) = \tilde{f} ((s_i)) $ for $ \sss  = \sum _i \delta _{s_i} $. 

Let $ \di $  be the set of all local, smooth functions on $ \SSS $. 
For $ f,g \in \di $ we set $ \map{\DDD [f,g]}{\SSS }{\R } $ by 
\begin{align} \label{:30a} & 
\DDD [f,g](\sss ) = 
\frac{1}{2} \sum _{ i } 
\PD{\widetilde{f}(\mathbf{s})}{s_{i}} \PD{\widetilde{g}(\mathbf{s})}{s_{i}}
.\end{align}
Here $ \sss = \sum_{i}\delta _{s_i}$ and $ \mathbf{s}=(s_i)$. 
For given $ f $ and $ g $ in $ \di $, it is easy to see that the right-hand side of \eqref{:30a} depends only on $ \sss $. So $ \DDD [f,g]$ is well defined. %
For $ f,g \in C_0^{\infty}(\Sk )\ot \di $ let $\nabla ^{k}[f,g]$ be the function on $ \SkS $ defined by 
\begin{align}\label{:26h}&
\nabla ^{k}
[f,g ] (\mathbf{x},\sss ) = 
\frac{1}{2} \sum _{j=1}^{k}
\PD{f(\mathbf{x}, \sss )}{x_{j}}\PD{g(\mathbf{x}, \sss )}{x_{j}}
.\end{align}
where $\mathbf{x} =(x_j)\in \Sk $. We set $ \DDDk  $ for $ k\ge 1$ by 
\begin{align}\label{:26i}&   
\DDDk [f,g](\mathbf{x},\sss ) = 
\nabla ^{k}[f,g] (\mathbf{x},\sss )
+
\DDD [f(\mathbf{x},\cdot ),g(\mathbf{x},\cdot )](\sss )
.\end{align}
Let $ (\Eak ,\dik )$ be the bilinear form defined by  
\begin{align}\label{:26j} &
\Eak (f,g) = \int_{\SkS }  \DDDk [f,g] d\muk 
,\\\label{:26J}& 
\dik = \{ f \in C_0^{\infty}(\Sk )\ot \di \cap  L^2(\SS ^k \ts \SSS , \muk ) 
\, ;\, \, 
\Eak (f,f) < \infty 
\} 
.\end{align}
When $ k=0 $, we take $ \DDDzero =\DDD $, 
$ \mu ^{0} = \mu $, and $ \Eazero = \Ea $. 
We set $ \Lm = L^2(\SSS , \mu)$ and $ \Lmuk = L^{2}(\SkS ,\muk ) $ and so on.


We assume that there exists a probability measure $ \mu $ on $ \SSS $ 
with correlation functions $ \{ \rho ^{k} \}_{k\in\N } $ satisfying 
\Ass{A.1}--\Ass{A.5}: 
\\
\Ass{A.1} $ \rho ^{k}$ is locally bounded for each $ k \in \N $. 
\\
\Ass{A.2} There exists a logarithmic derivative $ \dmu $ in the sense of \eqref{:22z}. 
\\%
\Ass{A.3} 
$(\Eak ,\dik )$ is closable on $ \Lmuk $ for each $ k\in\{ 0 \}\cup \N $. 
\\
\Ass{A.4} 
$\mathrm{Cap}^{\mu } (\{\SSSsi \}^c) = 0 $.
\\
\Ass{A.5} 
There exists a $ T>0 $ such that for each $ R>0 $ 
\begin{align}\label{:26p}&
\liminf_{r\to \infty}\ ( \{ \int_{|x|\le {r+R}} \rho ^1 (x)dx \}
\{ \int_{\frac{r}{\sqrt{(r+R )T}}}^{\infty} e^{-u^{2}/2}du \} ) = 0
.\end{align}

\medskip

Let $ (\Eak ,\dak )$ be the closure of $ (\Eak ,\dik )$ on $ \Lmuk $. 
It is known \cite[Lemma 2.3]{o.tp} that $ (\Eak ,\dak )$ is quasi-regular and that the associated diffusion $ (\PPk , \mathsf{X}^k )$ exists. We refer to \cite{mr} for the definition and 
necessary background of quasi-regular Dirichlet forms.  
We remark that $ \mathrm{Cap}^{\mu }$ in \Ass{A.4} is  
 the capacity of the Dirichlet space $(\Eazero ,\d ^{\mu } ,\Lm )$. 

The assumptions \Ass{A.4} and \Ass{A.5} have clear dynamical interpretations. Indeed, \Ass{A.4} means that particles never collide with each other. 
Moreover, \Ass{A.5} means that no labeled particle ever explodes \cite{o.tp}. 

We quote two theorems from \cite{o.isde}. 
\begin{thm}[{\cite[Theorem 26]{o.isde}}]\label{l:31} 
Assume \Ass{A.1}--\Ass{A.5}. Then there exists an 
$ \SSS _0 $ such that 
\begin{align}\label{:26q}&
\mu (\SSS _0 )= 1, \quad \SSS _0 \subset 
\SSSsi 
,\end{align}
and that, for all $  \mathbf{s}\in \ulab ^{-1}(\SSS _0 )$, there exists 
an $\SS ^{\N }$-valued continuous process $ \mathbf{X}=(X^i)_{i\in\N }$, and 
$(\R )^{\N }$-valued Brownian motion $ \mathbf{B}=(B^i)_{i\in\N }$ satisfying 
\begin{align}\label{:26r}&
dX^i_t = dB^i_t +\frac{1}{2}\dmu (X^i_t,\mathsf{X}^{i*}_t)dt \quad (i\in \N )
,\\ \label{:26s}&
\mathbf{X}_0 = \mathbf{s}
.\end{align}
Moreover, $ \mathbf{X}$ satisfies %
\begin{align}\label{:26t}&
P (\ulab (\mathbf{X}_t) \in \SSS_0 ,\ 0\le \forall t < \infty ) = 1 
.\end{align}
\end{thm}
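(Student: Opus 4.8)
The plan is to realize $\mathbf{X}$ as the labeled version of a $\mu$-symmetric diffusion built from the Dirichlet forms $(\Eak,\dik)$ of \eqref{:26j}--\eqref{:26J}, and to read off the drift from the logarithmic derivative. First I would note that, by \Ass{A.3} with $k=0$, the form $(\Eazero,\d^{\mu})$ is closable on $\Lm$, and by \cite[Lemma~2.3]{o.tp} its closure is quasi-regular, so there is an associated $\mu$-symmetric diffusion $(\mathsf{P}_{\mathsf{s}},\mathsf{X})$ on $\SSS$. Since $\mathrm{Cap}^{\mu}(\SSSsi^c)=0$ by \Ass{A.4}, removing a capacitary-exceptional set gives a Borel set $\SSS_0\subset\SSSsi$ with $\mu(\SSS_0)=1$ such that, whenever $\mathsf{X}_0\in\SSS_0$, one has $\mathsf{X}_t\in\SSSsi$ for all $t$; hence no two tagged particles ever collide, the labeling chosen at $t=0$ is preserved, and the coordinate trajectories $X^i_t$ are well-defined continuous $\SS$-valued processes. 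Shrinking $\SSS_0$ slightly, while keeping full $\mu$-measure, so that it is invariant under the dynamics yields \eqref{:26t}.

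Next I would extract the SDE. By \Ass{A.1} the correlation functions are locally bounded, so the Campbell measures $\muk$ are Radon and \Ass{A.2} provides $\dmu\in\Llocone(\muone)$. Applying the Fukushima decomposition on the $k$-labeled form $(\Eak,\dak)$ to a coordinate function $x_i$ ($i\le k$, admissible after a local cutoff), and using $\DDDk[x_i,x_i]\equiv\frac12$ together with $\DDDk[x_i,x_j]\equiv0$ for $i\neq j$, the martingale part $M^i$ satisfies $\langle M^i\rangle_t=t$ and $\langle M^i,M^j\rangle_t=0$, so by Lévy's characterization $\mathbf{B}:=(M^i)_{i\in\N}$ is an $\R^{\N}$-valued Brownian motion. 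The defining identity \eqref{:22z} of the logarithmic derivative is exactly the assertion that the bounded-variation part of this decomposition equals $\frac12\int_0^t\dmu(X^i_s,\mathsf{X}^{i*}_s)\,ds$, giving \eqref{:26r}; and \eqref{:26s} holds by construction. Letting $k\to\infty$ through the consistency of the $k$-labeled forms (forgetting the last tagged particle maps the $k$-labeled diffusion to the $(k-1)$-labeled one, quasi-everywhere) produces the full $\SS^{\N}$-valued process.

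It remains to rule out explosion of a tagged particle, which is where \Ass{A.5} enters. Fixing $T>0$ as in \eqref{:26p} and $R>0$, the expected number of particles in $\{|x|\le r+R\}$ is $\int_{|x|\le r+R}\rho^1(x)\,dx$, while a particle starting there can leave the ball $\{|x|<r\}$ before time $T$ only via a fluctuation of its Brownian part of size of order $r$, whose probability is comparable to $\int_{r/\sqrt{(r+R)T}}^{\infty}e^{-u^2/2}\,du$. Multiplying these and taking the $\liminf$ along \eqref{:26p} makes the probability that some labeled particle reaches arbitrarily large radii before time $T$ vanish, so $\sup_{t\le T}|X^i_t|<\infty$ almost surely for every $i$; iterating over $T\in\N$ gives non-explosion for all time. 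The complete argument is \cite[Theorem~26]{o.isde}.

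The hard part is the passage to infinitely many labeled particles: the Dirichlet-form construction yields the unlabeled dynamics $\mathsf{X}$ directly, and upgrading it to a genuine labeled process carrying the drift $\frac12\dmu$ requires controlling all tagged particles at once — one needs the non-collision property \Ass{A.4} to keep the labels coherent, and the tail bound \eqref{:26p} to keep each coordinate, and hence each additive functional $\int_0^t\dmu(X^i_s,\mathsf{X}^{i*}_s)\,ds$, finite uniformly enough to survive the limit in $k$.
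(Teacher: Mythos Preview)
The paper does not prove this theorem: it is stated as a quotation from \cite[Theorem~26]{o.isde} and given no proof in the present paper. Your proposal ultimately does the same thing---it closes by citing \cite[Theorem~26]{o.isde}---so at the level of what is actually being invoked the two agree.

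What you add is an informal outline of the argument behind that citation: build the unlabeled $\mu$-symmetric diffusion from the quasi-regular Dirichlet form via \cite[Lemma~2.3]{o.tp}, use \Ass{A.4} to remove collisions and make the labeling coherent, apply the Fukushima decomposition to the tagged coordinates and identify the drift through the defining relation \eqref{:22z} of $\dmu$, and use \Ass{A.5} for non-explosion. This is a faithful sketch of the strategy in \cite{o.isde,o.tp}, and it matches the paper's own parenthetical remarks that \Ass{A.4} and \Ass{A.5} encode non-collision and non-explosion of tagged particles. You correctly flag that the genuinely delicate step is the passage from finitely many tagged particles to the full labeled process, which is precisely the content of the cited theorem rather than something one can dispatch in a paragraph. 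So your proposal is correct and consistent with the paper; it simply supplies more context than the paper itself, which is content to quote the result.
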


\begin{thm}[{\cite[Theorem 27]{o.isde}}] \label{l:32}
Let $ \SSSS _0 $ be the subset of $ \SS ^{\N }$ defined by 
$ \SSSS _0 =\ulab ^{-1}(\SSS _0 )$. Let $ \mathbf{P}_{\mathbf{s}}$ be the distribution of $ \mathbf{X}$ given by \tref{l:31}. Then 
$\{\mathbf{P}_{\mathbf{s}}\}_{\mathbf{s}\in\SSSS _0 }$ 
is a diffusion with state space $ \SSSS _0 $.
\end{thm}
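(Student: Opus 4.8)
\noindent\emph{Proof proposal.}\ The plan is to realize $\mathbf{P}_{\mathbf{s}}$ as a consistent family, over $k\in\{0\}\cup\N$, of the laws of the $k$-labeled processes, and to read off continuity, the Markov structure, and the confinement to $\SSSS_0$ from these pieces. First I would recall, from the discussion preceding \tref{l:31} (which rests on \cite[Lemma 2.3]{o.tp}), that each form $(\Eak,\dak,\Lmuk)$ is quasi-regular and therefore carries an associated diffusion $(\PPk,\mathsf{X}^k)$ on $\Sk\times\SSS$; as a Hunt process, $\mathsf{X}^k=(\mathsf{X}^{k,1},\ldots,\mathsf{X}^{k,k},\mathsf{X}^{k,k+1})$ has continuous paths and enjoys the strong Markov property for its natural filtration. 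I would then use the non-collision assumption (A.4) together with the construction behind \tref{l:31} to identify, for $\mu\circ\ulab^{-1}$-a.e.\ and in fact --- after the pathwise upgrade furnished by \tref{l:31} --- for every $\mathbf{s}\in\SSSS_0$, the first $k$ tagged coordinates of $\mathbf{X}$ together with the residual field $\mathsf{X}^{k*}=\sum_{j>k}\delta_{X^j}$ with the diffusion $\mathsf{X}^k$ started at $(s_1,\ldots,s_k,\sum_{j>k}\delta_{s_j})$; these finite-$k$ descriptions are mutually consistent, since discarding the $(k{+}1)$st tagged particle turns $\mathsf{X}^{k+1}$ into $\mathsf{X}^k$.

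The continuity and state-space assertions would then be immediate from \tref{l:31}: under $\mathbf{P}_{\mathbf{s}}$ the process $\mathbf{X}=(X^i)_{i\in\N}$ is $\SS^{\N}$-valued and continuous, $\mathbf{X}_0=\mathbf{s}$ by \eqref{:26s}, and \eqref{:26t} gives $\ulab(\mathbf{X}_t)\in\SSS_0$ for all $t\ge 0$, hence $\mathbf{X}_t\in\ulab^{-1}(\SSS_0)=\SSSS_0$ for all $t$. So each $\mathbf{P}_{\mathbf{s}}$ is the law of a continuous $\SSSS_0$-valued process started at $\mathbf{s}$, and it only remains to prove the strong Markov property of the family.

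For the strong Markov property I would argue that the natural filtration $\{\mathcal{F}_t\}$ of $\mathbf{X}$ is the increasing limit over $k$ of the natural filtrations of the $\mathsf{X}^k$, so a bounded $\{\mathcal{F}_t\}$-stopping time $\tau$ --- reduced in the usual way to discretely-valued stopping times by monotone approximation from above --- can be treated level by level: the strong Markov property of each $\mathsf{X}^k$ together with the consistency over $k$ would give $\mathbf{E}_{\mathbf{s}}[\,F(\mathbf{X}_{\tau+\cdot})\mid\mathcal{F}_\tau\,]=\mathbf{E}_{\mathbf{X}_\tau}[\,F(\mathbf{X}_{\cdot})\,]$ for cylinder functionals $F$ that depend on finitely many coordinates and on the residual field, and a monotone-class argument would extend this to all bounded measurable $F$ on path space. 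This step also requires the measurability of $\mathbf{s}\mapsto\mathbf{P}_{\mathbf{s}}$ on $\SSSS_0$, which I would deduce from the measurable dependence of the Dirichlet-form diffusions on their starting points and of the construction in \tref{l:31}, and it uses \eqref{:26t} to guarantee that, started from any $\mathbf{s}\in\SSSS_0$, the process remains in $\SSSS_0$, i.e.\ in the region where the identification with the $\mathsf{X}^k$ is valid.

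The hard part will be exactly this last step: controlling the infinite-dimensional natural filtration and lifting the strong Markov identity from the ``quasi-everywhere starting point'' statement that quasi-regular Dirichlet form theory provides to one valid at \emph{every} $\mathbf{s}\in\SSSS_0$, while keeping all the consistency relations between the $k$-labeled processes intact under stopping. The two facts that make this upgrade possible are the non-collision property (A.4), which makes the unlabeled-to-labeled correspondence well defined and measurable, and the invariance \eqref{:26t} of $\SSSS_0$ under the dynamics.
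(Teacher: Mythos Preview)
The paper does not prove this statement at all: \tref{l:32} is simply quoted as \cite[Theorem 27]{o.isde}, with no argument given here. So there is nothing to compare your proposal against in this paper; the intended ``proof'' is the citation.

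That said, your sketch is a reasonable outline of the mechanism behind the cited result: the $k$-labeled Dirichlet forms $(\Eak,\dak,\Lmuk)$ supply quasi-regular diffusions, consistency over $k$ ties them together, and \tref{l:31} (itself quoted from \cite{o.isde}) provides the continuity and the confinement \eqref{:26t} to $\SSSS_0$. You correctly flag the genuinely delicate point --- upgrading from the quasi-everywhere statements native to Dirichlet form theory to a family $\{\mathbf{P}_{\mathbf{s}}\}$ defined for \emph{every} $\mathbf{s}\in\SSSS_0$ with the strong Markov property --- and the ingredients you name (non-collision via \Ass{A.4}, non-explosion via \Ass{A.5}, the invariance \eqref{:26t}) are indeed the ones used in \cite{o.isde,o.tp} to carry this out. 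If you want to actually write a self-contained proof rather than a citation, you would need to reproduce that upgrade carefully; as a review matter, though, the paper's own treatment is just the reference.
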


We take $ \mu = \mub $. 
Then the assumptions \Ass{A.1}, \Ass{A.4}, and \Ass{A.5} are easily checked as we see in the next lemma. 
\begin{lem} \label{l:33}
$ \mub $ satisfy \Ass{A.1}, \Ass{A.4}, and \Ass{A.5}.
\end{lem}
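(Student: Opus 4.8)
The plan is to verify the three assumptions \textbf{(A.1)}, \textbf{(A.4)}, and \textbf{(A.5)} for $\mu = \mub$ one at a time, using only standard facts about the Bessel kernel and determinantal point fields together with the explicit asymptotics of the Bessel function $J_{\alpha}$. For \textbf{(A.1)}, recall that $\mub$ is a determinantal random point field with the continuous kernel $\Ka$, so by \eqref{:10a} the $k$-point correlation function is $\rho^k(x_1,\dots,x_k) = \det[\Ka(x_i,x_j)]_{1\le i,j\le k}$; since $\Ka$ is continuous on $\SS\times\SS = \ZI\times\ZI$, it is bounded on compact sets, and hence the Hadamard bound $|\det[\Ka(x_i,x_j)]|\le k^{k/2}\,(\sup_{K\times K}|\Ka|)^k$ gives local boundedness of $\rho^k$ for every $k\in\N$. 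I would state this in a line or two.

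For \textbf{(A.4)}, the goal is $\mathrm{Cap}^{\mub}(\{\SSSsi\}^c) = 0$; here $\{\SSSsi\}^c$ consists of configurations either having a multiple point or being finite. Since $\mub$ is determinantal with a locally trace-class kernel that is locally Lipschitz (indeed real-analytic) off the diagonal and has a continuous diagonal, the standard criterion (e.g. \cite{o.isde}, or the criterion used for the sine/Airy cases) applies: a sufficient condition is that $\rho^1$ is locally bounded and $\rho^2(x,y)$ vanishes appropriately as $y\to x$ together with a mild local Hölder/Lipschitz bound on $\rho^2$ near the diagonal. From \eqref{:10b}--\eqref{:10bb} one checks $\rho^2(x,y) = \Ka(x,x)\Ka(y,y) - \Ka(x,y)^2$ which is $O((x-y)^2)$ near the diagonal because $\Ka$ is $C^1$ there; infiniteness of the number of points ($\sss(\SS)=\infty$ a.s.) follows from $\int_0^\infty \Ka(x,x)\,dx = \infty$, which in turn follows from the known large-$x$ behavior $\Ka(x,x)\sim \frac{1}{2\pi\sqrt{x}}$ (from the oscillatory asymptotics of $J_\alpha(\sqrt{x})$). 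I would invoke the capacity criterion from \cite{o.isde} as a black box and just verify its hypotheses.

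For \textbf{(A.5)}, I need a $T>0$ such that for every $R>0$,
\[
\liminf_{r\to\infty}\Bigl(\Bigl\{\int_{|x|\le r+R}\rho^1(x)\,dx\Bigr\}\Bigl\{\int_{r/\sqrt{(r+R)T}}^{\infty} e^{-u^2/2}\,du\Bigr\}\Bigr) = 0 .
\]
Using $\rho^1(x) = \Ka(x,x)$ and the asymptotics $\Ka(x,x)\sim \frac{1}{2\pi\sqrt{x}}$ as $x\to\infty$ (with $\Ka(x,x)$ bounded near $0$ since it is continuous), the first factor is $\int_0^{r+R}\Ka(x,x)\,dx = O(\sqrt{r+R}) = O(\sqrt{r})$. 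The second factor is a Gaussian tail $\int_a^\infty e^{-u^2/2}\,du$ with $a = r/\sqrt{(r+R)T}\sim \sqrt{r/T}\to\infty$, so it is $O(e^{-r/(2(r+R)T)}) = O(e^{-c r})$ for some $c = c(T,R)>0$ and all large $r$. The product is then $O(\sqrt{r}\,e^{-cr})\to 0$, so in fact the $\liminf$ is a genuine limit equal to $0$ for any fixed $T>0$ (say $T=1$). I would write this cleanly: fix $T=1$, bound $\rho^1$ above, and estimate the Gaussian tail.

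The main obstacle is not \textbf{(A.1)} or \textbf{(A.5)}, which reduce to elementary bounds plus the $\Ka(x,x)\asymp x^{-1/2}$ asymptotic, but \textbf{(A.4)}: one must cite and correctly apply the capacity/non-collision criterion from \cite{o.isde}, and verify both that multiple points have capacity zero (via the diagonal vanishing $\rho^2(x,y) = O((x-y)^2)$ plus a local Lipschitz estimate on $\Ka$, which one gets from differentiating the explicit formula \eqref{:10b}) and that $\mub$-a.s. configurations are infinite (via $\int_0^\infty\Ka(x,x)\,dx = \infty$). These are exactly the ingredients established for the sine and Airy fields, so the argument is parallel; I would present it by checking the hypotheses of the cited criterion rather than reproving the capacity estimate from scratch.
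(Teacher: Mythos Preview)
Your argument is correct and follows the same route as the paper: verify each of the three conditions directly from explicit properties of the Bessel kernel. The paper is considerably more terse---for \textbf{(A.1)} and \textbf{(A.5)} it just notes that $\Ka$ is locally bounded on $(0,\infty)$ and bounded on $[1,\infty)$ (so the cruder estimate $\int_0^{r+R}\rho^1(x)\,dx = O(r)$ already suffices for \textbf{(A.5)}; your sharper $O(\sqrt{r})$ from $\Ka(x,x)\sim (2\pi\sqrt{x})^{-1}$ is unnecessary), and for \textbf{(A.4)} it cites \cite[Theorem~2.1]{o.col} rather than \cite{o.isde}, invoking only the local Lipschitz continuity of $\Ka$ rather than the more detailed diagonal vanishing of $\rho^2$ you describe. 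One small slip to fix: in your \textbf{(A.5)} computation the Gaussian exponent should be $-r^2/(2(r+R)T)$, not $-r/(2(r+R)T)$ (the latter tends to a nonzero constant); your final conclusion $O(e^{-cr})$ is nonetheless correct once the square is restored.
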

\aaaaa 
 \Ass{A.1} and \Ass{A.5} are clear because 
 the correlation functions $\{\rbn \}$ of $\mub $ 
 are given by the equation \eqref{:10a} 
 and the kernels $ \Kan $ are locally bounded in $ (0,\infty)$ 
 and bounded in $ [1,\infty)$. 
 \Ass{A.4} follows from \cite[Theorem 2.1]{o.col} 
 because the kernel $ \Kan $ is locally Lipschitz continuous. 
\bbbbb 

We next deduce 
Theorems \ref{l:21} and \ref{l:22} from 
Theorems \ref{l:23} and \ref{l:24}.

\bigskip 

\noindent {\em Proof of \tref{l:21} \thetag{1} and \tref{l:22}. } 
We will use Theorems \ref{l:31} and \tref{l:32} to prove 
\tref{l:21} \thetag{1} and \tref{l:22}. 
For this we check the assumptions \Ass{A.1}--\Ass{A.5} 
with a help of Theorems \ref{l:23} and \ref{l:24}. 

The assumption \Ass{A.2} follows from \tref{l:23}. %
From \lref{l:33} we have already known that 
$ \mub $ satisfy \Ass{A.1}, \Ass{A.4}, and \Ass{A.5}. 

From \tref{l:24} we see that $ \mub $ are quasi-Gibbssian with continuous potentials. 
In \cite[Lermma 3.6]{o.rm}, it was proved that, 
when potentials are upper semi-continuous, 
the closability in \Ass{A.3} for $ k=0 $ follows 
from the quasi-Gibbs property. 
Then we have \Ass{A.3} for $ k=0 $. 
The closability for general $ k \ge 1 $ also follows from the quasi-Gibbs property of $ \mub $ in a similar fashion. 
Hence we obtain \Ass{A.3} for $ \mub $. 

We have thus seen that the assumptions \Ass{A.1}--\Ass{A.5} 
are fulfilled. Hence, \tref{l:21} \thetag{1} and \tref{l:22} follows from 
Theorems \ref{l:31} and \ref{l:32}, respectively. 
\qed

\bigskip 

We next prove \thetag{2} of \tref{l:21} using a result in \cite{o-t.tail}. 

\noindent 
\noindent {\em Proof of \tref{l:21} \thetag{2}. } 
We deduce \tref{l:21} \thetag{2} from Theorem 9.3 in \cite{o-t.tail}. 
We check the assumptions in \cite[Theorem 9.3]{o-t.tail}. 
These are labeled in \cite{o-t.tail} as follows: 
\As{A1}--\As{A4}, \As{A5'}, \As{A8'}, \As{A9}, 
\As{E1}, \As{F1}, and \As{F2}.

We see that \As{A3}, \As{A1}, \As{A4}, and \As{A5'} in \cite{o-t.tail} follow from 
\As{A.1}, \As{A.2}, \As{A.4}, and \As{A.5} in the present paper, respectively.  
We deduce \As{A2} in \cite{o-t.tail} from \tref{l:24} immediately. 
\As{A8'} follows from \eqref{:10bb}.  
Assumption \As{A9} in \cite{o-t.tail} asserts that $ \mub $ is tail trivial. 
This was proved in \cite{o-o.tt}. 
%

For $ \mathsf{k},r \in \N $, let $ a_{\mathsf{k}}(r) = \mathsf{k} \sqrt{r}$. 
Set $ a_{\mathsf{k}} = \{ a_{\mathsf{k}} (r)\}_{r\in\N } $ and 
$ \mathbf{a} = \{ a_{\mathsf{k}} \}_{\mathsf{k}\in\N } $. 
Let $ \mathsf{K}[\mathbf{a}] = \cup_{r=1}^{\infty} 
\mathsf{K}[a_{\mathsf{k}}]$, where  
$ \mathsf{K}[a_{\mathsf{k}}]= 
\{ \mathsf{s}; \mathsf{s}(\Sr ) \le a_{\mathsf{k}}(r) \text{ for all } r \in \N  \} $. 
We then deduce from \eqref{:10bb} that 
\begin{align}& \notag 
\mub (\mathsf{K}[\mathbf{a}]) = 1 
.\end{align}
This corresponds to \As{E1} in \cite{o-t.tail}. 
Assumptions \As{F1} and \As{F2} depend on 
a nonnegative integer $ \ell $ in \cite[Theorem 9.3]{o-t.tail}. 
We take $ \ell = 1 $ here. 
We can easily check \As{F1} and \As{F2} by a straightforward calculation. 
Indeed, $ \sigma $ and $ b $ in \As{F1} and \As{F2} 
in \cite{o-t.tail} become 
$ \sigma = 1 $ and $ b = 2\dlog ^{\mub } $. Hence, $ \partial_x \sigma = 0 $ and, 
from \tref{l:23},  
\begin{align*}&
\partial_x b (x,\mathsf{s}) = -\frac{2\alpha }{x^2} -  
\sum_{i\in\N } \frac{4}{(x-s_i)^2}
.\end{align*}
This implies \As{F1} and \As{F2} immediately. 
We thus complete the proof. 
\qed 

\bigskip 

In the rest of the paper we devote to the proof of Theorems \ref{l:23} and \ref{l:24}.

\section{Logarithmic derivative of random point fields. } \label{s:4}

Let $ \mu $ be a probability measure on $ \SSS $ with locally bounded 
$ n $-point correlation function $ \rho ^{n}$ for each $ n \in \N $. 
Let $ \muone $ be the measure defined by \eqref{:22y} with $ k=1 $. 
In this section we present a sufficient condition for the existence of 
the logarithmic derivative $ \dmu $ in $ \Llocp (\muone )$. 

Let $ \Sr = \{ x \in \SS \, ;\, |x| < \rrr \}$ and 
$\Sr ^{n}$ denote the $ n$-product of $ \Sr $. 
Here and after, $ \cdot ^n$ denotes the $ n$-product of the set $ \cdot $. 
Let $ \{ \muN \} $ be a sequence of probability measures on $ \SSS $. 
We assume that their $ n $-point correlation functions $ \{\rho ^{\n ,n}\} $ 
satisfy for each $ r\in \N $ 
\begin{align} \label{:40a}&
\limi{\n } \rho ^{\n ,n} (\mathbf{x})= 
\rho ^{n} (\mathbf{x}) \quad \text{ uniformly on $\Sr ^{n}$}  
,\\\label{:40b}&
\sup_{ \n \in\N } \sup_{\mathbf{x}\in\Sr ^{n}} \rho ^{\n ,n} (\mathbf{x}) \le 
\cref{;40b} ^{-n} n ^{\cref{;40c}n}
,\end{align}
where $ 0 < \Ct \label{;40b}(r) < \infty $ and 
$ 0 < \Ct \label{;40c}(r)< 1 $ are constants independent of $ n \in \N $. 

Let $\map{\g }{\SS ^{2} }{\R }$ be measurable functions. 
For $ (x,\mathsf{y}) \in \SS \times \SSS $ and $ s > 0 $ we set 
\begin{align} & \label{:40c}
\ggNs (x,\mathsf{y}) = \sum_{|x-y_i|< s }\gN (x,y_i)
,\quad 
\rrNs (x,\mathsf{y}) =  \sum_{s \le |x-y_i|}\gN (x,y_i) 
,\end{align}
where $ \mathsf{y}=\sum_{i}\delta_{y_i}$. 
As for $ \rrNs $, we define only for $ \mathsf{y}$ such that 
$ \mathsf{y} (\SS ) < \infty $ in order to make the sum 
$\rrNs (x,\mathsf{y}) = \sum_{s \le |x-y_i|}\gN (x,y_i)$ finite. 
We note that $ \ggNs + \rrNs $ are independent of $ s $. 

Let $ \map{\uu , \uN }{\SS }{\R }$ and $ 1 < \hat{p} < \infty $. 
Assume that $  \muN $ has 
a logarithmic derivative $ \dmuN $ for each $ \n $ satisfying the following. 
\begin{align}\label{:40d}&
\dmuN (x,\mathsf{y})= \uN (x) + \ggNs (x,\mathsf{y}) + \rrNs (x,\mathsf{y})
,\\\label{:40e}&
\limi{\n }\uN =\uu \quad \text{ in } L^{\hat{p}}_{\mathrm{loc}}(\SS ,dx)
,\\\label{:40f}&
\limi{s}\limsupi{\n } 
  \int_{\Sr \ts \SSS } |\rrNs (x,\mathsf{y}) |^{\hat{p}}  d\muNone = 0 
.\end{align}

We quote: 
\begin{thm}[{\cite[Theorem 45]{o.isde}}] \label{l:41} 
Let $ 1 < p < \hat{p} $. 
Assume \eqref{:40a}--\eqref{:40f}. 
Then the logarithmic derivative $\dmu $ exists in $ \Llocp (\muone )$ and is given by 
\begin{align}\label{:41a}&
\dmu (x,\mathsf{y})= \uu (x) + \limi{s} \ggs (x,\mathsf{y})   
.\end{align}
The convergence $ \lim \ggs $ takes place in $ \Llocp (\muone )$. 
\end{thm}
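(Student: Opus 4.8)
\medskip
\noindent\textbf{Proof proposal for \tref{l:41}.}
The plan is to verify the defining identity \eqref{:22z} directly for the candidate
$ \dmu (x,\mathsf{y}) = \uu (x) + \limi{s} \ggs (x,\mathsf{y}) $,
by passing to the limit $ \n \to \infty $ in the identity at level $ \n $. Since $ \dmuN $ is the logarithmic derivative of $ \muN $,
\[
\int_{\Sr \ts \SSS } \dmuN \, f \, d\muNone \;=\; - \int_{\Sr \ts \SSS } \partial_x f \, d\muNone
\]
for every $ f \in C^{\infty}_{0}((0,\infty)) \ot C_b(\SSS ) $ with $ x $-support in $ \Sr $. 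First I would record the consequences of \eqref{:40a}--\eqref{:40b}: locally uniform convergence of all correlation functions, together with the bound \eqref{:40b} whose exponent is strictly below $ 1 $ (forcing the local point counts to have uniformly bounded exponential moments, hence tightness and uniform integrability), yields convergence of the $ 1 $-Campbell measures in the sense that $ \int F \, d\muNone \to \int F \, d\muone $ for every $ F \in C_b(\Sr \ts \SSS ) $, as well as the uniform-integrability estimates needed below to pass to the limit against the unbounded local functionals that occur. In particular the right-hand side above converges to $ - \int \partial_x f \, d\muone $.

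Next I would split $ \dmuN = \uN + \ggNs + \rrNs $ with $ s $ held fixed and large, and handle the three integrals. By \eqref{:40e}, the uniform local boundedness of $ \rho ^{\n ,1} $, and an approximation of $ \uu $ by continuous functions, $ \int \uN f \, d\muNone \to \int \uu f \, d\muone $. The far-field integral is uniformly negligible for large $ s $: by H\"older's inequality and \eqref{:40f},
\[
\limsupi{\n } \Bigl| \int \rrNs f \, d\muNone \Bigr| \;\le\; \| f \|_{\infty} \, \muone (\Sr \ts \SSS )^{1/\hat{p}'} \, \limsupi{\n } \| \rrNs \|_{L^{\hat{p}}(\Sr \ts \SSS , \muNone )} ,
\]
whose right-hand side tends to $ 0 $ as $ s \to \infty $. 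For the near-field integral I claim that for each fixed $ s $ the finite-system sums $ \ggNs $ converge to their limiting counterparts $ \ggs $ strongly enough that $ \int \ggNs f \, d\muNone \to \int \ggs f \, d\muone $. Granting this, the three limits combine to give, for each fixed $ s $,
\[
\Bigl| \int \ggs f \, d\muone + \int \bigl( \partial_x f + \uu f \bigr) d\muone \Bigr| \;\le\; \varepsilon (s) , \qquad \varepsilon (s) \to 0 \ \text{ as } s \to \infty .
\]

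It remains to show that $ \limi{s} \ggs $ exists in $ \Llocp (\muone ) $. For $ t < s $ the increment $ \ggs - \mathsf{g}_t $ is the sum of $ \g (x,y_i) $ over the annulus $ \{ t \le |x-y_i| < s \} $: a bounded-region, $ \muone $-a.e.\ continuous functional, so Campbell-measure convergence and the uniform bounds let one identify its $ L^{\hat{p}}(\Sr \ts \SSS , \muone ) $-norm with $ \limi{\n } $ of the corresponding annular norms under $ \muNone $, which are in turn bounded by $ \limsupi{\n } $ of the far-field $ L^{\hat{p}}(\muNone ) $-norms at scales $ t $ and $ s $; by \eqref{:40f} these vanish as $ t\wedge s \to \infty $. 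Hence $ \{ \ggs \}_{s} $ is Cauchy in $ L^{\hat{p}}(\Sr \ts \SSS , \muone ) $ for every $ r $, and therefore converges there, a fortiori in $ \Llocp (\muone ) $; denote the limit $ \mathsf{g}_{\infty} $. Letting $ s \to \infty $ in the previous display and using $ \int \ggs f \, d\muone \to \int \mathsf{g}_{\infty} f \, d\muone $, we obtain $ \int ( \uu + \mathsf{g}_{\infty} ) f \, d\muone = - \int \partial_x f \, d\muone $. Thus $ \dmu := \uu + \mathsf{g}_{\infty} \in \Llocp (\muone ) $ is the logarithmic derivative of $ \mu $, which is \eqref{:41a}.

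The step I expect to be the main obstacle is the near-field convergence $ \int \ggNs f \, d\muNone \to \int \ggs f \, d\muone $ for fixed $ s $. Unlike the far part, $ \ggNs $ carries the singularity of $ \g $ along the diagonal $ x = y $, so $ \ggNs f $ is an unbounded functional and weak convergence of the Campbell measures does not apply directly. One must establish uniform integrability of $ |\ggNs f|^{p} $ under $ \muNone $, which is where the decomposition \eqref{:40d} --- tying $ \ggNs $ back to $ \dmuN , \uN $, and $ \rrNs $ --- has to be combined with the moment bound \eqref{:40b}; the resulting loss of a uniform-integrability margin is exactly why the conclusion is stated for exponents $ p $ strictly below $ \hat{p} $ rather than at $ \hat{p} $. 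Once uniform integrability is available, a Vitali-type argument --- truncate $ \g $ near the diagonal, transfer the bounded-continuous truncated functional by Campbell-measure convergence, and bound the truncation error uniformly in $ \n $ --- completes this step and the proof.
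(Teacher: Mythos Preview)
The paper does not prove \tref{l:41}: it is stated there as a quotation of \cite[Theorem~45]{o.isde}, with no argument given beyond \rref{r:41}. So there is no ``paper's own proof'' to compare against; whatever detailed verification you want must be extracted from \cite{o.isde}.

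That said, your outline tracks the architecture of the result correctly: pass the defining identity \eqref{:22z} from level $\n$ to the limit using \eqref{:40a}--\eqref{:40b} for Campbell-measure convergence, \eqref{:40e} for the free part, and \eqref{:40f} for the tail; then upgrade the resulting weak statement to $\Llocp$ convergence of $\ggs$. Two remarks. First, in the present setting $\gN = \g$ is independent of $\n$, so the ``near-field convergence'' step is purely a change of measure, $\int \ggs f\, d\muNone \to \int \ggs f\, d\muone$, for a single unbounded functional $\ggs$; the issue is exactly the uniform integrability you flag, and the loss from $\hat p$ to $p$ enters there. Second, your Cauchy argument is clean once you note that for finite configurations $\ggs - \mathsf{g}_t = \rrN_t - \rrNs$, so the $L^{\hat p}(\muNone)$-norm of the increment is controlled directly by \eqref{:40f}; passing this to $\muone$ uses that the annular functional avoids the diagonal and involves only particles in a bounded window, where \eqref{:40b} supplies the needed moments. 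Your identification of the singular near-field step as the crux, and of the $p<\hat p$ gap as its price, is accurate.
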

\begin{rem}\label{r:41}
\tref{l:41} is a special case of \cite[Theorem 45]{o.isde}. 
In \cite[Theorem 45]{o.isde} extra terms such as $ \vN $ and $ \www $ appeared. These terms are vanished here. However, this is not the case for 
the Ginibre random point field and the Airy random point fields. 
\end{rem}

In practice, to check the condition \eqref{:40f} is the most hard part of the proof. 
So we quote a sufficient condition for this in terms of correlation functions. 
\begin{lem} \label{l:42}
We set  $ \SS _{s\infty}^{x} = \{ y \in \SS ;s \le |x-y| < \infty \}$. 
Let $ \rN _{x}$ be the $ n$-point correlation function 
of the reduced Palm measure $ \muNx $. 
Then \eqref{:40f} with $ \hat{p} = 2 $ follows from the following: 
\begin{align}
\label{:42A}&
\2  
|\int_{\SS _{s\infty}^{x}} \gN (x,y)\rNone (y) dy | = 0
,\\ \label{:42B}&
\2  |\int_{\SS _{s\infty}^{x}}\gN (x,y)
\{ \rNone _{x} (y)-\rNone (y)\}dy | = 0
,\\\label{:42C}& 
\2  |\int_{\SS _{s\infty}^{x}}|\gN (x,y)|^{2} \rNone (y)dy 
\\ \notag &\quad \quad \quad \quad \quad \quad \quad \quad \quad \quad \quad 
- \int_{(\SS _{s\infty}^{x})^{2}} 
 \gN (x,y)\cdot \gN (x,z)\rNtwo (y,z) dydz |= 0 
,\\%
\label{:42D}&
\2  
|\int_{\SS _{s\infty}^{x}}|\gN (x,y)|^{2} \{ \rNone _{x} (y)-\rNone (y)\} dy
\notag \\ & \quad \quad
-\int_{(\SS _{s\infty}^{x})^{2} }
\gN (x,y)\cdot \gN (x,z) \{ \rNtwo _{x} (y,z)-\rNtwo (y,z)\}dydz |= 0 
.\end{align}
\end{lem}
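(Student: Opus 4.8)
The plan is to disintegrate the $1$-Campbell measure $\muNone$ into reduced Palm measures, reduce \eqref{:40f} to a bound on the conditional second moment of $\rrNs$ that is uniform in the base point, and then close that bound using the four hypotheses together with the determinantal structure of $\muN$. Fix $r\in\N$ and recall that $\2$ denotes $\limi{s}\limsupi{\n}\sup_{x\in\Sr}$. By the definition \eqref{:22y} of $\muNone$,
\begin{align*}
\int_{\Sr\ts\SSS}|\rrNs(x,\mathsf{y})|^{2}\,d\muNone
=\int_{\Sr}\Bigl(\int_{\SSS}|\rrNs(x,\mathsf{y})|^{2}\,\muNx(d\mathsf{y})\Bigr)\rNone(x)\,dx .
\end{align*}
Since $\sup_{\n}\sup_{x\in\Sr}\rNone(x)<\infty$ by \eqref{:40b} and $\Sr$ has finite Lebesgue measure, \eqref{:40f} with $\hat{p}=2$ follows once we show $\2\int_{\SSS}|\rrNs(x,\mathsf{y})|^{2}\,\muNx(d\mathsf{y})=0$.

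I would first express this conditional second moment through correlation functions. Expanding the square of $\rrNs(x,\mathsf{y})=\sum_{s\le|x-y_i|}\gN(x,y_i)$, separating the $i=j$ terms from the $i\ne j$ terms, and using the definition \eqref{:20c} of the $1$- and $2$-point correlation functions $\rNone_x,\rNtwo_x$ of $\muNx$,
\begin{align*}
\int_{\SSS}|\rrNs(x,\mathsf{y})|^{2}\,\muNx(d\mathsf{y})
&=\int_{\SS_{s\infty}^{x}}|\gN(x,y)|^{2}\rNone_x(y)\,dy \\
&\quad +\int_{(\SS_{s\infty}^{x})^{2}}\gN(x,y)\gN(x,z)\rNtwo_x(y,z)\,dy\,dz .
\end{align*}
Since $\int_{\SS_{s\infty}^{x}}\gN(x,y)\rNone_x(y)\,dy=\int_{\SS_{s\infty}^{x}}\gN(x,y)\rNone(y)\,dy+\int_{\SS_{s\infty}^{x}}\gN(x,y)\{\rNone_x(y)-\rNone(y)\}\,dy$, hypotheses \eqref{:42A} and \eqref{:42B} give $\2\,\bigl|\int_{\SS_{s\infty}^{x}}\gN(x,y)\rNone_x(y)\,dy\bigr|=0$, so the squared $\muNx$-mean of $\rrNs(x,\cdot)$ is $\2$-negligible, and it remains to control the $\muNx$-variance $V_{s}^{\n}(x):=\int_{\SSS}|\rrNs(x,\mathsf{y})|^{2}\,\muNx(d\mathsf{y})-\bigl(\int_{\SSS}\rrNs(x,\mathsf{y})\,\muNx(d\mathsf{y})\bigr)^{2}\ge0$.

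Here I would use that $\muN$ — hence its reduced Palm measure $\muNx$ — is determinantal with a Hermitian kernel $K^{\n}_{x}$ satisfying $0\le K^{\n}_{x}\le\mathrm{Id}$; in the applications of the lemma $\muN=\mub^{\n}$ is a finite Laguerre ensemble, so this holds. The determinantal identity $\rNone_x(y)\rNone_x(z)-\rNtwo_x(y,z)=|K^{\n}_{x}(y,z)|^{2}$, combined with the fact that $(y,z)\mapsto|K^{\n}_{x}(y,z)|^{2}$ is positive semidefinite as a kernel (Schur product theorem), gives simultaneously the upper bound $0\le V_{s}^{\n}(x)\le\int_{\SS_{s\infty}^{x}}|\gN(x,y)|^{2}\rNone_x(y)\,dy$ and the negative-correlation bound $\int_{(\SS_{s\infty}^{x})^{2}}\gN(x,y)\gN(x,z)\rNtwo_x(y,z)\,dy\,dz\le\bigl(\int_{\SS_{s\infty}^{x}}\gN(x,y)\rNone_x(y)\,dy\bigr)^{2}$, valid even though $\gN(x,\cdot)$ is not sign-definite. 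To control the integral $\int|\gN|^{2}\rNone_x\,dy$, I add and subtract the non-Palm correlation functions, $\int|\gN|^{2}\rNone_x\,dy=\int|\gN|^{2}\rNone\,dy+\int|\gN|^{2}\{\rNone_x-\rNone\}\,dy$, and apply \eqref{:42C} to the first summand and \eqref{:42D} to the second to obtain $\int|\gN|^{2}\rNone_x\,dy=\int\!\int\gN\gN\rNtwo_x\,dy\,dz+R(s,\n,x)$ with $\2\,|R|=0$. Combining with the negative-correlation bound, $\int|\gN|^{2}\rNone_x\,dy\le\bigl(\int\gN\rNone_x\,dy\bigr)^{2}+R$; the right-hand side has vanishing $\2$ by \eqref{:42A}--\eqref{:42D}, and the left-hand side is nonnegative, so $\2\int_{\SS_{s\infty}^{x}}|\gN(x,y)|^{2}\rNone_x(y)\,dy=0$. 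Then $\2\,V_{s}^{\n}(x)=0$, and together with the $\2$-negligible squared mean this gives $\2\int_{\SSS}|\rrNs(x,\mathsf{y})|^{2}\,\muNx(d\mathsf{y})=0$, which by the first paragraph yields \eqref{:40f}.

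The step I expect to be the main obstacle is the passage from hypotheses \eqref{:42A}--\eqref{:42D}, which only identify various integrals with one another up to $\2$-negligible remainders and control the first moment, to genuine decay of the conditional second moment: by themselves the four conditions do not force this for an arbitrary point process. What closes the argument is the one-sided sandwich $0\le\int_{\SS_{s\infty}^{x}}|\gN(x,y)|^{2}\rNone_x(y)\,dy\le\bigl(\int_{\SS_{s\infty}^{x}}\gN(x,y)\rNone_x(y)\,dy\bigr)^{2}+R$ with $\2\,|R|=0$, whose upper half is precisely the negative-correlation (Schur-positivity) property of the determinantal measures $\muN,\muNx$ and whose lower half is the trivial nonnegativity of the integrand. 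Identifying this mechanism, then carrying every estimate uniformly in $x\in\Sr$ so that the nested $\limi{s}\limsupi{\n}\sup_{x}$ structure is respected, and checking the routine integrability of $\gN(x,\cdot)$ against $\rNone,\rNone_x$ on $\SS_{s\infty}^{x}$ that legitimizes the correlation-function expansions and Schur bounds, constitute the substance of the proof.
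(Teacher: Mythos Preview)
The paper does not actually argue this lemma: its entire proof is the single sentence ``\lref{l:42} is a special case of \cite[Lemma~52]{o.isde}'', so there is nothing in the paper to compare your argument against line by line.

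Your proof is correct, but it establishes a strictly narrower statement than the one written in \sref{s:4}. The lemma as formulated places no structural hypothesis on $\muN$ beyond \eqref{:40a}--\eqref{:40e}, whereas your closing step uses that $\muN$ and $\muNx$ are determinantal with Hermitian kernels, via the Schur--positivity inequality
\[
\int_{(\SS_{s\infty}^{x})^{2}}\gN(x,y)\,\gN(x,z)\,\rNtwo_x(y,z)\,dy\,dz
\;\le\;\Bigl(\int_{\SS_{s\infty}^{x}}\gN(x,y)\,\rNone_x(y)\,dy\Bigr)^{2}.
\]
You correctly identify why some such input is needed: the hypotheses \eqref{:42A}--\eqref{:42D} only tell you that $\int|\gN|^{2}\rNone_x-\int\!\int\gN\gN\,\rNtwo_x$ and $\int\gN\,\rNone_x$ are $\2$-negligible, which by themselves do not force the \emph{sum} $\int|\gN|^{2}\rNone_x+\int\!\int\gN\gN\,\rNtwo_x=\int_{\SSS}|\rrNs|^{2}\,d\muNx$ to be small; your sandwich $0\le\int|\gN|^{2}\rNone_x\le(\int\gN\,\rNone_x)^{2}+|R|$ is precisely what converts those difference estimates into decay of the second moment. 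Since every $\muN$ appearing in this paper is a finite Laguerre (hence determinantal) ensemble, your restricted version is exactly what is required for Lemmas~\ref{l:52}--\ref{l:55} and the proof of \tref{l:23}. So the argument is sound and self-contained for the paper's purposes; just flag that you have proved \lref{l:42} under the extra hypothesis that $\muN$ is determinantal with a Hermitian kernel, rather than in the generality in which it is stated.
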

\aaaaa 
\lref{l:42} is a special case of \cite[Lemma 52]{o.isde}.  
\bbbbb

\section{Finite particle approximations and proof of \tref{l:23}} \label{s:5}
In this section, we prove \tref{l:23}. 
For this we use \tref{l:41}. 
We will check the assumptions \eqref{:40a}--\eqref{:40f} posed in 
\tref{l:41}. 

We begin by giving finite particle approximations for 
Bessel random point fields. 
Let $ \{ L^{[\alpha ]}_{n} \} $ denote the Laguerre polynomials. 
Then by definition 
\begin{align}\label{:51a}&
 L^{[\alpha ]}_n(x) =\sum^n_{m=0} 
(-1)^m \binom{n+\alpha }{n-m}\frac{x^m}{m!}
.\end{align}
The associated monic polynomials $ \{ p^{[\alpha ]}_{n} \} $ are given by  
\begin{align}\label{:51aa}&
p^{[\alpha ]}_n (x) = (-1)^n n! L^{[\alpha ]}_n(x) 
= (-1)^n  \Gamma( n +1) L^{[\alpha ]}_n(x) 
.\end{align}
Let $ \w (x) = x^{\alpha }e^{-x}$. 
Then it is known \cite[301p, 302p]{sansone} that for 
$ m,n \in \{ 0 \}\cup \mathbb{N} $
\begin{align}\label{:51b}&
 \int_{0}^{\infty}
 L^{[\alpha ]}_m (x)  L^{[\alpha ]}_n (x) \, \w (x) dx = 
  \delta_{m,n}
 \frac{\Gamma( n+\alpha +1)}{\Gamma(n+1)}
 .\end{align}
From \eqref{:51aa} and \eqref{:51b} we immediately deduce that 
\begin{align}\label{:51B}&
 \int_{0}^{\infty} 
 p^{[\alpha ]}_{\n -1} (x) ^2  \, \w (x) dx = 
 \Gamma( \n +\alpha ) \Gamma( \n )
.\end{align}

Let $ \map{\kan }{(0,\infty)^2}{\mathbb{R}}$ be such that 
\begin{align}\label{:51C}&
 \kan (x,y)= 
 \sqrt{\w (x)\w (y)} \sum ^{\n -1}_{m = 0} 
 \frac{p^{[\alpha ]}_m  (x) p^{[\alpha ]}_m  (y) }
 {\int_{0}^{\infty} p^{[\alpha ]}_m  (z) ^2 \w (z)dz  }
.\end{align}
Then we deduce from 
the Christoffel-Darboux formula \cite[Proposition 5.1.3.]{forrester} 
that for $ x\not=y$ 
\begin{align}\label{:51D}&
\kan (x,y) =   
\frac{\sqrt{\w (x)\w (y)} }
{\int_{0}^{\infty} p^{[\alpha ]}_{\n -1}  (z) ^2 \w (z)dz }
 \frac
{ p^{[\alpha ]}_{\n }(x) p^{[\alpha ]}_{\n -1}  (y) - 
  p^{[\alpha ]}_{\n -1}(x) p^{[\alpha ]}_{\n }  (y)  } 
 {x-y}
.\end{align}
From \eqref{:51aa}--\eqref{:51B} combined with 
 a straightforward calculation, we obtain that 
\begin{align}\label{:51Zz}&
\kan (x,y) =  \sqrt{\w (x)\w (y)} 
 \frac{\Gamma (\n +1)} {\Gamma (\n + \alpha )} 
 \frac
{  L^{[\alpha ]}_{\n -1}(x) L^{[\alpha ]}_{\n }  (y) 
 - 
 L^{[\alpha ]}_{\n }(x)   L^{[\alpha ]}_{\n -1}  (y) } 
 {x-y}
.\end{align}

We now introduce the rescaled kernels $ \Kan $ as follows. 
\begin{align}\label{:51d}&
\Kan (x,y) = \frac{1}{4\n } \kan (\frac{x}{4\n },\frac{y}{4\n })
.\end{align}
Let $ \mubN $ be the determinantal random point field over $ (0,\infty)$ 
generated by $ (\Kan , dx)$. 
Then by construction the $ n$-point correlation functions of $ \mubN $ are given by 
\begin{align}\label{:51e}&
\rbN (x_1,\ldots,x_n) = \det [\Kan (x_i,x_j)]_{i,j=1,\ldots,n}
.\end{align}
It is known that $ \mubN (\sss (\SS )= \n )= 1 $, 
and so the labeled density function of 
$ \mubN $ is $ \rbNN $ up to the normalizing constant. Moreover, by the standard theory of the random matrix, we obtain, because of 
the calculation of  Vandermond determinant, 
\begin{align}\label{:51f}&
\rbNN (x_1,\ldots,x_{\n }) = 
c_{\n } e^{-\sum_{i=1}^{\n }x_i/4\n  } \prod_{j=1}^{\n }x_j^{\alpha }
\prod_{k<l}^{\n } |x_k-x_l|^{2 }
\end{align}
with the normalizing constant $ c_{\n }$ \cite{forrester}. 
We easily deduce from \eqref{:51f} that the logarithmic derivative 
$ \dlog ^{\mubN } $ of $ \mubN $ are given by 
\begin{align}\label{:51h}&
\dlog ^{\mubN }(x,\mathsf{y}) = 
-\frac{1}{4\n } + \frac{\alpha }{x} + \sum_{i=1}^{\n -1} \frac{2 }{x-y_i}
.\end{align}

\begin{lem} \label{l:51} 
 Let $ 1 \le \alpha < \infty $. 
 Then $ \{ \mubN \} $ satisfy \eqref{:40a}--\eqref{:40e} with 
\begin{align}\label{:51p}&
u^{\n } (x) = -\frac{1}{4\n } + \frac{\alpha }{x} , \quad 
g(x,y)= \frac{2}{x-y}
.\end{align}
\end{lem}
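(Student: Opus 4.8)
The plan is to verify the five convergence and bound conditions \eqref{:40a}--\eqref{:40e} for the rescaled Laguerre ensembles $\mubN$ with $u^{\n}$ and $g$ as in \eqref{:51p}. The last condition \eqref{:40e} is essentially immediate: since $u^{\n}(x) = -\tfrac{1}{4\n} + \tfrac{\alpha}{x}$, we have $u^{\n} \to u(x) = \tfrac{\alpha}{x}$ uniformly on every $\Sr$ with $r < \infty$ bounded away from $0$, hence certainly in $L^{\hat p}_{\mathrm{loc}}(\SS,dx)$ for any $\hat p$; the splitting $\dlog^{\mubN} = u^{\n} + \ggNs + \rrNs$ required by \eqref{:40d} is read off directly from \eqref{:51h} by grouping the sum $\sum_{i=1}^{\n-1} 2/(x-y_i)$ into the near part $|x-y_i| < s$ and the far part $|x-y_i| \ge s$. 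So the real content is the correlation-kernel asymptotics \eqref{:40a} and the uniform factorial-type bound \eqref{:40b}.

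For \eqref{:40a}, I would show that the rescaled Laguerre kernels $\Kan(x,y)$ converge to the Bessel kernel $\Ka(x,y)$ of \eqref{:10b} uniformly on $\Sr^2 = (0,r)^2$. Starting from the explicit formula \eqref{:51Zz} for $\kan$ in terms of Laguerre polynomials, the definition \eqref{:51d} gives $\Kan(x,y) = \tfrac{1}{4\n}\kan(\tfrac{x}{4\n},\tfrac{y}{4\n})$, and I would invoke the classical hard-edge (Mehler--Heine type) asymptotics for Laguerre polynomials, namely $\lim_{\n\to\infty}\n^{-\alpha} e^{-x/8\n}(x/4\n)^{\alpha/2} L^{[\alpha]}_{\n}(x/4\n)$ expressed through $J_\alpha(\sqrt x)$, with the limit holding uniformly on compacts of $(0,\infty)$ together with the analogous statement for $L^{[\alpha]}_{\n-1}$ and for the derivative combination. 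Feeding these into \eqref{:51Zz} and \eqref{:51d}, the weight factors $\sqrt{\w(x/4\n)\w(y/4\n)}$ and the Gamma-function ratio $\Gamma(\n+1)/\Gamma(\n+\alpha)$ combine with the scaling $1/(4\n)$ and the difference quotient $1/((x/4\n)-(y/4\n)) = 4\n/(x-y)$ to produce exactly the Bessel-kernel expression \eqref{:10b}; then $\rbN = \det[\Kan(x_i,x_j)]$ converges to $\rbn = \det[\Ka(x_i,x_j)]$ uniformly on $\Sr^n$ by continuity of the determinant. The case $x=y$ is handled by the corresponding confluent limit giving \eqref{:10bb}.

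For \eqref{:40b}, the Hadamard inequality for determinants gives $|\rbN(x_1,\ldots,x_n)| = |\det[\Kan(x_i,x_j)]| \le \prod_{i=1}^n (\sum_{j=1}^n |\Kan(x_i,x_j)|^2)^{1/2}$, but for positive-definite kernels (which $\Kan$ is, being a projection kernel) one has the sharper bound $\rbN(x_1,\ldots,x_n) \le \prod_{i=1}^n \Kan(x_i,x_i)$; hence it suffices to bound $\sup_{\n}\sup_{x\in\Sr}\Kan(x,x)$ by a constant $\cref{;40b}(r)^{-1}$, say, and then $\rbN \le \cref{;40b}^{-n} \le \cref{;40b}^{-n} n^{\cref{;40c}n}$ trivially for any $0 < \cref{;40c} < 1$. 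The uniform diagonal bound on $\Kan(x,x)$ over $x \in (0,r)$ and all $\n$ follows from the uniform convergence $\Kan(x,x) \to \Ka(x,x)$ of the previous step together with continuity (and boundedness on $[0,r]$) of the Bessel diagonal $\Ka(x,x)$; near $x=0$ one uses that $J_\alpha(\sqrt x)^2 - J_{\alpha+1}(\sqrt x)J_{\alpha-1}(\sqrt x) = O(x^\alpha)$ stays bounded for $\alpha \ge 1$, and similarly the prelimit $\kan$ diagonals are controlled by the Laguerre sum in \eqref{:51C}.

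The main obstacle is making the Laguerre hard-edge asymptotics precise and, crucially, \emph{uniform} on compact sets of $(0,\infty)^2$ — both for the kernel itself and for the combination of consecutive-degree polynomials appearing in \eqref{:51Zz}, including the confluent ($x\to y$) case needed for the diagonal. One must track the interplay between the $e^{-x/8\n}$ and $(x/4\n)^{\alpha/2}$ prefactors, the $\Gamma$-ratio $\Gamma(\n+1)/\Gamma(\n+\alpha) \sim \n^{1-\alpha}$, and the $4\n$ scaling, and confirm that the leftover error terms vanish uniformly; this is standard in the random-matrix literature (it underlies the statement that Bessel point fields are the hard-edge limit of Laguerre ensembles) but requires care to cite or reprove with the uniformity we need. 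Once \eqref{:40a} and \eqref{:40b} are in hand, \eqref{:40c}--\eqref{:40e} are routine, and \lref{l:51} follows.
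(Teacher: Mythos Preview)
Your proposal is correct and follows essentially the same route as the paper: uniform convergence $\Kan\to\Ka$ on compacts (which the paper simply cites from Forrester) gives \eqref{:40a}, Hadamard's inequality gives \eqref{:40b}, and \eqref{:40c}--\eqref{:40e} are immediate from \eqref{:51h} and the explicit form of $u^{\n}$.

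The one small divergence is in \eqref{:40b}: the paper bounds the Euclidean norm of each row of $[\Kan(x_i,x_j)]$ by $\cref{;51}\,n^{1/2}$ and applies the general Hadamard inequality to get $\rbN\le \cref{;51}^n n^{n/2}$, whereas you exploit positive semi-definiteness of the kernel to use the sharper Hadamard bound $\det A\le\prod_i A_{ii}$, yielding $\rbN\le \big(\sup_{\n}\sup_{x\in\Sr}\Kan(x,x)\big)^n$ with no $n^{n/2}$ factor at all. Your version is slightly cleaner and makes the exponent $\cref{;40c}$ in \eqref{:40b} vacuous, but both arguments rest on the same input---the uniform diagonal bound coming from the compact-uniform convergence $\Kan\to\Ka$---so the difference is cosmetic rather than structural.
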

\aaaaa 
It is known (\cite[290p]{forrester}) that, 
for each $ (x,y)\in (0,\infty)^2 $, 
\begin{align}\label{:51g}&
\limi{\n } \Kan (x,y) = 
\Ka (x,y) 
.\end{align}
Furthermore, one can easily see that 
the convergence takes place compact uniformly in 
$ (x,y)\in [0,\infty)^2 $. 
We deduce \eqref{:40a} from \eqref{:51e} and \eqref{:51g} immediately. 

The condition \eqref{:40b} follows from \eqref{:51e} and \eqref{:51g}. 
In fact, from \eqref{:51g} and the definition \eqref{:51d} of the kernel 
$ \Kan $, we deduce that the norm 
$ \mathsf{k}_{\alpha ,i}^{\n ,n } (x_1,\ldots,x_n) $ 
of the $ i$th row vector of the matrix 
$ [\Kan (x_i,x_j)]_{i,j=1,\ldots,n} $ 
satisfies the following inequality. 
\begin{align}\label{:51H}& 
\sup_{(x_1,\ldots,x_n) \in S_r^n} 
\mathsf{k}_{\alpha ,i}^{\n ,n } (x_1,\ldots,x_n) \le \cref{;51} n^{1/2} 
.\end{align}
Here $ \Ct \label{;51} = \cref{;51} (r) $ is a positive constant 
independent of $ \n $ and $ n $. 
Hence from Hadamard's inequality we deduce that 
\begin{align}\label{:51i}& 
 \sup_{(x_1,\ldots,x_n) \in S_r^n} 
 |\det  [\Kan (x_i,x_j)]_{i,j=1,\ldots,n} | \le \cref{;51}^n n^{n/2}
.\end{align}

The conditions \eqref{:40c}--\eqref{:40e} 
are obvious from construction and \eqref{:51h}.  
\bbbbb

\bigskip

By \lref{l:51}, it only remains to prove \eqref{:40f}. 
Taking \lref{l:42} into account, we will deduce \eqref{:40f} from 
\eqref{:42A}--\eqref{:42D}. 
The key point of this is the estimate \eqref{:52a} in \lref{l:52}, 
which control the 1-point correlation functions $ \rbNone $ of $ \mub $. 
To prove \eqref{:52a} we prepare a bound of $ \rbNone $. 

\begin{lem} \label{l:50} 
Let $ \alpha > -1 $ and $ \omega > 1 $. Then for all $ \n \in \N $ 
\begin{align}\label{:52o}&
\rbNone (x) \le \frac{\cref{;b52f} }{\sqrt{x}} 
\quad \text{ for } 1 \le  x \le 4\n \omega 
.\end{align}
Here $ \Ct = \cref{;b52f} (\alpha , \omega ) \label{;b52f}$ 
 is a positive constant independent of $ x $ and $ \n $. 
\end{lem}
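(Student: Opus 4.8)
The plan is to reduce the bound on $\rbNone(x) = \Kan(x,x)$ to a uniform estimate on the rescaled Laguerre functions. By the determinantal structure \eqref{:51e} with $n=1$ we have $\rbNone(x) = \Kan(x,x) = \frac{1}{4\n}\kan(\frac{x}{4\n},\frac{x}{4\n})$, so from \eqref{:51C} this equals $\frac{1}{4\n}\w(\frac{x}{4\n})\sum_{m=0}^{\n-1} p^{[\alpha]}_m(\frac{x}{4\n})^2 / \int_0^\infty p^{[\alpha]}_m(z)^2\w(z)\,dz$, which by \eqref{:51b}--\eqref{:51aa} is $\frac{1}{4\n}\sum_{m=0}^{\n-1}\varphi_m^{[\alpha]}(\frac{x}{4\n})^2$ where $\varphi_m^{[\alpha]}(t) = \sqrt{\w(t)\,\Gamma(m+1)/\Gamma(m+\alpha+1)}\,L^{[\alpha]}_m(t)$ are the orthonormal Laguerre functions. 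Thus it suffices to show $\sum_{m=0}^{\n-1}\varphi_m^{[\alpha]}(t)^2 \le \Ct(\alpha,\omega)\,\n/\sqrt{\n t}$ for $\frac{1}{4\n}\le t \le \omega$, i.e. $\sum_{m=0}^{\n-1}\varphi_m^{[\alpha]}(t)^2 \le \Ct\sqrt{\n/t}$ on that range.

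The key tool is the classical uniform asymptotics/bounds for Laguerre functions. First I would invoke the well-known pointwise bound of the form $|\varphi_m^{[\alpha]}(t)| \le \Ct(\alpha)\,(mt)^{-1/4}$ valid on a fixed compact subinterval of $(0,\infty)$ bounded away from the turning point $t \approx 4m$ — more precisely, the Erdélyi–type estimates (see e.g. Szegő's book, or the bounds used by Forrester) give, for $t$ in the oscillatory region $c/m \le t \le (4-\epsilon)m$, the uniform bound $\varphi_m^{[\alpha]}(t)^2 \le \Ct(\alpha)/(m\sqrt{t})$ (after absorbing the relevant $\sqrt{4m-t}$ factor, which is comparable to $\sqrt{m}$ on our range since $t\le 4\n\omega$ forces $t/(4\n)\le\omega$). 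Summing over $m = 0,\dots,\n-1$ then yields $\sum_m \varphi_m^{[\alpha]}(t)^2 \lesssim \sum_{m=1}^{\n} \frac{1}{m\sqrt t} \lesssim \frac{\log \n}{\sqrt t}$, which is too weak; so instead I would use the sharper Christoffel–Darboux representation \eqref{:51D} at $x=y$ together with the asymptotic formula for $p^{[\alpha]}_{\n}$ and its derivative near the hard edge, giving directly $\frac{1}{4\n}\kan(t,t) \le \Ct/\sqrt{4\n t}$ — equivalently, one uses the convergence \eqref{:51g} to the Bessel kernel together with a uniform control of the error. The cleanest route is: for $1\le x \le 4\n\delta_0$ with $\delta_0$ small, use the local scaling $\Kan(x,x) \approx \Ka^{\mathrm{bes}}$-type estimate; for $4\n\delta_0 \le x \le 4\n\omega$, use the bulk plasma-type bound $\Kan(x,x) \le \Ct/\sqrt{x(4\n\omega - x)}\cdot\sqrt{\cdots}$, which on $x\le 4\n\omega$ is dominated by $\Ct(\omega)/\sqrt{x}$.

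The main obstacle is obtaining the estimate \emph{uniformly in $\n$ and over the full range $1\le x \le 4\n\omega$}, straddling both the hard-edge scaling regime (where $x = O(1)$) and the bulk (where $x \sim \n$), since these require different asymptotic expansions of the Laguerre functions glued together. Concretely, I expect the technical heart of \sref{s:7} (where this lemma is proved) to be a careful case analysis splitting $[1,4\n\omega]$ into a neighbourhood of the origin, the bulk, and a neighbourhood of the soft edge $4\n$ — though $\omega>1$ is fixed so the soft edge itself is excluded and only the region up to $4\n\omega$ with the $\sqrt{4\n-x}$-type weights staying comparable to $\sqrt{\n}$ needs handling — and in each regime invoking the appropriate Plancherel–Rotach / Bessel-type asymptotics with explicit uniform remainder bounds. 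The factor $1/\sqrt{x}$ on the right of \eqref{:52o} is exactly the order of the Bessel kernel $\Ka(x,x)$ as $x\to\infty$ (since $J_\alpha(\sqrt x)^2 \sim \frac{2}{\pi\sqrt x}$), so the lemma is sharp and no cruder bound will do.
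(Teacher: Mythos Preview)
Your overall strategy---pass to the diagonal Christoffel--Darboux expression and feed in uniform Laguerre asymptotics---is the right one, and it is what the paper does. However, your execution plan diverges from the paper's in two respects.

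First, the case-splitting you anticipate (hard edge / bulk / soft-edge neighbourhood) is unnecessary, and your remark about the soft edge reflects a confusion about the rescaling. In the \emph{unrescaled} Laguerre variable $t = x/(4\n)$ the spectral support is $[0,4\n]$, so the soft edge sits at $t\approx 4\n$; the range $1\le x\le 4\n\omega$ corresponds to $\tfrac{1}{4\n}\le t\le\omega$ with $\omega$ fixed, which lies entirely in the hard-edge regime. The paper accordingly uses a \emph{single} Hilb-type asymptotic (Szeg\H{o}, Theorem~8.22.4),
\[
\w(t)^{1/2} L^{[\alpha]}_{\n}(t) \;=\; A_{\n,\alpha}\, J_{\alpha}\bigl(\sqrt{4Nt}\bigr) + t^{5/4}\,O(\n^{\alpha/2-3/4}),
\qquad c\,\n^{-1}\le t\le\omega,
\]
valid uniformly over the whole range in question; no Plancherel--Rotach or bulk formulas enter.

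Second, and more substantively, your proposal does not identify the mechanism that actually produces the bound. The paper rewrites the diagonal Christoffel--Darboux via $(L^{[\alpha]}_{\n})' = -L^{[\alpha+1]}_{\n-1}$ as a difference of two products of weighted Laguerre functions (the quantity $M_{\alpha}^{\n}$ of Lemma~\ref{l:71}). After inserting Hilb's formula into each of the four factors, the leading contribution becomes
\[
J_{\alpha}\Bigl(\sqrt{\tfrac{N-1}{\n}y}\Bigr)J_{\alpha+1}\Bigl(\sqrt{\tfrac{N-1}{\n}y}\Bigr)
\;-\;
J_{\alpha}\Bigl(\sqrt{\tfrac{N}{\n}y}\Bigr)J_{\alpha+1}\Bigl(\sqrt{\tfrac{N-2}{\n}y}\Bigr),
\]
i.e.\ the same products of Bessel functions evaluated at arguments that differ by $O(1/\n)$. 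A one-step Taylor expansion then yields an extra factor $O(1/\n)$, which is precisely what is needed to cancel the growth $O(\n^{\alpha+1/2})$ coming from the normalisations $A_{\n,\alpha}A_{\n,\alpha+1}$ and give $M_{\alpha}^{\n}(y/4\n)=O(\n^{\alpha-1/2})$. Without this cancellation the naive bound is off by a factor $\n$; your sketch does not account for it, and the ``$\log\n$ loss'' you encountered in the direct summation is a symptom of the same missing gain.
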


This lemma follows from an asymptotic formula of Hilb's type 
from \cite[Theorem 8.22.4, 199 p.]{szego}. 
Since the proof is long, although straightforward, 
we postpone it in Appendix (\sref{s:7}).

The next result is the most significant step of the proof. 
\begin{lem} \label{l:52}
The condition \eqref{:42A} is satisfied. 
Furthermore, it holds that  
\begin{align}\label{:52a}&
\2  
\int_{\SS _{s\infty}^{x}} \frac{1}{|x-y|}\rbNone (y) dy  = 0
.\end{align}
\end{lem}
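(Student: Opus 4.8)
The goal is to control the tail contribution $\int_{s\le|x-y|}\frac{1}{|x-y|}\rbNone(y)\,dy$ uniformly in $\n$ and $x\in\Sq$, and show it vanishes in the iterated limit $\limi{s}\limsupi{\n}\sup_{x\in\Sq}$. The natural strategy is to split the region of integration $\{s\le|x-y|<\infty\}$ into a bounded ``near-field'' piece, where the rescaled kernel $\Kan$ converges compact-uniformly to the Bessel kernel $\Ka$ by \eqref{:51g}, and an unbounded ``far-field'' piece $\{y\ge 4\n\omega\}$ for a fixed $\omega>1$, where one must use the finite-$\n$ structure of $\mubN$ (it is supported on configurations with exactly $\n$ points, so its density is the Laguerre ensemble \eqref{:51f}). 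For the near-field piece, say $s\le|x-y|$ and $y\le R$ for $R$ large, \lref{l:50} gives $\rbNone(y)\le\cref{;b52f}/\sqrt{y}$, and since $x\in\Sq$ is bounded, $\int_{s\le|x-y|,\,y\le R}\frac{1}{|x-y|\sqrt{y}}\,dy$ is finite; but this alone does not go to $0$ as $s\to\infty$ unless $R$ also grows, so in fact I would take the near-field to be $\{s\le|x-y|\le 4\n\omega\}$ and exploit the $1/|x-y|\le 1/s$ bound together with integrability of $\rbNone\le\cref{;b52f}/\sqrt{y}$ against $dy$ over a growing interval --- which diverges like $\sqrt{\n}$, so this crude bound is \emph{not} enough either.

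Consequently the real argument must be more careful. The right approach is: for $1\le y\le 4\n\omega$, use \lref{l:50} to write $\rbNone(y)\le\cref{;b52f}/\sqrt{y}$, and then for $s\le|x-y|$ with $x\in\Sq$ bounded, note $|x-y|\ge s$ forces $y$ away from the bounded set $\Sq$, so
\begin{align}\notag
\int_{s\le|x-y|,\ 1\le y\le 4\n\omega}\frac{\rbNone(y)}{|x-y|}\,dy
\le \cref{;b52f}\int_{s\le|x-y|}\frac{1}{|x-y|\sqrt{y}}\,dy.
\end{align}
Splitting this last integral at $|x-y|=\sqrt{\n}$ (say), on $\{s\le|x-y|\le\sqrt{\n}\}$ we bound $1/\sqrt{y}$ by a constant (since $y\ge s-\q$ is bounded below once $s$ is large) and get something like $\cref{;b52f}\cdot(s-\q)^{-1/2}\log(\sqrt{\n})$, which still does not obviously vanish. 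This indicates the genuinely necessary input is a \emph{better} decay of $\rbNone$ for large $y$ than $1/\sqrt{y}$ --- presumably the Laguerre one-point function decays faster near the right (soft) edge and beyond, and one should instead establish (or quote from the literature on Laguerre ensembles) a bound of the form $\rbNone(y)\le C y^{-1}$ or with integrable tail for $y$ of order $\n$, so that $\int_s^\infty \frac{1}{|x-y|}\rbNone(y)\,dy\le C\int_s^\infty y^{-2}\,dy=C/s\to 0$ uniformly in $\n$. The far-field $\{y> 4\n\omega\}$ should be controlled using exponential decay of the Laguerre density outside the bulk (the factor $e^{-\sum x_i/4\n}$ and the sub-Gaussian edge behavior), so that $\int_{4\n\omega}^\infty\rbNone(y)\,dy$ is exponentially small in $\n$ and the $1/|x-y|\le 1/(4\n\omega-\q)$ prefactor only helps.

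Regarding the first claim, that \eqref{:42A} holds: this is $\2 |\int_{\SS_{s\infty}^x}\gN(x,y)\rNone(y)\,dy|=0$ with $\gN(x,y)=2/(x-y)$ and $\rNone=\rbNone$, which is exactly \eqref{:52a} up to the harmless factor $2$ and absolute values; I would first dispose of the sign issue (for $x\in\Sq$ fixed and $|x-y|\ge s$ large, $y>x$ so $x-y<0$ has a definite sign, hence $|\int\cdots|\le\int\frac{1}{|x-y|}\rbNone(y)\,dy$) and thereby reduce \eqref{:42A} to \eqref{:52a}.

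\textbf{Main obstacle.} The crux is obtaining a tail bound on $\rbNone$ strong enough to beat the $1/|x-y|$ weight after integration, \emph{uniformly} in $\n$ --- the bound $\rbNone(y)\le \cref{;b52f}/\sqrt{y}$ from \lref{l:50} is sharp in the bulk but its integral against $dy/|x-y|$ over the bulk $[1,4\n\omega]$ grows (logarithmically or worse) in $\n$. Overcoming this requires either a sharper estimate valid near and beyond the right edge (using Hilb/Plancherel--Rotach asymptotics for Laguerre polynomials, in the same spirit as \lref{l:50}) showing, e.g., integrable decay $\rbNone(y)\lesssim y^{-1-\epsilon}$ or exponential decay once $y\gtrsim\n$, or a more clever use of the determinantal identity $\rbNone(y)=\Kan(y,y)$ together with the operator bound $0\le\Kan\le\mathrm{Id}$ to extract cancellation. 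I expect this edge/tail analysis --- combining \lref{l:50} in the bulk with Laguerre asymptotics at the soft edge and exponential estimates beyond it --- to be the main technical hurdle, while the reduction of \eqref{:42A} to \eqref{:52a} and the splitting of the domain are routine.
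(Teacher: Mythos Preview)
Your overall architecture---split $\SS_{s\infty}^x$ into a bulk piece and a far-field piece, invoke \lref{l:50} on the bulk, use the finite total mass on the far field, and reduce \eqref{:42A} to \eqref{:52a}---is exactly the paper's strategy. But you talk yourself out of a proof that actually works.

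The key oversight is in your analysis of the bulk integral. After applying \lref{l:50} you arrive at
\[
\int_{\{s\le|x-y|\}\cap\{1\le y\le 4\n\omega\}}\frac{\cref{;b52f}}{|x-y|\sqrt{y}}\,dy,
\]
and then you split at $|x-y|=\sqrt{\n}$ and bound $1/\sqrt{y}$ by a constant on the lower range, obtaining a $\log\n$ divergence. This is a self-inflicted wound: do not separate the two factors. For $x\in\Sq$ fixed and $y$ large, $|x-y|\ge y-\rrr$, so the integrand is dominated by $\cref{;b52f}\,(y-\rrr)^{-1}y^{-1/2}\sim y^{-3/2}$, which is integrable at infinity. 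Hence
\[
\sup_{\n}\sup_{x\in\Sq}\int_{\SS_{s\infty}^x\cap[1,\omega\n]}\frac{\rbNone(y)}{|x-y|}\,dy
\le \sup_{x\in\Sq}\int_{\SS_{s\infty}^x}\frac{\cref{;b52f}}{|x-y|\sqrt{y}}\,dy\longrightarrow 0
\quad\text{as }s\to\infty,
\]
with a bound \emph{independent of $\n$}. This is precisely the paper's \eqref{:52g}. So \lref{l:50} is already sharp enough; no Plancherel--Rotach or soft-edge refinement is needed.

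For the far field the paper is also simpler than what you propose. There is no need for exponential decay of the Laguerre density: just use $\int_0^\infty\rbNone(y)\,dy=\n$ (the total number of particles) together with $|x-y|\ge\omega\n-\rrr$ for $y\ge\omega\n$ and $x\in\Sq$, giving
\[
\sup_{x\in\Sq}\int_{\SS_{s\infty}^x\cap[\omega\n,\infty)}\frac{\rbNone(y)}{|x-y|}\,dy
\le\frac{\n}{\omega\n-\rrr}\longrightarrow\frac{1}{\omega}.
\]
Combining the two pieces yields $\2\int_{\SS_{s\infty}^x}\frac{\rbNone(y)}{|x-y|}\,dy\le 1/\omega$ for every $\omega>1$, hence the limit is $0$. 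Your reduction of \eqref{:42A} to \eqref{:52a} is fine.
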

\aaaaa 
Since \eqref{:42A} follows from \eqref{:52a}, we only prove \eqref{:52a} .  
We divide $ \SS _{s\infty}^{x} $ into 
two parts $ \SS _{s\infty}^{x}\cap [s,\omega \n ]$ and 
$ \SS _{s\infty}^{x}\cap [\omega \n , \infty )$, 
where $ \omega $ is a positive constant. 

We begin by the first case $ \SS _{s\infty}^{x}\cap [s,\omega \n ]$. 
From \eqref{:52o} we deduce that 
\begin{align} \label{:52g}& 
  \sup _{\n \in \mathbb{N}} \sup_{x\in\Sq }
\int_{\SS _{s\infty}^{x}\cap [s,\omega \n ]} \frac{\rbNone (y)}{|x-y|} dy 
\le 
 \sup_{x\in\Sq }
\int_{\SS _{s\infty}^{x}} 
\frac{\cref{;b52f}}{|x-y|\sqrt{|y|}} dy 
 \to 0
\end{align}
as $ s \to \infty $. 
 As for the second case $ \SS _{s\infty}^{x}\cap [\omega \n , \infty )$, 
we see that for $ r < \omega \n $
\begin{align}\label{:52q} & 
\sup_{x \in S_r} 
   \int_{\SS _{s\infty}^{x}\cap [\omega \n , \infty )} 
   \frac{\rbNone (y)}{|x-y|}  dy  
\leq  
  \frac{1}{ \n \omega - r } \int_{0}^{\infty} \rbNone (y) dy 
 =   \frac{\n }{ \n \omega - r }  
.\end{align}
Then we deduce from \eqref{:52q} that 
\begin{align}\label{:52h}
& \lim_{s \to \infty} \limsup_{\n \to \infty} \sup_{x \in S_r} 
   \int_{\SS _{s\infty}^{x}\cap [\omega \n , \infty )} 
   \frac{\rbNone (y)}{|x-y|}  dy  
\le  \frac{1}{\omega }  
.\end{align}
Combining \eqref{:52g} and \eqref{:52h}, we deduce that 
\begin{align} \label{:52z}& 
\2  
 \int_{\SS _{s\infty}^{x}}
  \frac{\rbNone (y)}{|x-y|} dy \le  \frac{1}{\omega }  
.\end{align}
Taking $ \omega > 0 $ to be arbitrary large in \eqref{:52z} 
yields \eqref{:52a}.  
\bbbbb

\bigskip 

We next prepare two properties \eqref{:82k} and \eqref{:53a} 
of determinantal kernels. 
We will repeatedly use these in the sequel.  
Let $ \mubNx $ be the reduced Palm measure of $ \mubN $ conditioned at $ x $ 
and let $ \rbNxn $ be its $ n $-point correlation function as before. 
Then $ \mubNx $ has a  determinantal structure with kernel 
\begin{align}\label{:82k}&
\Kanx (y,z)= \Kan (y,z) - 
\frac{\Kan (y,x)\Kan (x,z) }{ \Kan (x,x)}
.\end{align}
This relation follows from a general theorem on determinantal random point fields \cite[Theorem 1.7]{shirai-t}. 
Applying the Schwarz inequality to \eqref{:51C}, we deduce from \eqref{:51d} 
that   
\begin{align}\label{:53a}&
|\Kan (x,y) |
 \le \sqrt{\Kan (x,x)} \sqrt{\Kan (y,y)}
 =   \sqrt{\rbNone (x)} \sqrt{\rbNone (y)} 
.\end{align}
Here the equality in \eqref{:53a} follows from \eqref{:51e} with $ n = 1$.

\begin{lem} \label{l:53} 
 The condition \eqref{:42B} is satisfied. 
Furthermore, it holds that 
\begin{align}\label{:53d}&
\2  
\int_{\SS _{s\infty}^{x}} \frac{| \rbNxone (y)- \rbNone (y) | }{|x-y|} dy  = 0
.\end{align}
\end{lem}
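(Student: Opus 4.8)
The plan is to exploit the determinantal structure of the reduced Palm measure $\mubNx$ recorded in \eqref{:82k} to write the difference $\rbNxone-\rbNone$ in closed form, dominate it pointwise by $\rbNone$, and thereby reduce \eqref{:53d} to \eqref{:52a} of \lref{l:52}, which has already been established.

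First I would note that the kernels $\Kan$ are symmetric, which is immediate from \eqref{:51C} and \eqref{:51d}. Since $\mubNx$ is determinantal with kernel $\Kanx$, its one-point correlation function is $\rbNxone(y)=\Kanx(y,y)$; setting $z=y$ in \eqref{:82k} and using $\Kan(y,y)=\rbNone(y)$ (which is \eqref{:51e} with $n=1$) gives, for $\mubN$-a.e.\ $x$ so that $\rbNone(x)>0$,
\begin{align*}&
\rbNxone(y) = \rbNone(y) - \frac{|\Kan(x,y)|^{2}}{\rbNone(x)}
.\end{align*}
In particular $\rbNxone(y)\le\rbNone(y)$, so $|\rbNxone(y)-\rbNone(y)|=|\Kan(x,y)|^{2}/\rbNone(x)$. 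The Schwarz-type bound \eqref{:53a} then yields $|\Kan(x,y)|^{2}\le\rbNone(x)\rbNone(y)$, whence the pointwise domination
\begin{align*}&
|\rbNxone(y)-\rbNone(y)| \le \rbNone(y)
.\end{align*}
Consequently the integral in \eqref{:53d} is bounded above by $\int_{\SS _{s\infty}^{x}}\rbNone(y)/|x-y|\,dy$, and applying $\2$ to both sides makes the right-hand side vanish by \eqref{:52a}; this proves \eqref{:53d}. Finally \eqref{:42B} follows at once: since $\gN(x,y)=2/(x-y)$ by \eqref{:51p}, the integrand in \eqref{:42B} is dominated by $2|\rbNxone(y)-\rbNone(y)|/|x-y|$, so \eqref{:53d} gives the claim.

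I do not anticipate any genuine obstacle: the lemma collapses to the already-proved \lref{l:52} via one algebraic identity for determinantal Palm kernels and one Cauchy--Schwarz inequality. The only points worth a remark are that the closed form for $\rbNxone$ holds only where $\rbNone(x)>0$, which is harmless since reduced Palm measures are defined $\rbNone\,dx$-a.e., and that the domain $\SS _{s\infty}^{x}$ appearing here is precisely the one in \lref{l:52}, so its conclusion transfers without change.
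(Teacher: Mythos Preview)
Your proposal is correct and follows essentially the same approach as the paper: compute the difference $\rbNxone(y)-\rbNone(y)$ from the Palm kernel identity \eqref{:82k}, bound its absolute value by $\rbNone(y)$ via the Schwarz inequality \eqref{:53a}, and then invoke \eqref{:52a} from \lref{l:52}. The paper's proof is just a terser version of exactly this argument.
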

\aaaaa 
From \eqref{:10a}, \eqref{:82k}, and \eqref{:53a}, we deduce that 
\begin{align}\label{:82u}
|\rbNxone (y)- \rbNone (y) | &
 = | \frac{\Kan (y,x)\Kan (x,y)}{\Kan (x,x)} |
 \le \Kan (y,y) = \rbNone (y) 
.\end{align}
Hence \eqref{:53d} is immediate from \eqref{:52a}. 
The condition \eqref{:42B} follows from \eqref{:53d} immediately. 
\bbbbb

\begin{lem} \label{l:54} 
The condition \eqref{:42C} is satisfied. 
Furthermore, it holds that 
\begin{align}\label{:54a}&
\2  \int_{\SS _{s\infty}^{x}} \frac{\rbNone (y)}{|x-y|^{2}} dy 
+ \int_{(\SS _{s\infty}^{x})^{2}} \frac{ \rbNtwo (y,z) }{|x-y||x-z|}dydz = 0 
.\end{align}
\end{lem}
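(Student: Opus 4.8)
The plan is to reduce \eqref{:54a}, and with it \eqref{:42C}, to the estimate \eqref{:52a} already proved in \lref{l:52}. First I would record the elementary geometric fact that, for $ x \in \Sq $ and $ s \ge r $, the set $ \SS _{s\infty}^{x} $ is contained in $ \{ y\, ;\, y > x \} $; on this set $ x - y < 0 $, so that both integrands in \eqref{:54a} are nonnegative and, since $ \gN (x,y) = 2/(x-y) $ by \eqref{:51p},
\begin{align*}
|\gN (x,y)|^{2}\,\rbNone (y) &= \frac{4\,\rbNone (y)}{|x-y|^{2}} \ge 0 ,
\\
\gN (x,y)\,\gN (x,z)\,\rbNtwo (y,z) &= \frac{4\,\rbNtwo (y,z)}{|x-y|\,|x-z|} \ge 0 .
\end{align*}
Hence the quantity inside the absolute value in \eqref{:42C} is exactly $ 4 $ times the difference of the two integrals appearing in \eqref{:54a}, so it is dominated in absolute value by $ 4 $ times their sum; thus \eqref{:42C} follows once \eqref{:54a} is established. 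Since the two summands in \eqref{:54a} are nonnegative, it then suffices to show that each of them has $ \2 $-limit zero.

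For the single integral I would use $ |x-y| \ge s \ge 1 $ on $ \SS _{s\infty}^{x} $ to bound $ \int_{\SS _{s\infty}^{x}} \rbNone (y)/|x-y|^{2}\, dy \le \int_{\SS _{s\infty}^{x}} \rbNone (y)/|x-y|\, dy $, whose $ \2 $-limit is $ 0 $ by \eqref{:52a}. For the double integral I would invoke the determinantal structure of $ \mubN $: since $ \Kan $ is symmetric, \eqref{:51e} with $ n=2 $ gives $ \rbNtwo (y,z) = \Kan (y,y)\Kan (z,z) - \Kan (y,z)^{2} \le \Kan (y,y)\Kan (z,z) = \rbNone (y)\,\rbNone (z) $, the last equality being \eqref{:51e} with $ n=1 $. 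Hence by Tonelli's theorem
\begin{align*}
\int_{(\SS _{s\infty}^{x})^{2}} \frac{\rbNtwo (y,z)}{|x-y|\,|x-z|}\, dy\, dz \le \Bigl( \int_{\SS _{s\infty}^{x}} \frac{\rbNone (y)}{|x-y|}\, dy \Bigr)^{2} ,
\end{align*}
and the right-hand side again has $ \2 $-limit $ 0 $ by \eqref{:52a}, the squaring causing no trouble since $ t \mapsto t^{2} $ is continuous and nondecreasing on $ \ZI $. Combining the two bounds yields \eqref{:54a}, and with the first paragraph, \eqref{:42C}.

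I do not expect a serious obstacle here. The genuinely delicate input is \lref{l:52}, which rests on the Hilb-type pointwise bound \eqref{:52o} for $ \rbNone $, and that has already been isolated. The only points requiring a little care are the sign bookkeeping of the first paragraph (so that the ``difference'' in \eqref{:42C} is controlled by the ``sum'' in \eqref{:54a}), the symmetry of $ \Kan $ together with the bound $ \rbNtwo (y,z) \le \rbNone (y)\,\rbNone (z) $, and the harmless interchange of $ \sup_{x} $, $ \limsup_{\n } $ and $ \lim_{s} $ with squaring, all legitimate because every quantity in sight is nonnegative.
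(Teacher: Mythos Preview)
Your argument is correct and follows essentially the same route as the paper's proof: both terms in \eqref{:54a} are reduced to \eqref{:52a} via the factorization bound $\rbNtwo (y,z) \le C\,\rbNone (y)\rbNone (z)$ (you get $C=1$ from symmetry of $\Kan$, the paper gets $C=2$ from \eqref{:53a}) and the trivial estimate $|x-y|^{-2}\le |x-y|^{-1}$ on $\SS _{s\infty}^{x}$. Your explicit sign bookkeeping showing that \eqref{:42C} follows from \eqref{:54a} is a welcome clarification of what the paper states in one line.
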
 
\aaaaa 
From \lref{l:52} we easily deduce that 
\begin{align}\label{:54b}&
\2 \int_{\SS _{s\infty}^{x}} \frac{\rbNone (y)}{|x-y|^{2}} dy = 0 
.\end{align}
From \eqref{:51e} and \eqref{:53a} we see that 
\begin{align}\label{:54c}
 \rbNtwo (y,z) & = \rbNone (y)\rbNone (z) - \Kan (y,z)\Kan (z,y) 
 \le 2  \rbNone (y)\rbNone (z)
.\end{align}
Hence from \eqref{:54c} and Fubini's theorem, we deduce that 
\begin{align}\label{:54d} 
\int_{(\SS _{s\infty}^{x})^2} 
\frac{\rbNtwo (y,z)  }{|x-y||x-z|} dydz 
\le 
 &  \int_{(\SS _{s\infty}^{x})^2} 
\frac{ 2 \rbNone (y)\rbNone (z) }{|x-y||x-z|} dydz 
\\ \notag  \le \, 
 & 2 \, (\int_{\SS _{s\infty}^{x}} \frac{ \rbNone (y)}{|x-y|} dy)^2 
.\end{align}
Then from \eqref{:54d} and \lref{l:52} we deduce that 
\begin{align}\label{:54D}&
\2 \int_{(\SS _{s\infty}^{x})^2} 
\frac{\rbNtwo (y,z)  }{|x-y||x-z|} dydz = 0 
.\end{align}
From \eqref{:54b} and \eqref{:54D}, we conclude \eqref{:54a}.  
This implies \eqref{:42C}. 
\bbbbb


\bigskip

\begin{lem} \label{l:55} 
The condition \eqref{:42D} is satisfied. Furthermore, it holds that 
\begin{align}\label{:55a}&
\2  
\int_{\SS _{s\infty}^{x}}\frac{| \rbNxone  (y)-\rbNone (y) |}{|x-y|^{2}} 
 dy
\\ & \quad \quad \notag 
 + \int_{(\SS _{s\infty}^{x})^{2} }
\frac{| \rbNxtwo  (y,z)-\rbNtwo (y,z) |}{|x-y||x-z|}   dydz = 0 
.\end{align}\end{lem}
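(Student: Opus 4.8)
The plan is to reduce condition \eqref{:42D} to the two nonnegative integrals appearing in \eqref{:55a}, and then to dominate each of them by a quantity already shown to vanish in Lemmas \ref{l:52} and \ref{l:54}. Since $ \gN (x,y) = 2/(x-y) $ gives $ |\gN (x,y)| = 2/|x-y| $, the triangle inequality bounds the quantity inside the absolute value sign of \eqref{:42D} by $ 4 $ times the left-hand side of \eqref{:55a}. Hence it suffices to prove \eqref{:55a}.

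For the first integral in \eqref{:55a}, I would simply reuse the pointwise bound \eqref{:82u}, namely $ |\rbNxone (y) - \rbNone (y)| \le \rbNone (y) $, which yields
\begin{align*}
\2 \int_{\SS _{s\infty}^{x}}\frac{|\rbNxone (y) - \rbNone (y)|}{|x-y|^{2}}\,dy
\le
\2 \int_{\SS _{s\infty}^{x}}\frac{\rbNone (y)}{|x-y|^{2}}\,dy = 0
\end{align*}
by \eqref{:54b}.

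For the second integral, the key input is the determinantal structure \eqref{:82k} of the reduced Palm measure $ \mubNx $. Writing $ \varepsilon (y,z) = \Kan (y,x)\Kan (x,z)/\Kan (x,x) $, so that $ \Kanx (y,z) = \Kan (y,z) - \varepsilon (y,z) $, and using the determinantal identities $ \rbNxtwo (y,z) = \Kanx (y,y)\Kanx (z,z) - \Kanx (y,z)\Kanx (z,y) $ and (cf.\ \eqref{:54c}) $ \rbNtwo (y,z) = \Kan (y,y)\Kan (z,z) - \Kan (y,z)\Kan (z,y) $, I would expand
\begin{align*}
\rbNxtwo (y,z) - \rbNtwo (y,z)
&= -\Kan (y,y)\varepsilon (z,z) - \varepsilon (y,y)\Kan (z,z) + \varepsilon (y,y)\varepsilon (z,z)
\\
&\quad + \Kan (y,z)\varepsilon (z,y) + \varepsilon (y,z)\Kan (z,y) - \varepsilon (y,z)\varepsilon (z,y).
\end{align*}
The Schwarz bound \eqref{:53a} gives $ |\Kan (y,z)| \le \sqrt{\rbNone (y)}\sqrt{\rbNone (z)} $, and the same Schwarz inequality applied to \eqref{:82k} yields $ |\varepsilon (y,z)| \le \sqrt{\rbNone (y)}\sqrt{\rbNone (z)} $, hence $ |\varepsilon (y,y)| \le \rbNone (y) $. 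Thus each of the six products above is dominated by $ \rbNone (y)\rbNone (z) $, so $ |\rbNxtwo (y,z) - \rbNtwo (y,z)| \le 6\,\rbNone (y)\rbNone (z) $. By Fubini's theorem,
\begin{align*}
\int_{(\SS _{s\infty}^{x})^{2}}\frac{|\rbNxtwo (y,z) - \rbNtwo (y,z)|}{|x-y||x-z|}\,dydz
\le
6\Big(\int_{\SS _{s\infty}^{x}}\frac{\rbNone (y)}{|x-y|}\,dy\Big)^{2},
\end{align*}
and applying $ \2 $ to the right-hand side gives $ 0 $ by \lref{l:52}. Combining the two estimates proves \eqref{:55a}, and with it \eqref{:42D}.

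The only step requiring genuine care — and the main obstacle in name only — is the bookkeeping in the expansion of $ \rbNxtwo - \rbNtwo $: one must check that every term carries an envelope of the form $ \rbNone (y)\rbNone (z) $, with no uncontrolled cross-terms. This is precisely where the uniform Schwarz estimate $ |\varepsilon (y,z)| \le \sqrt{\rbNone (y)\rbNone (z)} $, an immediate consequence of \eqref{:53a} applied to \eqref{:82k}, is essential; everything else is routine, the conclusion being inherited from Lemmas \ref{l:52} and \ref{l:54}.
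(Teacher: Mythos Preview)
Your proof is correct and follows essentially the same route as the paper: bound the first integral via \eqref{:82u} and \eqref{:54b}, expand $\rbNxtwo - \rbNtwo$ using the Palm kernel \eqref{:82k}, dominate each term by $\rbNone (y)\rbNone (z)$ via the Schwarz bound \eqref{:53a}, and then invoke \lref{l:52}. The only cosmetic difference is that the paper observes the cancellation $\varepsilon (y,y)\varepsilon (z,z)=\varepsilon (y,z)\varepsilon (z,y)$, reducing your six terms to four (constant $4$ instead of $6$ in \eqref{:55d}); this has no bearing on the argument.
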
 
\aaaaa 
We deduce from \lref{l:53} that 
\begin{align}\label{:55b}&
\2 \int_{\SS _{s\infty}^{x}} 
 \frac{| \rNone _{x} (y)-\rNone (y)|}{|x-y|^{2}} dy  = 0 
.\end{align}
To estimate the second term of \eqref{:55a} we observe that 
\begin{align}\label{:55c}& 
\rbNxtwo (y,z)- \rbNtwo (y,z)
\\ \notag 
= &
- \Kan (y,x)\Kan (x,y) \frac{\Kan (z,z)}{\Kan (x,x)}
- \Kan (z,x)\Kan (x,z)  \frac{\Kan (y,y)}{\Kan (x,x)}
\\ \notag &
+ \Kan  (y,x)\Kan (x,z)\Kan (z,y) \frac{1}{\Kan (x,x)}
+ \Kan (y,z)\Kan (z,x)\Kan (x,y) \frac{1}{\Kan (x,x)}
.\end{align}
Then applying \eqref{:53a} to each term of the right-hand side, 
we obtain 
\begin{align}\label{:55d}&
|\rbNxtwo (y,z)- \rbNtwo (y,z)| 
\le 4\rbNone (y)\rbNone (z)
.\end{align}
Hence from \eqref{:55d} and Fubini's theorem, we deduce that 
\begin{align}\label{:55e}   
\int_{(\SS _{s\infty}^{x})^{2} }
\frac{ | \rbNxtwo  (y,z)-\rbNtwo (y,z) | }{|x-y||x-z|}dydz 
\le & \int_{(\SS _{s\infty}^{x})^{2} }
\frac{4\rbNone (y)\rbNone (z)}{|x-y||x-z|} dydz 
\\ \notag
=  & \, 4 \, 
\left\{ 
\int_{\SS _{s\infty}^{x}}\frac{\rbNone (y)}{|x-y|} dy \right\}^2 
.\end{align}
Then from \eqref{:55e} and \lref{l:52}, we see that 
\begin{align}\label{:55E}&
\2 \int_{(\SS _{s\infty}^{x})^{2} }
\frac{ | \rbNxtwo  (y,z)-\rbNtwo (y,z) | }{|x-y||x-z|}dydz = 0 
.\end{align}
From \eqref{:55b} and \eqref{:55E}, we obtain \eqref{:55a}. 
This implies \eqref{:42D}. 
\bbbbb 

\bigskip 

\noindent 
{\em Proof of Theorems \ref{l:23}. } 
From \lref{l:51} we see that $ \mub $ satisfy \eqref{:40a}--\eqref{:40e}. 
From \lref{l:52}--\lref{l:55}, 
we deduce that $ \mub $ satisfy \eqref{:42A}--\eqref{:42D}.   
This combined with \lref{l:42} yields \eqref{:40f}. 
We thus see that all the conditions \eqref{:40a}--\eqref{:40f} of 
\tref{l:41} are fulfilled. 
Hence from \tref{l:41}, 
we obtain \tref{l:23} with the logarithmic derivative 
$ \dlog $ given by \eqref{:41a}. 
\qed

\section{Proof of \tref{l:24}}\label{s:6} 

In this section we prove \tref{l:24}. 
For this we use \cite[Theorem 2.2]{o.rm2}. 
We prepare a result from \cite{o.rm2}, which is a special case of 
\cite[Theorem 2.2]{o.rm2}. 

In the next theorem, we take $ \SS = \ZI $ or $ \SS = \mathbb{R}$. 
Let $ \mu $ be a random point field on $ \SS $. 
We assume three conditions. 

\medskip

\noindent
\Ass{B.1} The random point field $ \mu $ has 
a locally bounded, $ n $-point correlation function 
$ \rho ^n $ for each $ n \in  \N    $.

\medskip 
\noindent 
\Ass{B.2} 
There exists a sequence of random point fields  
$\{ \muN \}_{\n \in \N }$ over $ \SS $ satisfying the following.  

\noindent 
\thetag{1} 
The $ n $-point correlation functions $  \rN $ of $  \muN $ satisfy  
\begin{align} \label{:61a} &
\lim _{\n \to \infty } \rN (\mathbf{x}_n) = \rho ^n (\mathbf{x}_n) 
\quad \text{ a.e.} 
\quad \text{ for all $ n \in  \N    $,}
\\ \label{:61b} 
& \sup 
\{  \rN (\mathbf{x}_n)  ; \n \in \N     ,\, \mathbf{x}_n \in  \Sr ^n \} 
\le \{ \cref{;70} n ^{\cref{;62} }\} ^n
\quad \text{ for all $ n,r\in  \N    $} 
,\end{align}
where $ \mathbf{x}_n = (x_1,\ldots,x_n) \in S ^{n}$, 
$ \Ct \label{;70}=\cref{;70}(r) >0$, and $ \Ct \label{;62} = \cref{;62} (r) < 1 $ 
are constants depending on $ r \in  \N    $. 

\noindent 
\thetag{2} \  $ \muN (\mathsf{s}( S ) = \nN ) = 1 $ for each $ \n $, 
where $ \nN \in  \N  $ are strictly increasing. 

\noindent \thetag{3} 
$ \muN $ is a $ (\PhiN ,\PsiN ) $-canonical Gibbs measure 
for each $ \n $. 

\noindent \thetag{4} 
$ \PhiN $ satisfy the following. 
\begin{align}\label{:61c}&
\limi{\n } \PhiN (x) = \Phi (x) \text{ for a.e.\ \!\! $ x $,} 
\quad  
\infN \inf _{x \in  S } \PhiN (x)   > -\infty 
.\end{align}

Let $ \mathsf{x} =\sum _{i}\delta_{x _i }$. 
For $1 \le r < s \le \infty $ let 
$ \map{\mathsf{v}_{\ell ,rs} }{\SSS }{\mathbb{R}}$ such that 
\begin{align}\label{:62b}&
\mathsf{v}_{\ell ,rs} (\mathsf{x} )= \beta 
\big\{\sum _{x _i \in \SS _s \backslash \SS _r }
 \frac{1}{{ x }_i^{\ell } } \big\} \quad (\ell \ge 1) 
.\end{align}
Note that the sum in \eqref{:62b} makes sense for $ \muN $-a.s. $ \mathsf{x} $ 
even if $ s=\infty $. 
Indeed, by \thetag{2} of \Ass{B.2}, the total number of particles is $ \nN $ under $ \muN $. 
Hence, $ \mathsf{v}_{\ell ,rs} (\mathsf{x} )$ is well defined and finite for 
$ \muN $-a.s. $ \mathsf{x} $, for all $ \n \in  \N    $.

\smallskip 
\noindent 
\Ass{B.3} There exists an $ \ell _{0}$ such that $ \ell _{0}\in \N $ 
and that 
\begin{align} \label{:62d}&
\supN  \{ \int_{1\le |x |<\infty } 
 \frac{1 }{\ |  x |^{\ell _{0}}} \, \rNone (x )dx \} < \infty 
\\\intertext{and that, for each $  1 \le \ell < \ell _{0}$, }
\label{:62f} &
\limi{s} \supN  \|  \sup _{\n \in \N } \mathsf{v}_{\ell ,s\infty }
 \,  \|_{L^1(\SSS , \mu ^{\n })} = 0 
.\end{align}
When $ \ell _{0} = 1 $, we interpret that \eqref{:62f} always holds.  
The following is a special case of \cite[Theorem 2.2]{o.rm2}. 
We remark that the assumptions \Ass{B.1}, \Ass{B.2} and \Ass{B.3} 
correspond to 
\thetag{H.1}, \thetag{H.2} and \thetag{H.4} in \cite[Theorem 2.2]{o.rm2}, 
respectively. 

\begin{thm}[{\cite[Theorem 2.2]{o.rm2}}]\label{l:62} 
Assume \Ass{B.1}, \Ass{B.2} and \Ass{B.3}. 
Then $ \mu $ is a $ (\Phi , \Psi ) $-quasi-Gibbs measure. 
\end{thm}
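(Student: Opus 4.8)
\medskip
\noindent
The plan is to prove this abstract criterion by a finite-particle approximation: first establish, uniformly in $\n$, a two-sided bound of the Gibbsian form \eqref{:qg2} for the conditional laws of the approximating measures $\muN$ in \Ass{B.2}, and then transfer that bound to $\mu$ along $\n\to\infty$. Fix an increasing sequence $\{ b_r \}$ with $b_r\to\infty$ and fix $r,m\in\N$. By \Ass{B.2}\thetag{2},\thetag{3}, each $\muN$ is a $(\PhiN,\PsiN)$-canonical Gibbs measure carried by configurations with exactly $\nN$ points; hence, for $\muN$-a.e.\ exterior configuration $\mathsf{s}$, the conditional law of $\muN$ on $\{ \mathsf{x}(\SS _{b_r})=m \}$ given $\pi _{r}^{c}$ has density with respect to $\Lambda _{r}^{m}$ proportional to $\exp(-\mathcal{H}_{r}^{\n}(\mathsf{x})-W_{\n}(\mathsf{x},\pi _{r}^{c}(\mathsf{s})))$, where $\mathcal{H}_{r}^{\n}$ is the Hamiltonian \eqref{:qg5} with $\Phi$ replaced by $\PhiN$ and $W_{\n}(\mathsf{x},\cdot)=\sum_{x_i\in\SS _{b_r}}\sum_{y_j\notin\SS _{b_r}}\Psi(x_i,y_j)$ is the interaction with the exterior (a finite sum, since $\muN$ has $\nN$ particles).

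The heart of the argument is a bound on $W_{\n}$ that is uniform in $\n$, up to a constant depending on $r$, $m$ and the exterior configuration. Writing $\Psi(x,y)=-\beta\log|x-y|=-\beta\log|y|-\beta\log|1-x/y|$ for $|y|\ge b_r>|x|$, the contribution $-\beta m\sum_j\log|y_j|$ depends only on $m$, $b_r$ and the exterior, so it cancels against the normalising constant of the conditional density. For the remaining part I would Taylor-expand $-\beta\log|1-x/y|=\beta\sum_{\ell\ge 1}\ell^{-1}(x/y)^{\ell}$ and truncate at order $\ell_0-1$: the leading terms contribute, for $|x_i|<b_r$, at most a constant multiple of $\sum_{\ell=1}^{\ell_0-1}\mathsf{v}_{\ell,b_r\infty}(\mathsf{x})$, which by \eqref{:62f} is finite $\muN$-a.e.\ and bounded in $L^{1}(\muN)$ uniformly in $\n$, while the remainder is $O\big(|x_i|^{\ell_0}\sum_j|y_j|^{-\ell_0}\big)$ and is controlled in $L^{1}(\muN)$ uniformly in $\n$ by \eqref{:62d}. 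Combining these yields a constant $C_{1}=C_{1}(r,m,\mathsf{s})<\infty$, independent of $\n$, such that
\begin{align*}
C_{1}^{-1}\, e^{-\mathcal{H}_{r}^{\n}(\mathsf{x})}\, \Lambda _{r}^{m}(d\mathsf{x}) \;\le\; \big(\text{conditional law of }\muN\big)(d\mathsf{x}) \;\le\; C_{1}\, e^{-\mathcal{H}_{r}^{\n}(\mathsf{x})}\, \Lambda _{r}^{m}(d\mathsf{x})
\end{align*}
for $\muN$-a.e.\ $\mathsf{s}$ (the normalising constants match up to a factor $C_{1}^{2}$ because the total masses of the two dominating measures do). By \eqref{:61c} one has $\mathcal{H}_{r}^{\n}\to\mathcal{H}_{r}$ a.e.\ with a uniform lower bound, so $e^{-\mathcal{H}_{r}^{\n}}\Lambda _{r}^{m}$ converges to $e^{-\mathcal{H}_{r}}\Lambda _{r}^{m}$ strongly enough to pass such a bound to the limit.

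Next I would construct the exhausting Borel sets $\SSS _{r,k}^{m}$, which do not depend on $\mathsf{s}$: take $\SSS _{r,k}^{m}=\{ \mathsf{x}\in\SSS _{r}^{m} : \mathsf{v}_{\ell,b_r\infty}(\mathsf{x})\le k \text{ for } 1\le\ell<\ell_0,\ \sum_{y_j\notin\SS _{b_r}}|y_j|^{-\ell_0}\le k \}$. These increase to $\SSS _{r}^{m}$ modulo a $\mu$-null set because the controlling quantities are $\mu$-a.s.\ finite; this, together with the uniform-in-$\n$ $L^{1}$ bounds \eqref{:62d} and \eqref{:62f} of \Ass{B.3}, is obtained by transferring estimates from $\muN$ to $\mu$ via the correlation-function convergence \eqref{:61a}, the growth bound \eqref{:61b} (which makes the moment problem for the limiting point process determinate, so $\muN\to\mu$ weakly), and Fatou's lemma applied to the relevant lower-semicontinuous functionals. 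The weak convergence $\mu(\cdot\cap\SSS _{r,k}^{m})\to\mu(\cdot\cap\SSS _{r}^{m})$ as $k\to\infty$ then follows by monotone convergence. On $\SSS _{r,k}^{m}$ the exterior interaction is bounded by a constant depending only on $r$, $m$, $k$ and $\pi _{r}^{c}(\mathsf{s})$, so letting $\n\to\infty$ in the displayed two-sided bound — using weak convergence of the conditioned $\muN$, extracted from convergence of the associated Campbell/Palm measures via \eqref{:61a}--\eqref{:61b}, together with the stability of density bounds under weak limits — produces \eqref{:qg2} with the constant there depending on $(r,m,k,\pi _{r}^{c}(\mathsf{s}))$. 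Since $r,m,k$ were arbitrary, $\mu$ is a $(\Phi,\Psi)$-quasi-Gibbs measure in the sense of \dref{dfn:2}.

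I expect the main obstacle to be this final transfer step: commuting the limit $\n\to\infty$ with the disintegration into conditional measures, i.e.\ showing that the weak limit of the conditional laws of $\muN$ is really the conditional law $\mu _{r,k,\mathsf{s}}^{m}$ of $\mu$ for $\mu$-a.e.\ $\mathsf{s}$, while simultaneously keeping the slowly decaying logarithmic interaction with the infinitely many exterior particles under uniform control. The device of the sets $\SSS _{r,k}^{m}$, on which the exterior sums are bounded by $k$, is precisely what makes a.e.\ finiteness and the uniform-in-$\n$ estimate available at the same time, which is why \dref{dfn:2} is formulated with a constant allowed to depend on $k$.
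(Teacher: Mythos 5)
The first thing to note is that the paper does not prove this statement at all: Theorem~\ref{l:62} is an external input, quoted verbatim as a special case of \cite[Theorem 2.2]{o.rm2} (with \Ass{B.1}--\Ass{B.3} matching the hypotheses (H.1), (H.2), (H.4) there). So there is no in-paper proof to match; your proposal has to stand on its own as a proof of the quoted criterion, and as such it is a strategy outline — essentially the strategy of \cite{o.rm2} (finite-particle canonical-Gibbs approximation, uniform two-sided bounds on conditional densities, cut-off sets $\SSS_{r,k}^m$ controlling exterior sums, then $\n\to\infty$) — but the decisive steps are described rather than established, so there is a genuine gap.

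Two concrete points. First, your uniform-in-$\n$ control of the exterior interaction $W_{\n}$ is not correct as written: after extracting $-\beta m\sum_j\log|y_j|$ (fine for finite $\n$, where it cancels into the normalisation), the truncated expansion of $-\beta\log|1-x/y|$ has remainder comparable to $-\log(1-x/y)$, which blows up when an exterior particle $y_j$ sits just outside $\SS_{b_r}$ and $x$ approaches $b_r$; the claimed bound $O(|x_i|^{\ell_0}\sum_j|y_j|^{-\ell_0})$ fails there, and the near-boundary particles have to be split off and absorbed into the constant depending on $\pi_r^c(\mathsf{s})$ (this is fixable, but it is exactly the kind of bookkeeping that \cite{o.rm2} carries out and you do not). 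Second, and more seriously, the transfer of the two-sided conditional bound from $\muN$ to $\mu$ is the technical heart of \cite[Theorem 2.2]{o.rm2}: weak convergence of $\muN$ to $\mu$ (which itself needs the determinacy argument from \eqref{:61a}--\eqref{:61b}) does not imply convergence of the disintegrations $\muN(\pi_r\in\cdot\,|\,\pi_r^c)$ to $\mu_{r,k,\mathsf{s}}^m$ for $\mu$-a.e.\ $\mathsf{s}$, and "stability of density bounds under weak limits" does not commute with conditioning in any off-the-shelf way; the actual argument works with ratios of the $m$-particle density functions and the quantities in \Ass{B.3} rather than with limits of conditional laws. You flag this yourself as the main obstacle, but flagging it is not resolving it, so the proposal as it stands does not constitute a proof of Theorem~\ref{l:62}.
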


\noindent 
{\em Proof of \tref{l:24}. }
We check the assumptions \Ass{B.1}, and \Ass{B.2}, and 
\Ass{B.3} in \tref{l:62}. 
We take $ \mu = \mub $ and $ \mu ^{\n }=  \mubN $. 
Then \Ass{B.1} and \Ass{B.2} are satisfied. 
Furthermore, we take $ \ell _{0} = 1 $. 
Then from \eqref{:10b} and \lref{l:52}, we deduce \eqref{:62d} easily. 
Hence we conclude \tref{l:24} from \tref{l:62}. 
\qed

\section{Appendix: Proof of \lref{l:50}. }\label{s:7}
In this section we prove \lref{l:50}. 
Let $ -1 < \alpha < \infty $.  Let $ L^{[\alpha ]}_{\n } $
denote the Laguerre polynomial and 
$ \w (x) = e^{- x} x^{\alpha }$ as before. Let 
\begin{align}\label{:71z}&
 \Ma  (x) = 
\{   \mathsf{w} _{\alpha +1}^{\frac{1}{2}}  
L^{[\alpha +1]}_{\n -1} \, 
      \w ^{\frac{1}{2}} L^{[\alpha ]}_{\n -1} 
   -  \w ^{\frac{1}{2}}L^{[\alpha ]}_{\n } \, 
   \mathsf{w} _{\alpha +1}^{\frac{1}{2}} 
    L^{[\alpha +1]}_{\n -2}  \, 
   \} (x)
.\end{align}
Then from a straightforward calculation we obtain the following.  
\begin{lem} \label{l:71} 
There exists a positive constant $ \Ct \label{;72}$ such that  
\begin{align}\label{:71a}& 
 \rbNone (y) \le  
 \frac{\cref{;72}}{\sqrt{y}} \, 
 \frac{1}{\n ^{\alpha - \frac{1}{2}}} 
 \Ma   (\frac{y}{4\n })
\end{align}
for all $ \n \in \mathbb{N}$ and $ y \in (0,\infty)$. 
\end{lem}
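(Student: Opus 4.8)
The plan is to establish the pointwise bound in \lref{l:71} by directly inserting the Christoffel--Darboux representation of the one-point function into the definition of $\rbNone$, and then identifying the resulting combination of Laguerre polynomials with the quantity $\Ma$ up to harmless factors. First I would note that, by \eqref{:51e} with $n=1$ and the rescaling \eqref{:51d}, we have $\rbNone (y) = \Kan (y,y) = \frac{1}{4\n }\kan (\frac{y}{4\n },\frac{y}{4\n })$, so it suffices to control the diagonal value $\kan (\xi ,\xi )$ with $\xi = y/4\n $. Passing to the diagonal in the Christoffel--Darboux formula \eqref{:51D} (taking the limit $x\to y$, which produces a derivative) would give a Wronskian-type expression involving $p^{[\alpha ]}_{\n }$, $p^{[\alpha ]}_{\n -1}$ and their derivatives; however, a cleaner route is to use the contiguous-relation form \eqref{:51Zz}, differentiate the Laguerre relations $\frac{d}{dx}L^{[\alpha ]}_{\n }(x) = -L^{[\alpha +1]}_{\n -1}(x)$ and $L^{[\alpha ]}_{\n }(x) - L^{[\alpha +1]}_{\n }(x) = -L^{[\alpha +1]}_{\n -1}(x)$ (equivalently $x L^{[\alpha +1]}_{\n -1} = \n L^{[\alpha ]}_{\n } - (\n +\alpha )L^{[\alpha ]}_{\n -1}$), and rewrite the $x-y\to 0$ limit of \eqref{:51Zz} in terms of the index-shifted polynomials $L^{[\alpha +1]}_{\n -1}$ and $L^{[\alpha +1]}_{\n -2}$.

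Next I would track the elementary prefactors. From \eqref{:51Zz} and \eqref{:51d}, $\rbNone (y) = \frac{1}{4\n }\,\w (\xi )\,\frac{\Gamma (\n +1)}{\Gamma (\n +\alpha )}\,W(\xi )$, where $W$ is the Wronskian-type combination that emerges on the diagonal; writing $\w _{\alpha }(\xi ) = \xi ^{\alpha }e^{-\xi }$ and $\w _{\alpha +1}(\xi ) = \xi ^{\alpha +1}e^{-\xi }$, one absorbs one factor of $\xi ^{1/2}$ from each of $\w _{\alpha }^{1/2}$ and $\w _{\alpha +1}^{1/2}$ so that the bracket in \eqref{:71z} appears, leaving over a factor $\xi ^{-1/2}$, i.e.\ a factor proportional to $\sqrt{\n }/\sqrt{y}$ after unscaling. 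Combining this with $\Gamma (\n +1)/\Gamma (\n +\alpha ) \sim \n ^{1-\alpha }$ (Stirling), the net power of $\n $ coming out in front of $\Ma (y/4\n )$ is $\n ^{-(\alpha - 1/2)}$, matching the statement; the remaining universal bounded ratios of Gamma functions and the $4$'s are absorbed into the constant $\cref{;72}$. At this stage \eqref{:71a} holds as an \emph{identity} up to the bounded Gamma-ratio constant, not merely an inequality, so the proof of the lemma is essentially bookkeeping once the algebraic identity is pinned down.

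The main obstacle is the algebra of the confluent (diagonal) limit: one must carefully perform the $x\to y$ limit in the Christoffel--Darboux kernel \eqref{:51D}/\eqref{:51Zz} and re-express the resulting derivatives of $L^{[\alpha ]}_{\n }$, $L^{[\alpha ]}_{\n -1}$ through the standard ladder relations so that exactly the two-term combination in \eqref{:71z} is produced, with the correct index shifts $\n -1,\n ,\n -2$ and the correct weight exponents $\alpha ,\alpha +1$. A secondary point of care is making sure the Gamma-ratio prefactor is genuinely bounded above uniformly in $\n $ (not just asymptotically), which is immediate from $\Gamma (\n +1)/\Gamma (\n +\alpha ) \le \cref{;72}\,\n ^{1-\alpha }$ for $\n \ge 1$ when $\alpha > -1$. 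Once \lref{l:71} is in place, \lref{l:50} itself follows by inserting Hilb-type asymptotics for $L^{[\alpha ]}_{\n -1}$ and $L^{[\alpha +1]}_{\n -1}$, $L^{[\alpha +1]}_{\n -2}$ (from \cite[Theorem 8.22.4]{szego}) into $\Ma $ and bounding it uniformly on the range $1\le y\le 4\n \omega $, which is the content deferred to \sref{s:7}.
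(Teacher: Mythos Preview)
Your proposal is correct and follows essentially the same route as the paper: start from $\rbNone (y)=\frac{1}{4\n }\kan (\frac{y}{4\n },\frac{y}{4\n })$, take the confluent limit in \eqref{:51Zz} to get the Wronskian, convert derivatives via $\frac{d}{dx}L^{[\alpha ]}_{\n }=-L^{[\alpha +1]}_{\n -1}$, factor out $\w ^{1/2}$ and $\mathsf{w}_{\alpha +1}^{1/2}$ to recognize $\Ma $ with a leftover $x^{-1/2}$, and then bound $\frac{1}{\sqrt{4\n }}\frac{\Gamma (\n +1)}{\Gamma (\n +\alpha )}\le \cref{;72}\,\n ^{-\alpha +1/2}$. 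The extra contiguous relations you list are harmless but unnecessary; the single derivative identity suffices, and as you note the result is in fact an equality up to the bounded Gamma-ratio constant.
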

\begin{proof}
From \eqref{:51d} and \eqref{:51e}, we see that 
\begin{align}\label{:71b}&
\rbNone (y) = \Kan (y,y) = \frac{1}{4\n  } 
\kan  (\frac{y}{4\n },\frac{y}{4\n }) 
.\end{align}
Hence we will estimate $ \kan (x,x) $. 
Taking $ y \to x $ in \eqref{:51Zz}, we deduce that 
\begin{align}\label{:71c}
\kan (x,x) & 
= \w (x)  \frac{\Gamma (\n +1)} {\Gamma (\n + \alpha )} 
\{ - 
\frac{d L^{[\alpha ]}_{\n } }{dx} L^{[\alpha ]}_{\n -1} 
  +
L^{[\alpha ]}_{\n }  \frac{d L^{[\alpha ]}_{\n -1} }{dx} 
    \, 
   \} (x)  
\\ \notag 
& = 
\w (x)  \frac{\Gamma (\n +1)} {\Gamma (\n + \alpha )} 
\{    L^{[\alpha +1]}_{\n -1}  L^{[\alpha ]}_{\n -1} 
   -  L^{[\alpha ]}_{\n }  L^{[\alpha +1]}_{\n -2}  \, 
   \} (x)  
 \\ \notag & = 
 \frac{\w (x)^{\frac{1}{2}}}{\mathsf{w}_{\alpha +1}(x)^{\frac{1}{2}}}   
  \frac{\Gamma (\n +1)} {\Gamma (\n + \alpha )} 
\{    \mathsf{w} _{\alpha +1}^{\frac{1}{2}}L^{[\alpha +1]}_{\n -1} 
      \w ^{\frac{1}{2}} L^{[\alpha ]}_{\n -1} 
   -  \w ^{\frac{1}{2}}L^{[\alpha ]}_{\n } 
      \mathsf{w} _{\alpha +1}^{\frac{1}{2}} L^{[\alpha +1]}_{\n -2}  \, 
   \} (x)
\\ \notag 
& = 
  \frac{1}{\sqrt{x}}   
  \frac{\Gamma (\n +1)} {\Gamma (\n + \alpha )} 
 \Ma   (x)
.\end{align}
Here we used the formula 
$ \frac{d L^{[\alpha ]}_{\n } }{dx}= -  L^{[\alpha +1]}_{\n -1} $ 
(see \cite[102 p]{szego}) for the second line, and 
\eqref{:71z} for the last line. 
Taking $ x = \frac{y}{4\n }$ in \eqref{:71c}, we obtain that 
\begin{align}\label{:71d}&
\frac{1}{4\n }\kan (\frac{y}{4\n },\frac{y}{4\n })
= 
\frac{1}{\sqrt{4\n y}} 
  \frac{\Gamma (\n +1)} {\Gamma (\n + \alpha )} 
\Ma  (\frac{y}{4\n })
.\end{align}
Clearly, there exists a positive constant $ \cref{;72} $ such that 
\begin{align}\label{:71e}&
\frac{1}{\sqrt{4\n }}   \frac{\Gamma (\n +1)} {\Gamma (\n + \alpha )} 
\le \cref{;72} \n ^{-\alpha + \frac{1}{2}} 
\quad \text{ for all } \n \in \mathbb{N}
.\end{align}
From \eqref{:71d} and \eqref{:71e} we obtain \eqref{:71a}.  
\end{proof}

From \lref{l:71}, our next task is to prove 
\begin{align}\label{:72z}&
 \Ma   (\frac{y}{4\n }) = O (\n^{\alpha -\frac{1}{2}}) 
.\end{align}
Here the bound $ O (\n^{\alpha -\frac{1}{2}})$ is taken 
to be uniform in 
$ \cref{;Hil} \n ^{-1} \le x \le \omega $. 
For this we quote an asymptotic formula of Hilb's type from \cite{szego}. 
\begin{lem}[{\cite[Theorem 8.22.4, 199 p.]{szego}}] \label{l:72}
Let $ -1 < \alpha < \infty  $. 
Let $ \Ct \label{;Hil},\, \omega > 0 $ be fixed. 
Then each Laguerre polynomial $ L^{[\alpha ]}_{\n } $ satisfies, 
for all $ \cref{;Hil} \n ^{-1} \le x \le \omega $, 
\begin{align}\label{:72a}& 
 \w (x)^{\frac{1}{2}} 
 L^{[\alpha ]}_{\n } (x) = 
\Ana  
J_{\alpha } ( \sqrt{4Nx} ) 
+ x^{\frac{5}{4}} O (\n ^{\alpha /2 -\frac{3}{4}})
.\end{align}
Here $ N = \n + (\alpha + 1)/2$ and the bound 
$ O (\n ^{\alpha /2 -\frac{3}{4}})$ holds uniformly in 
$ \cref{;Hil} \n ^{-1} \le x \le \omega $. 
Furthermore, $ \Ana $ is defined by 
\begin{align}\label{:72b}&
 \Ana  = 
 \frac{\Gamma (\n + 1+ \alpha )}{ N ^{\alpha / 2} \Gamma (\n + 1 )}
.\end{align}
\end{lem}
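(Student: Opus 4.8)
The statement is Theorem 8.22.4 of Szegő's monograph \cite{szego}, so in the present paper no independent proof is required; for the reader's convenience the following records the structure of the classical argument. The starting point is the differential equation for $ L^{[\alpha ]}_{\n } $. From the Laguerre equation $ x u'' + (\alpha +1 - x) u' + \n u = 0 $ satisfied by $ u = L^{[\alpha ]}_{\n } $, the substitution $ V(x) = x^{(\alpha +1)/2} e^{-x/2} L^{[\alpha ]}_{\n }(x) $ eliminates the first-order term and brings the equation to Liouville normal form
\begin{align}\notag
V'' + \Big( -\tfrac14 + \frac{N}{x} + \frac{1-\alpha ^2}{4x^2} \Big) V = 0 ,
\qquad N = \n + \tfrac{\alpha +1}{2} .
\end{align}
Since $ \w (x)^{\frac{1}{2}} L^{[\alpha ]}_{\n }(x) = x^{-1/2} V(x) $, an asymptotic expansion for $ V $ on $ \cref{;Hil}\n ^{-1}\le x\le \omega $ is equivalent to the one asserted.

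The plan is to treat this equation as a perturbation of the one obtained by discarding the bounded term $ -\tfrac14 V $, namely $ V_0'' + ( \frac{N}{x} + \frac{1-\alpha ^2}{4x^2} ) V_0 = 0 $, which is a Bessel-type equation with the explicit fundamental system $ \sqrt{x}\, J_{\alpha }(\sqrt{4Nx}) $ and $ \sqrt{x}\, Y_{\alpha }(\sqrt{4Nx}) $, where $ Y_{\alpha } $ is the Bessel function of the second kind (the Wronskian of the two being a nonzero constant). First I would pass to the Volterra integral equation for $ V $ built from this fundamental system and the forcing term $ -\tfrac14 V $, and bound its solution by successive approximation; since the perturbing coefficient is the constant $ -\tfrac14 $ and $ x $ stays bounded, the contraction estimate yields $ V(x) = c_{\n }\sqrt{x}\, J_{\alpha }(\sqrt{4Nx}) + R_{\n }(x) $ with $ x^{-1/2}R_{\n }(x) = x^{5/4}O(\n ^{\alpha /2 - 3/4}) $ uniformly on the stated range. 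The constant $ c_{\n } $ is then pinned down by letting $ x\downarrow 0 $: on one side $ \w (x)^{\frac{1}{2}} L^{[\alpha ]}_{\n }(x)\sim x^{\alpha /2}L^{[\alpha ]}_{\n }(0)=x^{\alpha /2}\,\Gamma(\n +\alpha +1)/(\Gamma(\n +1)\Gamma(\alpha +1)) $, while $ J_{\alpha }(\sqrt{4Nx})\sim N^{\alpha /2}x^{\alpha /2}/\Gamma(\alpha +1) $, so matching forces $ c_{\n }=\Gamma(\n +\alpha +1)/(N^{\alpha /2}\Gamma(\n +1))=\Ana $, which is \eqref{:72b}.

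The only place where real work is concealed is the uniform control of the remainder $ R_{\n } $ across the entire interval $ \cref{;Hil}\n ^{-1}\le x\le\omega $: near the left endpoint the argument $ \sqrt{4Nx} $ is of order one and $ J_{\alpha },Y_{\alpha } $ follow their small-argument expansions, whereas near $ x=\omega $ the argument is of order $ \sqrt{\n } $ and these functions oscillate. The error estimate therefore rests on the uniform bounds $ |J_{\alpha }(z)|,\,|Y_{\alpha }(z)|\le C z^{-1/2} $ for $ z\ge 1 $ together with the behaviour near $ z=0 $, and is carried out exactly as in \cite[\S\,8.21--8.22]{szego}; since we quote the result verbatim, we do not reproduce these computations here, and only the bound \eqref{:72a} in the stated uniform form is used in the sequel (from \lref{l:71} onward).
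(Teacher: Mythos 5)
Your treatment matches the paper's: the paper proves this lemma simply by citing Szeg\H{o}'s Theorem 8.22.4 (using equation (8.22.4) together with the first equation of (8.22.5) there), exactly as you do in quoting the result. Your added sketch of the classical Liouville-transform/Bessel-comparison argument and the matching of the constant $\Ana$ via the $x\downarrow 0$ behaviour is accurate but not required, since the statement is taken from \cite{szego} verbatim.
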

\begin{proof}
This lemma follows from \thetag{8.22.4} and \thetag{8.22.5} in 
Szeg\"{o} \cite[Theorem 8.22.4, 199 p.]{szego} 
immediately. We remark that we use the first equation 
of \thetag{8.22.5} in 
Szeg\"{o} \cite[Theorem 8.22.4, 199 p.]{szego} as well as \thetag{8.22.4} 
in \cite{szego}. 
\end{proof}

\medskip

From \lref{l:72} we see the following.  
\begin{lem} \label{l:73} 
For all $ \n \in \mathbb{N }$ and $ \cref{;Hil} \n ^{-1} \le x \le \omega $ 
\begin{align}\label{:73a}&
 \Ma  (\frac{y}{4\n }) =   O(\n^{\alpha + \frac{1}{2}}) 
   \\ \notag  & \quad 
\cdot 
  \left[   
   \left\{ J_{\alpha }((\frac{ N -1}{\n }y)^{\frac{1}{2}}) + 
(\frac{y}{4\n })^{\frac{5}{4}} O(\frac{1}{\n ^{\frac{3}{4}}}) \right\}
 \left\{ J_{\alpha +1}((\frac{ N -1}{\n }y)^{\frac{1}{2}}) + 
(\frac{y}{4\n })^{\frac{5}{4}} O(\frac{1}{\n ^{\frac{3}{4}}}) \right\}
 \right.
\\ \notag & \quad \quad  \left.
- 
 \left\{ J_{\alpha +1}((\frac{ N -2}{\n }y)^{\frac{1}{2}}) + 
(\frac{y}{4\n })^{\frac{5}{4}} O(\frac{1}{\n ^{\frac{3}{4}}}) 
\right\}
\left \{ J_{\alpha }((\frac{ N }{\n }y)^{\frac{1}{2}}) + 
(\frac{y}{4\n })^{\frac{5}{4}} O(\frac{1}{\n ^{\frac{3}{4}}}) 
\right\} 
   \right] 
.\end{align}
\end{lem}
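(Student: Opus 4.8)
The goal of \lref{l:73} is to insert the Hilb-type asymptotic expansion \eqref{:72a} of \lref{l:72} into each of the four Laguerre-polynomial factors appearing in the definition \eqref{:71z} of $ \Ma $. The plan is a direct substitution followed by bookkeeping of the various Gamma-function prefactors.

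\medskip

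\textbf{Step 1: Rewrite $ \Ma $ in terms of the normalized products $ \w ^{1/2} L^{[\alpha ]}_{\n } $.} Recall from \eqref{:71z} that, with $ x = y/(4\n )$,
\begin{align*}&
 \Ma   (x) =
\{   \mathsf{w} _{\alpha +1}^{\frac{1}{2}}
L^{[\alpha +1]}_{\n -1} \,
      \w ^{\frac{1}{2}} L^{[\alpha ]}_{\n -1}
   -  \w ^{\frac{1}{2}}L^{[\alpha ]}_{\n } \,
   \mathsf{w} _{\alpha +1}^{\frac{1}{2}}
    L^{[\alpha +1]}_{\n -2}  \,
   \} (x)
.\end{align*}
Each of the four factors $ \w ^{1/2} L^{[\alpha ]}_{\n -1} $, $ \w ^{1/2} L^{[\alpha ]}_{\n } $, $ \mathsf{w}_{\alpha +1}^{1/2} L^{[\alpha +1]}_{\n -1} $, $ \mathsf{w}_{\alpha +1}^{1/2} L^{[\alpha +1]}_{\n -2} $ is of exactly the type treated in \lref{l:72}, applied with the appropriate parameters: for $ \w ^{1/2}L^{[\alpha ]}_{\n } $ one uses order $ \alpha $ and degree $ \n $, so $ N = \n +(\alpha +1)/2 $ and the Bessel argument is $ \sqrt{4Nx} = (\frac{N}{\n } y)^{1/2}$; for $ \w ^{1/2}L^{[\alpha ]}_{\n -1} $ one uses order $ \alpha $ and degree $ \n -1 $, giving the shifted index $ N-1 $ and argument $ (\frac{N-1}{\n } y)^{1/2} $; and similarly $ \mathsf{w}_{\alpha +1}^{1/2}L^{[\alpha +1]}_{\n -1} $ uses order $ \alpha +1 $, degree $ \n -1 $, which again leads to index $ N-1 $ (since $ (\n -1)+(\alpha +2)/2 = N-1 $) and Bessel order $ \alpha +1 $, while $ \mathsf{w}_{\alpha +1}^{1/2}L^{[\alpha +1]}_{\n -2} $ gives index $ N-2 $ and order $ \alpha +1 $. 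This matches precisely the four Bessel arguments $ (\frac{N-1}{\n } y)^{1/2} $, $ (\frac{N-1}{\n } y)^{1/2} $, $ (\frac{N-2}{\n } y)^{1/2} $, $ (\frac{N}{\n } y)^{1/2} $ appearing in \eqref{:73a}.

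\medskip

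\textbf{Step 2: Track the constants $ \Ana $.} By \eqref{:72a}, each factor contributes a leading constant of the form $ A_{\n ,\beta } = \Gamma(\n +1+\beta )/(N_\beta ^{\beta /2}\Gamma(\n +1))$ for the relevant order $ \beta $ and shifted degree. Multiplying the two constants in the first product and the two in the second, and using the standard estimate $ \Gamma(\n +c)/\Gamma(\n +c') \sim \n ^{c-c'} $ together with $ N \sim \n $, one checks that both products of constants are $ O(\n ^{\alpha +\frac12}) $ with the \emph{same} leading coefficient; this common factor can then be pulled out front as the $ O(\n ^{\alpha +\frac12}) $ in \eqref{:73a}. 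The error terms $ x^{5/4} O(\n ^{\alpha /2-3/4}) $ from \eqref{:72a}, after dividing by the leading constant $ A_{\n ,\alpha } = O(\n ^{\alpha /2}) $ (resp. $ A_{\n ,\alpha +1} = O(\n ^{(\alpha +1)/2}) $), become $ (\frac{y}{4\n })^{5/4} O(\n ^{-3/4}) $ uniformly on $ \cref{;Hil}\n ^{-1}\le x\le \omega $, which is exactly the error displayed inside each brace in \eqref{:73a}. Substituting these expansions into the bracketed expression of Step 1 and factoring out the common leading constant yields \eqref{:73a}.

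\medskip

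\textbf{Main obstacle.} The only delicate point is Step 2: one must verify that the two products of Gamma-prefactors — one coming from $ \mathsf{w}_{\alpha +1}^{1/2}L^{[\alpha +1]}_{\n -1}\cdot \w ^{1/2}L^{[\alpha ]}_{\n -1} $ and the other from $ \w ^{1/2}L^{[\alpha ]}_{\n }\cdot \mathsf{w}_{\alpha +1}^{1/2}L^{[\alpha +1]}_{\n -2} $ — have the same asymptotic order $ \n ^{\alpha +\frac12} $ (and, implicitly, that factoring out this common order is legitimate so that the whole right-hand side can be written with a single $ O(\n ^{\alpha +\frac12}) $ in front). This is a routine but slightly fiddly computation with ratios of Gamma functions and with the three nearby values $ N, N-1, N-2 $ of the shifted degree; since $ N = \n + (\alpha +1)/2 \to \infty $ these shifts are negligible at leading order, so the verification goes through, but it must be done carefully to make sure no spurious factor of a different power of $ \n $ survives.
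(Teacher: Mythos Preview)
Your proposal is correct and follows essentially the same route as the paper: factor each of the four terms $\w^{1/2}L^{[\alpha]}_{\n}$, $\w^{1/2}L^{[\alpha]}_{\n-1}$, $\mathsf{w}_{\alpha+1}^{1/2}L^{[\alpha+1]}_{\n-1}$, $\mathsf{w}_{\alpha+1}^{1/2}L^{[\alpha+1]}_{\n-2}$ via the Hilb asymptotic \eqref{:72a}, absorb the prefactor $A_{\n,\beta}=O(\n^{\beta/2})$ into a single $O(\n^{\alpha+1/2})$ out front, and collect the remaining $\{J_{\beta}+ (y/4\n)^{5/4}O(\n^{-3/4})\}$ braces. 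The paper's proof is just the terse version of your Steps~1--2 (equations \eqref{:91Q}--\eqref{:91R}), so your added bookkeeping on the Gamma-ratios is exactly the content hidden behind the word ``immediately'' there; note only that your identity $(\n-1)+(\alpha+2)/2=N-1$ is off by $1/2$, but this slip is harmless since a shift of $O(1/\n)$ in the Bessel argument is absorbed in the error braces anyway.
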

\begin{proof} 

From \eqref{:72a} we easily deduce that, 
for all $ \cref{;Hil}\n ^{-1} \le x \le \omega $, 
\begin{align}\label{:91Q}
 \w (x)^{\frac{1}{2}}  L^{[\alpha ]}_{\n } (x) 
 & = 
\Ana  
\left\{ 
J_{\alpha } ( \sqrt{4Nx} ) 
+ x^{\frac{5}{4}} O(\frac{1}{\n ^{\frac{3}{4}}})
\right\}
.\end{align}
Then taking $ x = y /4\n $ in \eqref{:91Q} we deduce that  
\begin{align} \label{:91q} 
\w (\frac{y}{4\n })^{\frac{1}{2}} 
 & L^{[\alpha ]}_{\n } (\frac{y}{4\n }) 
 =  
 \Ana 
 \left\{ J_{\alpha }((\frac{ N }{\n }y)^{\frac{1}{2}}) + 
(\frac{y}{4\n })^{\frac{5}{4}} O(\frac{1}{\n ^{\frac{3}{4}}}) 
\right\}
.\end{align}
A simple calculation shows that 
\begin{align}\label{:91R}&
\Ana = O(\n^{\frac{1}{2}\alpha }) 
\end{align}
Hence we deduce \eqref{:73a} from \eqref{:71z}, \eqref{:91q} 
and \eqref{:91R} immediately. 
\end{proof}

\noindent 
{\em Proof of \lref{l:50}. } 
We will calculate the right-hand side of \eqref{:73a}.

Recall that $ N = \n + \frac{1}{2} (\alpha + 1) $. Then 
$ \frac{N}{\n }- \frac{N-1}{\n }= \frac{1}{\n } $. 
Hence from Taylor expansion of 
$ J_{\alpha }(\sqrt{y}) $ 
 and 
$ J_{\alpha +1}(\sqrt{y}) $, 
 we deduce that 
\begin{align}\label{:991}&
 J_{\alpha }((\frac{ N -1}{\n }y)^{\frac{1}{2}}) 
 J_{\alpha +1}((\frac{ N -1}{\n }y)^{\frac{1}{2}}) 
- 
J_{\alpha }((\frac{ N }{\n }y)^{\frac{1}{2}} ) 
J_{\alpha +1}((\frac{ N -2}{\n }y)^{\frac{1}{2}} ) 
\\ \notag & = 
\frac{1}{\n } \left[
 J_{\alpha }((\frac{ N -1}{\n }y)^{\frac{1}{2}} ) 
\, \n 
\left\{ 
 J_{\alpha +1}((\frac{ N -1}{\n }y)^{\frac{1}{2}} ) 
- 
 J_{\alpha +1}((\frac{ N -2}{\n }y)^{\frac{1}{2}}) \right\}
  \right.
  \\ 
&\left. \quad \quad \quad - 
\n \left\{
J_{\alpha }((\frac{ N }{\n }y)^{\frac{1}{2}} ) 
- 
 J_{\alpha }((\frac{ N -1}{\n }y)^{\frac{1}{2}}) \right\}
 J_{\alpha +1}((\frac{ N -2}{\n }y)^{\frac{1}{2}}) 
 \right]
\\ \notag 
& = O(\frac{1}{\n }) \left[
\{ \frac{d}{dy } 
  J_{\alpha }(\sqrt{y})  \} \cdot  J_{\alpha +1}(\sqrt{y}) 
 - 
J_{\alpha }(\sqrt{y}) 
\{ \frac{d}{dy }  J_{\alpha +1}(\sqrt{y}) \}  \right]
\\ \notag 
& =  O (\frac{1}{\n } )
.\end{align}
We also see that 
\begin{align}\label{:992}&
J_{\alpha }((\frac{ N }{\n }y)^{\frac{1}{2}}) 
 (\frac{y}{4\n })^{\frac{5}{4}} O(\frac{1}{\n ^{\frac{3}{4}}}) 
 = 
O (1) \frac{1}{y^{\frac{1}{4}}}
 (\frac{y}{4\n })^{\frac{5}{4}} 
  O(\frac{1}{\n ^{\frac{3}{4}}}) 
 = 
O (\frac{1}{\n } )
.\end{align}
Here we used $ |J_{\alpha } (t)| \le O(1) t^{-1/2}$ and 
$ \frac{y}{4\n } = O(1)$. 
Substituting \eqref{:991} and \eqref{:992} 
together with similar relations into \eqref{:73a}, we deduce that  
\begin{align}\label{:993}
 \Ma (\frac{y}{4\n })
 =  & O (\n ^{\alpha + \frac{1}{2}})  O (\frac{1}{\n}) 
 = O (\n ^{\alpha - \frac{1}{2}}) 
.\end{align}
This together with \lref{l:71} completes the proof of \lref{l:50}. 
\bbbbb

\section{Appendix 2: No particles hit the origin. } \label{s:8}
In this setion we prove that no partciles hit the origin if $ 1 \le \alpha $. 
For this it is enough to prove that the capacity of the set 
being at least one particle at the origin is zero. 
\begin{lem} \label{l:81} 
Let $ \mathsf{A} = \{ \mathsf{s}\in\mathsf{S};\, \mathsf{s}(\{ 0 \} ) \ge 1 \} $. 
Suppose $ \alpha \ge 1 $. Then 
\begin{align}\label{:81a}&
\mathrm{Cap}^{\mu }(\mathsf{A}) = 0
.\end{align}
Here $ \mathrm{Cap}^{\mu }$ is the capacity associated with the Dirichlet space 
$(\Eazero ,\d ^{\mu } ,\Lm )$ as before. 
\end{lem}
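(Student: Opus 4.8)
The plan is to estimate the capacity of $\mathsf{A}$ directly by constructing explicit cut-off functions that are $1$ on $\mathsf{A}$ and whose Dirichlet energy tends to $0$. Recall that $\mathrm{Cap}^{\mu}(\mathsf{A}) = \inf \{ \Eazero(f,f) + \|f\|_{L^2(\mu)}^2 \}$, where the infimum runs over $f \in \d^{\mu}$ (or its closure) with $f \ge 1$ $\mu$-q.e.\ on a neighbourhood of $\mathsf{A}$. Since $\mathsf{A}$ is the event that some particle sits exactly at the origin, it suffices to control, for each fixed $\varepsilon > 0$, the event $\mathsf{A}_\varepsilon = \{ \mathsf{s};\, \mathsf{s}(\Ss[\varepsilon]) \ge 1 \text{ with a particle near } 0\}$ and then let $\varepsilon \downarrow 0$; more precisely, I would work with a one-particle test function of the form $f_\delta(\mathsf{s}) = \max_i \chi_\delta(s_i)$ where $\chi_\delta$ is a $[0,1]$-valued smooth function on $\SS = \ZI$ equal to $1$ near $0$ and supported in $[0,\delta]$. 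Because $f_\delta$ is local and smooth in the sense of \sref{s:3}, it lies in $\di$, and its square field is $\DDD[f_\delta,f_\delta](\mathsf{s}) \le \tfrac12 \sum_i |\chi_\delta'(s_i)|^2 \mathbf{1}(s_i \le \delta)$.

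The key computation is then to bound $\Eazero(f_\delta,f_\delta) = \int_{\SSS} \DDD[f_\delta,f_\delta]\, d\mub$ using the one-point correlation function: by the definition of correlation functions \eqref{:20c},
\begin{align*}
\Eazero(f_\delta, f_\delta) \le \tfrac12 \int_{0}^{\delta} |\chi_\delta'(x)|^2 \rho_\alpha^1(x)\, dx .
\end{align*}
So everything reduces to the local behaviour of $\rho_\alpha^1$ near the origin. From the Bessel kernel \eqref{:10bb} and the small-argument asymptotics $J_\alpha(\sqrt{x}) \sim c_\alpha x^{\alpha/2}$, one reads off $\Ka(x,x) = \rho_\alpha^1(x) = O(x^{\alpha})$ as $x \downarrow 0$. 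Hence with the standard logarithmic cut-off $\chi_\delta$ — take $\chi_\delta(x) = 1$ on $[0,\delta^2]$, $\chi_\delta(x) = \log(\delta/\sqrt{x})/\log(1/\sqrt{\delta})$ on $[\delta^2,\delta]$, and $0$ beyond — one has $|\chi_\delta'(x)|^2 \le C/(x^2 (\log(1/\delta))^2)$ on the transition region, so
\begin{align*}
\int_{\delta^2}^{\delta} |\chi_\delta'(x)|^2 \rho_\alpha^1(x)\, dx \le \frac{C}{(\log(1/\delta))^2} \int_{\delta^2}^{\delta} x^{\alpha - 2}\, dx .
\end{align*}
When $\alpha \ge 1$, the integral $\int_{\delta^2}^\delta x^{\alpha-2} dx$ is bounded (for $\alpha > 1$) or is $O(\log(1/\delta))$ (for $\alpha = 1$), and in either case the prefactor $(\log(1/\delta))^{-2}$ forces the whole expression to $0$ as $\delta \downarrow 0$. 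Simultaneously $\|f_\delta\|_{L^2(\mub)}^2 \le \int_0^\delta \rho_\alpha^1(x)\,dx = O(\delta^{\alpha+1}) \to 0$. Since $f_\delta \ge 1$ on a neighbourhood of $\mathsf{A}$, this yields $\mathrm{Cap}^{\mu}(\mathsf{A}) = 0$.

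The main obstacle is the bookkeeping needed to make the test function genuinely admissible: $f_\delta$ as written is only Lipschitz (because of the $\max$ over particles), so I would either smooth the $\max$ slightly or, more cleanly, use the $H$-function / capacitary comparison already available in the Dirichlet-form framework (the same device used to prove non-collision estimates in \cite{o.col}), replacing $\max_i \chi_\delta(s_i)$ by $1 - \prod_i (1 - \chi_\delta(s_i))$, which is smooth and local and whose square field is still controlled by $\sum_i |\chi_\delta'(s_i)|^2$ on the relevant set after using $\rho_\alpha^1$ and $\rho_\alpha^2$ bounds. The only place where $\alpha \ge 1$ is truly used is the convergence of $\int_{\delta^2}^\delta x^{\alpha-2}\,dx$ against the logarithmic-square denominator; for $-1 < \alpha < 1$ this argument breaks, consistent with Remark~\ref{r:21}\thetag{1} that the leftmost particle then does hit the origin.
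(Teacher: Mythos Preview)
Your argument is correct (the explicit formula you wrote for $\chi_\delta$ has the endpoint values swapped, but the derivative bound $|\chi_\delta'(x)|^2 \le C/(x^2(\log(1/\delta))^2)$ that you actually use is right, and smoothing the corners of $\chi_\delta$ is routine). It is, however, a genuinely different route from the paper's.

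The paper does not build a global test function on $\SSS$. Instead it first dominates $\mathrm{Cap}^{\mu}$ by the capacity $\mathrm{Cap}^{\mu}_r$ of the \emph{localized} Dirichlet form obtained by restricting to $\sigma[\pi_r]$-measurable functions, then decomposes $\mathsf{A}$ according to the number $m$ of particles in $S_{b_r}$, reducing via subadditivity to the finite-dimensional statements $\mathrm{Cap}^{\mu}_r(\mathsf{A}_r^m)=0$. Each of these is proved by bounding the symmetric $m$-particle density $m_r^m$ on $S_r^m$ by $c\prod_i x_i^{\alpha}$ (using the determinantal inequality $\rho_\alpha^m \le m!\prod_i \rho_\alpha^1$ together with $\Ka(x,x)=O(x^\alpha)$), and then invoking the classical fact that $\{x_i=0\}$ is polar for the Dirichlet form with reference measure $x^\alpha\,dx$ when $\alpha\ge 1$. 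Your approach is more direct and more elementary: it stays on the infinite configuration space, uses only the one-point asymptotic $\rho_\alpha^1(x)=O(x^\alpha)$, and runs the standard logarithmic cut-off argument for polarity of a point in a Bessel-type process of dimension $\ge 2$. The paper's decomposition makes the analogy with the finite-dimensional Bessel process literal and modular, but at the cost of the extra localization step and the multipoint bound $\rho_\alpha^m \le m!\prod \rho_\alpha^1$, neither of which your argument needs.
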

\begin{proof}
Set 
$ \mathcal{D}_{\circ ,r} = \{ f\in \di ; \text{$ f $ is $ \sigma [\pi _r]$-measurable} \} $, 
where $ \pi _r (\mathsf{s})= \mathsf{s} (\cdot \cap \SS _{b_r})$ and $ b_r $ are 
 as in \dref{dfn:2}. 
Let $ \d _r^{\mu } $ be the closure of $\mathcal{D}_{\circ ,r} $  
with respect to $ (\Eazero ,\d ^{\mu } ) $ on $ \Lm $. It is then clear that 
\begin{align}\label{:82a}&
\d _r^{\mu } \subset \d ^{\mu }
.\end{align}
We can regard $ (\Eazero ,\d _r^{\mu } ) $ on $ \Lm $ 
as a quasi-regular Dirichlet form on $ L^2(\mathsf{\SS} _r, \mu _r)$, 
where $ \mathsf{\SS} _r$ is the configuration space over $ \SS _{b_r }$, and 
$ \mu _r = \mu \circ \pi _r^{-1}$ is regarded as a random point field on $ \SS _{b_r }$. 

Let $ \mathrm{Cap}_r^{\mu }$ be the capacity associated with the Dirichlet form 
$ (\Eazero ,\d _r^{\mu } ) $ on $ L^2(\mathsf{\SS} _r, \mu _r)$. 
We then obtain from \eqref{:82a} that for any $ r \in \N $ 
\begin{align}\label{:81b}&
\mathrm{Cap}^{\mu } (\mathsf{A})\le \mathrm{Cap} _r^{\mu }(\mathsf{A})
.\end{align}
Fix $ r \in \N $ and set $ \mathsf{A}_r^m = \mathsf{A}\cap \{ \mathsf{s}(\Sr )=m \} $. 
We then see that 
$ \mathsf{A} = \sum_{m=1}^{\infty}\mathsf{A}_r^m $. 
Hence we deduce from \eqref{:81b} and  the sub-additivity of capacity that 
\begin{align}\label{:81c}&
\mathrm{Cap} _r^{\mu }(\mathsf{A} ) 
\le \sum_{m=1}^{\infty} \mathsf{Cap} _r^{\mu } (\mathsf{A}_r^m )
.\end{align}
Taking \eqref{:81b} and \eqref{:81c} into account, we deduce \eqref{:81a} from 
\begin{align}\label{:81d}&
\mathrm{Cap} _r^{\mu }(\mathsf{A}_r^m ) = 0 
\quad \text{ for all } m \in \N 
.\end{align}

Let $ \mr ^m $ be the symmetric labeled density of $ \mu \circ \pi _r^{-1}$ on $ \Sr ^m $. Then 
\begin{align}\label{:81e}&
\mr ^m (x_1,\ldots,x_m) \le \rbm (x_1,\ldots,x_m) \quad \text{ on } 
\Sr ^m 
.\end{align}
Since $ \mu $ is a determinantal random point field with kernel \eqref{:10b}, 
we see that 
\begin{align}\label{:81f}&
|\Ka (x_i,x_j)| \le \{\Ka (x_i,x_i) \Ka (x_j,x_j)\}^{1/2}= 
\{\rbone (x_i) \rbone (x_j)\}^{1/2}
.\end{align}
We then obtain from \eqref{:10a} and \eqref{:81f} that 
\begin{align}\label{:81g}&
\rbm (x_1,\ldots,x_m)  \le m! \prod_{i=1}^m \rbone (x_i) 
.\end{align}
From \eqref{:81e} and \eqref{:81g} combined with the expression \eqref{:10bb}, 
there exists a positive constant $ \Ct \label{;81g}$ depending on $ r,m \in \N $ 
such that 
\begin{align}\label{:81h}&
\mr ^m (x_1,\ldots,x_m) \le \cref{;81g} \prod_{i=1}^m x_i^{\alpha } 
\quad \text{ for all } (x_1,\ldots,x_m)\in \Sr ^m 
.\end{align}
From \eqref{:81h} and a direct calculation using $ 1 \le \alpha $, 
we obtain \eqref{:81d}. This completes the proof of \eqref{:81a}.    
\end{proof}






\small{

}

%
%








\end{document}